\documentclass{article}

\setlength{\oddsidemargin}{-1cm}
\setlength{\textwidth}{16cm}
\setlength{\textheight}{23cm}
\setlength{\topmargin}{-2cm}
\usepackage{titling}
\setlength{\droptitle}{-1in}

\usepackage{graphicx}
\usepackage{amsfonts}
\usepackage{amsmath}
\usepackage{amssymb}
\usepackage{amsthm}
\usepackage{algorithm}
\usepackage{color}
\usepackage{hyperref}
\usepackage{cite}
\usepackage{authblk}

\newcommand{\I}{\mathrm{i}\,}
\newtheorem{theorem}{Theorem}

\newtheorem{lemma}{Lemma}
\theoremstyle{remark}
\newtheorem{remark}{Remark}

\newcommand{\HZ}[1]{\textcolor{black}{#1}}
\newcommand{\MG}[1]{\textcolor{black}{#1}} 

\title{Fourier Analysis of Finite Difference Schemes for the Helmholtz Equation in 1D with Dirichlet
  Conditions: Sharp Estimates and Relative Errors\thanks{2020 Mathematics Subject Classification:
    65N06, 65N12, 65N15}}

\author{Martin J. Gander\thanks{Department of Mathematics, University of Geneva, Rue du
    Conseil-Général 9, CP 64, 1211 Genève 4, Switzerland, martin.gander@unige.ch}\quad Hui
  Zhang\thanks{Corresponding author, School of Mathematics \& Physics, Xi'an Jiaotong-Liverpool
    University, Ren'ai Road 111, 215123 Suzhou, China, hui.zhang@xjtlu.edu.cn} \quad Haiyang
  Zhou\thanks{Affiliated with the same as Hui Zhang, and also with Department of Mathematical
    Sciences, University of Liverpool, L69 3BX, Liverpool, UK, haiyang.zhou19@student.xjtlu.edu.cn}}

\date{\today}

\begin{document}
\maketitle

\begin{abstract}
  \HZ{We consider the Dirichlet problem of the indefinite Helmholtz equation in 1D, $u''+k^2u=f$ in
    $(0,1)$, $u(0)=g_0$, $u(1)=g_1$, with a constant wavenumber
    $k\in(0,\infty)\backslash\pi\mathbb{N}$ and a source term $f\in H^p_0(0,1)$, $p\ge 4$.}  We
  propose an approach based on Fourier analysis to \MG{derive} wavenumber explicit sharp estimat\MG{es} of absolute
  and relative errors of \emph{finite difference} methods. \HZ{Such results have been well known for
    \emph{finite element} methods (FEM).} We use the approach to analyze the classical \MG{centered}
  \HZ{finite difference} scheme. For the Fourier interpolants of the discrete solution with
  homogeneous (or inhomogeneous) Dirichlet conditions, we show rigorously, \HZ{under the \MG{two}
    assumptions $k>20$ and $k(kh)^2/\sigma_k\le4/(\pi-2)$ with
    $\sigma_k:=\operatorname{dist}(k,\pi\mathbb{N})$,} that the worst case attainable convergence
  order of the absolute error \HZ{with $\sum_{p=0}^4k^{-p}\|f^{(p)}\|_{L^2}=O(1)$ (or
    $|g_i|\asymp k^{-1}$)} is $\HZ{(kh)^2/\sigma_k^2}$ in the $L^2$-norm and
  $\HZ{k(kh)^2/\sigma_k^2}$ in the $H^1$-semi-norm, and that of the relative error is
  $\HZ{k(kh)^2/\sigma_k}$ in both $L^2$- and $H^1$-semi-norms \HZ{if
    $\|u^{(p)}\|_{L^2}/\|u^{(p-2)}\|_{L^2}\asymp k^2$ for $p=2,3$}. \HZ{In particular, the lower
    bounds of these error estimates are established rigorously in the same orders as the upper
    bounds, which is the main novelty of this work.} We show also that the Fourier analysis approach
  can be used as a convenient visual tool for evaluating finite difference schemes in presence of
  source terms, which is beyond the scope of dispersion analysis. \HZ{The results from the theory
    and visual analysis are corroborated by numerical experiments.}
\end{abstract}

{\bf Key words: }finite difference, Helmholtz equation, error estimate, wavenumber, Fourier analysis

\section{Introduction}

The Helmholtz equation $\Delta u + k^2u=f$ is a common model of time-harmonic waves in acoustics
\cite{ihlenburg1998}, geophysics\cite{santos2016} and electrical engineering\cite{ammari2018},
etc. Standard discretization of the Helmholtz equation suffers from the so-called ``\emph{pollution
  effect}'', explained by \cite{deraemaeker1999} as \HZ{that ``the rule of the thumb taking a
  certain number of elements per wavelength'' is insufficient to ``keep the total error under
  control'' albeit ``sufficient to keep the interpolation error constant''.} It is proved in
\cite{ihlenburg1995} that the linear finite element method for the 1D \HZ{scattering} problem on
$(0,1)$ with $u(0)=0$ and $u'(1)-\mathrm{i}ku(1)=0$ has the error in the $H^1$-norm bounded from
above by $C(hk+h^2k^3)\|f\|_{L^2}$ with $C>0$ independent of $h$, $k$ and $f$, where
$Ch^2k^3\|f\|_{L^2}$ is called the ``\emph{pollution term}'' which indicates the accuracy
deterioration with the ``rule of thumb'' keeping $kh$ constant. Since then, a lot of progress has
been made in wavenumber explicit error estimates of finite element methods \cite{melenk2011,
  wu2014pre, zhu2013, du2015, li2019fem, spence2023}, including the recent extension to
heterogeneous media \cite{barucq2017, chaumont2020, graham2020, lafontaine22, CFS25} and multiscale
methods \cite{peterseim2017, freese2021, hauck2022, chaumont2022, ma2023, CFV20}.

While finite element methods are more advanced and powerful, finite difference methods are still
used for their simplicity in some applications \cite{stolk2016, huang2021, aghamiry2022,
  tournier2022, north2023}. In particular, it was found in \cite{tournier2022} for full waveform
inversion that ``\emph{a 27-point stencil on a regular Cartesian grid}'' is ``\emph{more
  computationally efficient}'' than ``\emph{a $P_3$ finite element method on $h$-adaptive
  tetrahedral mesh}''. It is pressing to have a theoretical support for methods in use. But
wavenumber explicit error estimates are rarely seen in the literature of finite difference methods
\cite{singer1998, turkel2013, feng2023, wang2023, wu2024phase}. Error estimates of some compact
fourth order schemes in 2D are given in \cite{fu2008compact} based on the assumption that the
coefficient matrix of the discretized system is positive definite (which is true only for small
wavenumber $k$). In \cite{wang2014}, arbitrarily high order and dispersion free schemes in 1D are
derived, and an error analysis following the approach of \cite{fu2008compact} is given. Recently, in
\cite{cocquet2024} it is shown for a dispersion free 3-point scheme in 1D that the maximum-norm
error is bounded from above by $(\frac{1}{k}\|f''\|_{L^\infty} + k^2\|f\|_{L^\infty})h^2$ up to a
constant factor. {An earlier dispersion free scheme in 1D was proposed in the seminal paper
  \cite{babuska1997} and analyzed in the finite element framework giving the $H^1$-semi-norm error
  less than $ch\|f\|_{H^1}$ for the scattering problem with $c$ seemingly $O(k)$.}

It has been well noted that the pollution effect is intuitively related to the fact that the
numerical solution ``\emph{has a wavelength that is different from the exact one}'', called
``\emph{dispersion}''\cite{deraemaeker1999}. Therefore, \cite{deraemaeker1999} advocated ``\emph{a
  tool in order to measure quantitatively the dispersion for various of the proposed methods as a
  measure of the pollution}''. ``\emph{This measure allows to compare the different methods that
  have been proposed to reduce the pollution and compare their efficiency.}'' The tool called
``\emph{dispersion analysis}'' \cite{abboud1992} had been used before mainly for time-dependent
problems \cite{trefethen1982}.

The idea of dispersion analysis \HZ{in 1D} is to find Fourier modes
\HZ{$\mathrm{e}^{\mathrm{i}{\xi}{x}}$} satisfying the discretized homogeneous Helmholtz equation,
\HZ{e.g.
  $\mathrm{e}^{\mathrm{i}{\xi}{(x-h)}}+(k^2h^2-2)\mathrm{e}^{\mathrm{i}{\xi}{x}}+\mathrm{e}^{\mathrm{i}{\xi}{(x+h)}}=0$,}
and from the resulting constraint on ${\xi}$\HZ{, e.g. $2\cos(\xi h)=2-k^2h^2$}, find the discrete
wavenumber $k^h:=|{\xi}|$. If the original Helmholtz equation
\HZ{$(\mathrm{e}^{\mathrm{i}{\xi}{x}})''+k^2\mathrm{e}^{\mathrm{i}{\xi}{x}}=0$} is used instead of
the discretized one, then we find the constraint $|{\xi}|=k$. Such Fourier modes
\HZ{$\mathrm{e}^{\mathrm{i}{\xi}{x}}$} are called ``plane waves''. The ``phase difference''
$|k-k^h|$ \HZ{measures} the dispersion error between the original plane wave
\HZ{$\mathrm{e}^{\pm\mathrm{i}k{x}}$} and the discretized dispersive ($k^h\ne k$) plane wave
\HZ{$\mathrm{e}^{\pm\mathrm{i}k^h{x}}$}.

It is shown in \cite{ihlenburg1997} for polynomial finite element methods in 1D that the pollution
term is of the same order as the phase difference. That partially explains why reducing the phase
difference leads to reducing the pollution effect. Sharp dispersion error estimates are obtained in
\cite{ainsworth2004}. By providing a practical criterion, dispersion analysis can also guide
optimization of the discretization parameters; see \cite{zhou2023} for the continuous interior
penalty finite element method and \cite{stolk2016, wu2017, cocquet2021, cocquet2024} for finite
difference methods. Recently, dispersion reduction has been generalized in \cite{li2024} to
unstructured meshes by minimizing the residual of the original plane waves in the discretized
equation{; see also the earlier work \cite{quasi2010}}. A similar approach based on minimizing
``\emph{average truncation error of plane waves}'' is adopted in \cite{feng2023} for some high order
finite difference methods.

Albeit intuitive, it is not trivial to translate the ``dispersion error'' into the ``pollution
effect''. As remarked in \cite{ihlenburg1995dispersion}:``\emph{dispersion analysis gives valuable
  information on several physical phenomena inherent to the discrete solution and thus leads to
  qualitative insight into the sources of numerical error},'' but ``\emph{it does not yield, by its
  nature, quantitative statements on the numerical error itself}''. That is why so many efforts have
been made in wavenumber explicit error estimation, as we mentioned above.

Moreover, dispersion analysis is insufficient because it considers only the zero source problem and
the Fourier modes constrained by the dispersion relation, while nonzero source terms and the other
Fourier modes stimulated by the source term are not taken into account. We shall see in
Section~\ref{visual} that three dispersion free schemes in 1D can behave differently with a source
term. Indeed, in \cite{babuska1997} a discretization in 1D was proposed with zero truncation error
``\emph{for as many right hand sides as possible}'', not only the zero right hand side. An
eigenvalue (Fourier) analysis is carried out in \cite{dwarka2021} for the Dirichlet boundary value
problem \emph{with a point source}.

Inspired by \cite{dwarka2021} which calculated the eigenvalue difference of the continuous and
discrete Helmholtz operators, we develop here an approach to wavenumber explicit error estimation
based on Fourier analysis for general smooth source terms, Dirichlet boundary conditions \HZ{and
  non-resonant wavenumber\MG{s} $k\not\in\pi\mathbb{N}$} in 1D. We analyze in detail a classical \MG{centered}
\HZ{\emph{finite difference}} scheme in 1D because it is simple yet not fully
understood.  
Actually, we have not found any other wavenumber explicit estimate for \emph{finite difference}
schemes than that in \cite{cocquet2024}, and in particular none for the classical \MG{centered} \HZ{\emph{finite
    difference}} scheme, though the folklore says a $p$th order \HZ{\emph{finite
    difference}} scheme has the (relative) error $O(k^{p+1}h^p)$ (see
e.g. \cite{dwarka2021}).

\HZ{It should be well noted that such results have been proved and considered as standard for
  \emph{finite element} methods. For example, as pointed out by \MG{an anonymous reviewer}, for
  arbitrary order finite element methods in \MG{a} general domain $\Omega\subset\mathbb{R}^d$,
  combining \cite[Theorem 4.11]{CFS25} with the estimate $\gamma_s^*\le Ck/\delta$ gives a sharp
  upper bound for the Dirichlet problem.}

\HZ{Regarding our work here for the classical \MG{centered} finite difference scheme in 1D,} all the
upper bounds are attainable and the equality cases are given explicitly. The pollution term is
located in Fourier frequencies, and its \emph{exact} order in $k$ and $h$ is found \emph{by
  two-sided bounds}. \HZ{In particular, the lower bounds contribute to the main novelty of this
  work.}
We also give a sharp estimate \MG{for} relative errors with source terms. Wavenumber explicit
estimates \MG{for} relative errors have \HZ{not been studied as much as} \MG{for} absolute
errors. Recent progress for finite element methods can be found in \cite{lafontaine2022sharp,
  galkowski2023}.

Moreover, by plotting the symbol errors we can see clearly how the error is distributed over Fourier
frequencies. In this sense, it becomes a practical tool for comparing different schemes, e.g. those
of the same order and those before and after dispersion correction, which complements the dispersion
analysis for working with or without a source term. This aspect has been briefly demonstrated in
\cite{zhang2025} {for some compact high order schemes, where sharp orders in $k$, $h$ are found
  in Fourier domain and verified numerically}.

In the following sections, we first introduce the model problem and Fourier decomposition in
Section~\ref{model}, then carry out the theoretical analysis of the classical scheme in
Section~\ref{anal}, and illustrate the practical aspect of the tool in
Section~\ref{visual}. \HZ{Those results are corroborated numerically in Section~\ref{numer}.} Some
discussions are included at the end.

\section{Model problem in 1D and downsampling error}
\label{model}

We first introduce the 1D Helmholtz equation in a closed cavity, and the Fourier form of the
solution. Then we analyze the component of the discretization error due to sampling of the solution
on grids.

\subsection{Model problem}

Let $k>0$ be the wavenumber. The Helmholtz equation with Dirichlet boundary conditions in 1D is
\begin{equation}\label{1dhelm}
u''+k^2u=f \text{ in }(0,1),\quad u(0)=g_0,\quad u(1)=g_1.
\end{equation}
For well-posedness of \eqref{1dhelm}, we assume that $k/\pi$ is not an integer.  By linearity, we
separate the two cases:
\begin{itemize}
\item[(i)] homogeneous Dirichlet boundary condition $g_0=g_1=0$, and
\item[(ii)] zero source $f=0$.
\end{itemize}
In case (i) $g_0=g_1=0$, we assume $f\in H_0^{p}(0,1)$ with $p\ge 1$.  \HZ{For $f\in L^2(0,1)$, we
  have $u\in H^2(0,1)\cap H_0^1(0,1)$.}  
For the corresponding theory, we refer to \cite{evans2022}. Smoothness of $f$ \HZ{is a crucial
  assumption and needed in our } analysis of discretization schemes for \eqref{1dhelm}.  For
example, \cite{babuska1997} used the condition $f\in H^1(0,1)$. Here $f\in H_0^p(0,1)$ and
$u\in H^2(0,1)\cap H_0^1(0,1)$ admit odd extension to the domain $(-1,1)$ and the convergent sine
series in the corresponding spaces $H^p(-1,1)$ and $H^2(-1,1)$:
\[
  \begin{aligned}
    f(x)=\sum_{(\xi/\pi)=1}^{\infty}\hat{f}(\xi)\sin(\xi x),\quad
    \hat{f}(\xi) = 2\int_{0}^1f(x)\sin(\xi x)~\mathrm{d}x,\\
    u(x)=\sum_{(\xi/\pi)=1}^{\infty}\hat{u}(\xi)\sin(\xi x),\quad
    \hat{u}(\xi) = 2\int_{0}^1u(x)\sin(\xi x)~\mathrm{d}x.
    \end{aligned}
\]
From \eqref{1dhelm} and the sine series, we have $(k^2-\xi^2)\hat{u}(\xi)=\hat{f}(\xi)$.
Well-posedness of \eqref{1dhelm} amounts to $\lambda(\xi):=k^2-\xi^2\ne0$ for all
$\xi\in\pi\mathbb{N}$. In case (ii) $f=0$, the exact solution is
\begin{equation}\label{udiri}
  u(x)=\frac{g_1-g_0\cos k}{\sin k}\sin(kx)+g_0\cos(kx).
\end{equation}
To analyze the discretization error for the homogeneous problem (ii), one just needs to find the
closed form solution of the discretized problem. Usually, the discretization incurs a perturbation
of the wavenumber $k$ in the solution basis $\{\sin(kx), \cos(kx)\}$, which explains the error, and
requires dispersion analysis.

\subsection{Downsampling error}\label{secdown}

We shall discretize \eqref{1dhelm} on uniform grids with grid points $x_j:=jh$, $j=0,\ldots,N$ where
$Nh=1$. \HZ{Suppose $\mu:=\frac{kh}{2}\le C_{\mu}$ for a constant $C_{\mu}\in(0,\frac{\pi}{2})$.} In
a finite difference scheme, $f$ is simply evaluated at the grid points $x_j$ to give
$f^h(x_j):=f(x_j)$.  This causes aliasing of the sinusoidal modes in case (i), and gives the
discrete sine transform
\[
  \begin{aligned}
  f^h(x_j)=\sum_{(\xi/\pi)=1}^{N-1}\widehat{f^h}(\xi)\sin(\xi x_j),\quad
  \widehat{f^h}(\xi):=2\sum_{j=1}^{N-1}h\sin(\xi x_j)f^h(x_j)=
  \hat{f}(\xi)+\sum_{s=\pm1}s\sum_{m=1}^{\infty}\hat{f}(s\xi+2mN\pi),
\end{aligned}
\]
where the last equality comes from the fact $f^h(x_j)=f(x_j)$ and the rearrangement of
\[
  f(x_j)=\sum_{(\xi/\pi)=1}^{\infty}\hat{f}(\xi)\sin(\xi x_j)=
  \sum_{(\xi/\pi)=1}^{N-1}\left(\hat{f}(\xi)+\sum_{s=\pm1}s\sum_{m=1}^{\infty}\hat{f}(s\xi+2mN\pi)
  \right)\sin(\xi x_j)
\]
permitted by the absolute convergence
\[
  \sum_{(\xi/\pi)=1}^{\infty}|\hat{f}(\xi)|\le \sqrt{\sum_{(\xi/\pi)=1}^{\infty}\frac{1}{\xi^2}}
  \cdot \sqrt{\sum_{(\xi/\pi)=1}^{\infty}|\xi\hat{f}(\xi)|^2}<\infty.
\]
Indeed, it is well known that
$
  \sum_{n=1}^{\infty}\frac{1}{n^2}=\frac{\pi^2}{6},
$
and by Parseval's identity
$
\sum_{(\xi/\pi)=1}^{\infty}|\xi\hat{f}(\xi)|^2=2\int_0^1|f'|^2~\mathrm{d}x.
$

Let \HZ{$u^h(x_j)$} be the finite difference solution defined \HZ{by some scheme,
  e.g. \eqref{1d3pt}}. To calculate the error $e^h:=u-u^h$ for case (i), we \HZ{interpolate
  $u^h(x_j)$} to the domain $\HZ{x\in}(0,1)$ by the discrete sine transform\footnote{{The sine
    interpolation is chosen to facilitate our analysis because the continuous solution $u$ is
    expanded to sine series. In \cite{babuska1997}, piecewise linear interpolation is used for
    analysis of a linear finite element method.}}
\[
  u^h(x)=\sum_{(\xi/\pi)=1}^{N-1}\widehat{u^h}(\xi)\sin(\xi x),\quad x\in (0,1),\quad
  \widehat{u^h}(\xi):=2\sum_{j=1}^{N-1}h\sin(\xi x_j)u^h(x_j).
\]
So the error for case (i) can be expanded \MG{in} the sine basis into
\begin{equation}\label{errpart}
  e^h(x)=\sum_{(\xi/\pi)=1}^{N-1}(\hat{u}-\widehat{u^h})(\xi)\sin(\xi x) +
  \sum_{(\xi/\pi)=N}^{\infty}\hat{u}(\xi)\sin(\xi x)=:e_1^h(x)+e_2^h(x).
\end{equation}
We first estimate the $H^1$-semi-norm of $e_2^h$. Note that
\begin{equation}\label{e2p}
  (e_2^h)'(x)=\sum_{(\xi/\pi)=N}^{\infty}\hat{u}(\xi)\xi\cos(\xi x)=
  \sum_{(\xi/\pi)=N}^{\infty}\frac{\xi}{k^2-\xi^2}\hat{f}(\xi)\cos(\xi x).
\end{equation}
Parseval's identity for the above cosine series reads
\begin{equation}\label{e2pnorm}
  |e_2^h|_1^2:=\int_0^1|(e_2^h)'|^2~\mathrm{d}x=\frac{1}{2}
  \sum_{(\xi/\pi)=N}^{\infty}\frac{\xi^2}{(k^2-\xi^2)^2}|\hat{f}(\xi)|^2.
\end{equation}
By our assumption that $\frac{kh}{2}\le C_{\mu}<\frac{\pi}{2}$, it holds $k<N\pi$, and for
$\xi\ge N\pi$,
\[
  \frac{\xi^2}{(k^2-\xi^2)^2}\le \frac{(N\pi)^2}{((N\pi)^2-k^2)^2}=\frac{\pi^2h^2}{(\pi^2-k^2h^2)^2}
  \le\frac{\pi^2h^2}{(\pi^2-4C_{\mu}^2)^2}.
\]
Hence, we obtain
\[
  |e_2^h|_1^2\le \frac{\pi^2 h^2}{(\pi^2-4C_{\mu}^2)^2}\cdot
  \frac{1}{2}\sum_{(\xi/\pi)=N}^{\infty}|\hat{f}(\xi)|^2.
\]
We \HZ{distinguish} the quantities \HZ{\MG{that can and can not be resolved} on the mesh}, 
\begin{equation}\label{flowhigh}
  f_{\mathrm{low}}:=\sum_{(\xi/\pi)=1}^{N-1}\hat{f}(\xi)\sin(\xi x),\quad
  f_{\mathrm{high}}:=\sum_{(\xi/\pi)=N}^{\infty}\hat{f}(\xi)\sin(\xi x),\quad
  \|f\|:=\int_0^1|f(x)|^2~\mathrm{d}x,
\end{equation}
\HZ{where low and high are w.r.t. $\pi/h$, but not $k$
\MG{(similarly, we also distinguish $u_{\mathrm{low}}$
and $u_{\mathrm{high}}=e_2^h$)}. An ideal mesh is such that the low
  frequencies cover $k$ i.e. $k<\pi/h$, and $f_{\rm{high}}$ is small compared to $f_{\rm{low}}$.}
Then, it follows that
\[
  |e_2^h|_1\le \frac{\pi h}{\pi^2-4C_{\mu}^2}\|f_{\mathrm{high}}\|.
\]
Using higher regularity $f\in H_0^p(0,1)$ {allows the split
  $\frac{\xi^{2}}{(k^2-\xi^2)^2}\frac{1}{\xi^{2p}}|\xi^p\hat{f}(\xi)|^2$ in \eqref{e2pnorm} and}
yields
\begin{equation}\label{downerr}
  |e_2^h|_1\le \frac{\HZ{h(kh)^{p}}}{(\pi^2-4C_{\mu}^2)\pi^{p-1}} \HZ{k^{-p}}|f_{\mathrm{high}}|_p,\quad
  |f|_p:=\|f^{(p)}\|\text{ with $f^{(p)}$ the $p$th derivative},  
\end{equation}
and also the $L^2$-norm of the downsampling error estimate
\begin{equation}\label{downerrL2}
  \|e_2^h\|\le \frac{\HZ{h^2(kh)^{p}}}{(\pi^2-4C_{\mu}^2)\pi^{p}} \HZ{k^{-p}}|f_{\mathrm{high}}|_p.
\end{equation}

\begin{remark}\label{fpartsconv}
  Note that the separation $f=f_{\mathrm{low}}+f_{\mathrm{high}}$ depends on the mesh size
  $h=1/N$. We have $f_{\mathrm{low}}\to f$ and $f_{\mathrm{high}}\to 0$ in $H^p(0,1)$ as
  $N\to \infty$. By Parseval's identity or orthogonality of the basis, we see that
  $|f|_l^2=|f_{\mathrm{low}}|_l^2+|f_{\mathrm{high}}|_l^2$ for all $l=0,..,p$. {If
    $|f_{\mathrm{high}}|_p=O(k^p)$ and $kh=O(1)$, then \eqref{downerr} and \eqref{downerrL2} say
    $|e_2^h|_1=O(h)$ and $\|e_2^h\|=O(h^2)$, so the higher regularity does not improve the
    estimates.}
\end{remark}

The estimation of $e^h_1$ in case (i) and the error in case (ii) both depend on the particular
finite difference schemes, which we shall discuss as follows.

\section{Analysis of classical 3-point centered scheme}
\label{anal}

The scheme uses the approximation $u''(x)\approx (u(x+h)-2u(x)+u(x-h))/h^2$ to find $u^h$ satisfying
\begin{equation}\label{1d3pt}
  (k^2-\frac{2}{h^2})u^h(x_j) + \frac{1}{h^2}u^h(x_{j-1})+\frac{1}{h^2}u^h(x_{j+1})=f(x_j),\quad
  u^h(x_0)=g_0,\quad u^h(x_N)=g_1.
\end{equation}
In case (i), $g_0=g_1=0$, applying the discrete sine transform to the above equation gives
\[
  \left(k^2-\frac{4}{h^2}\sin^2\frac{\xi h}{2}\right)\widehat{u^h}(\xi)=\widehat{f^h}(\xi)
  =\hat{f}(\xi)+\sum_{s=\pm1}s\sum_{m=1}^{\infty}\hat{f}(s\xi+2mN\pi),\;\;\xi/\pi=1,..,N-1.
\]
In case (ii), $f=0$, the solution of \eqref{1d3pt} can be found in closed form,
\begin{equation}\label{uhdiri}
  u^h(x_j) = \frac{g_1-g_0\cos k^h}{\sin k^h}\sin(k^hx_j)+g_0\cos(k^hx_j),\quad x_j=jh,\;j=0,1,..,N,
\end{equation}
where $k^h:=\frac{2}{h}\arcsin\frac{kh}{2}$ is the discrete wavenumber. The \HZ{r.h.s. of
  \eqref{uhdiri}} is valid if and only if $k^h\not\in\mathbb{Z}\pi$ which we assume to hold.
\HZ{The problem \eqref{1d3pt} is well-posed if and only if $kh\in [2,\infty)$ or
  $k^h\not\in\{1,..,N-1\}\pi$. Indeed, \eqref{1d3pt} can be written as an $(N\!-\!1)\times(N\!-\!1)$
  linear system whose coefficient matrix has the eigenvalues
  $k^2\!-\!\frac{4}{h^2}\sin^2\frac{\xi h}{2}$ and the eigenvectors $\sin(\xi x_j)$ for
  $\xi\in\{1,..,N\!-\!1\}\pi$.} The formula \eqref{uhdiri} \MG{permits} the grid function $u^h$ to
be densely defined in $[0,1]$. Then the error $e^h(x):=u(x)-u^h(x)$ can be explicitly written as
\begin{equation}\label{A0A1}
    e^h(x)\!=\!A_0(x)g_0 \!+\! A_1(x)g_1, \quad
    A_1(x)\!=\!\frac{\sin(kx)}{\sin k}\!-\!\frac{\sin (k^hx)}{\sin k^h},\quad
    A_0(x)=A_1(1-x).
\end{equation}
We shall first analyze the case (ii), $f=0$, in subsection~\ref{secdiri}, and then case (i),
$g_0=g_1=0$, in the remaining subsections.

\subsection{Error for the Dirichlet problem with zero source}
\label{secdiri}

From \eqref{A0A1}, we have
\[
  \|e^h\| \le \|A_0\|\cdot |g_0| + \|A_1\|\cdot |g_1| = \|A_1\|(|g_0|+|g_1|).
\]
This inequality is sharp by considering $g_0=0$ or $g_1=0$. So the key is to estimate $\|A_1\|$. We
calculate
\[
  \int_0^1\frac{\sin^2(kx)}{\sin^2k}~\mathrm{d}x=
  \frac{1}{\sin^2k}\int_0^1\frac{1-\cos(2kx)}{2}~\mathrm{d}x=
  \frac{1}{2\sin^2k}\left(1-\frac{1}{2k}\sin(2k)\right),
\]
\[
  \int_0^1\frac{\sin^2(k^hx)}{\sin^2k^h}~\mathrm{d}x=
  \frac{1}{2\sin^2k^h}\left(1-\frac{1}{2k^h}\sin(2k^h)\right),
\]
\[
  \begin{aligned}
  \int_0^1\frac{\sin(kx)\sin(k^hx)}{(\sin k)\sin k^h}~\mathrm{d}x&=
  \frac{1}{(\sin k)\sin k^h}\frac{1}{2}\int_0^1\cos((k-k^h)x)-\cos((k+k^h)x)~\mathrm{d}x\\
  &=\frac{1}{2(\sin k)\sin k^h}\left(\frac{\sin(k^h-k)}{k^h-k}-\frac{\sin(k+k^h)}{k+k^h}\right).
  \end{aligned}
\]
Therefore, we have
\[
  \begin{aligned}
    \|A_1\|^2=&\int_0^1\left[\frac{\sin(kx)}{\sin k}-\frac{\sin(k^hx)}{\sin k^h}\right]^2~\mathrm{d}x\\
    =&\frac{1}{2\sin^2k}+\frac{1}{2\sin^2k^h}-\frac{\cos k}{2k\sin k}-\frac{\cos k^h}{2k^h\sin k^h}
    -\frac{1}{(\sin k)\sin k^h}\left(\frac{\sin(k^h-k)}{k^h-k}-\frac{\sin(k+k^h)}{k+k^h}\right)\\
    =&\frac{1}{(\sin k)\sin k^h}\left[\frac{1}{2}\frac{\sin k^h}{\sin k}\!+\!
    \frac{1}{2}\frac{\sin k}{\sin k^h}\!-\!\frac{(\sin k^h)\cos k}{2k}\!-\!
    \frac{(\sin k)\cos k^h}{2k^h}\!-\!\frac{\sin(k^h\!-\!k)}{k^h\!-\!k}\!+\!
    \frac{\sin(k\!+\!k^h)}{k\!+\!k^h}\right]\\
    =:&\frac{1}{(\sin k)\sin k^h}(S_1\!+\!S_2),
  \end{aligned}
\]
where we separated the terms in the brackets into two parts
\begin{equation}\label{S1S2}
  S_1:=\frac{1}{2}\frac{\sin k^h}{\sin k}+ %
  \frac{1}{2}\frac{\sin k}{\sin k^h}-\frac{\sin(k^h-k)}{k^h-k},\quad
  S_2:=-\frac{(\sin k^h)\cos k}{2k}- \frac{(\sin k)\cos k^h}{2k^h}+\frac{\sin(k+k^h)}{k+k^h},
\end{equation}
{where $S_1$ mainly accounts for the dispersion error, and $S_2$ is small for large $k$}.  Note
that
\[
  \begin{aligned}
    \frac{\sin k^h}{\sin k}&=\frac{\sin(k^h-k+k)}{\sin k}=\frac{\sin(k^h-k)\cos k}{\sin k} +
    \cos(k^h-k),\\
    \frac{\sin k}{\sin k^h}&=\frac{\sin(k-k^h+k^h)}{\sin k^h}=\frac{\sin(k-k^h)\cos k^h}{\sin k^h} +
    \cos(k-k^h).
  \end{aligned}
\]
So we have
\begin{equation}\label{S1}
  S_1=\frac{1}{2}\sin(k^h-k)\left(\cot k-\cot k^h\right) + \cos(k^h-k) - \frac{\sin(k^h-k)}{k^h-k}.
\end{equation}
We have also
\begin{align}
  S_2=&-\frac{(\sin k^h)\cos k}{2k}- \frac{(\sin k)\cos k^h}{2k^h}+\frac{(\sin k)\cos k^h+(\sin k^h)\cos k}{k+k^h}\nonumber\\
  =&\frac{k^h-k}{2(k+k^h)}\left[\frac{(\sin k)\cos k^h}{k^h}-\frac{(\sin k^h)\cos k}{k}\right]\nonumber\\
  =&\frac{k^h-k}{2(k+k^h)}\left[\frac{(\sin k)\cos k^h-(\sin k^h)\cos k}{k} +
     \left(\frac{1}{k^h}-\frac{1}{k}\right)(\sin k)\cos k^h\right]\nonumber\\
  =&\frac{k^h-k}{2(k+k^h)}\left[\frac{\sin(k-k^h)}{k}+\frac{k-k^h}{kk^h}(\sin k)\cos k^h\right].\label{S2}
\end{align}

\begin{lemma}\label{sink}
  Let $\sigma_k:=\min_{\xi\in \pi\mathbb{N}}|k-\xi|>0$. Then $ |\sin k|\HZ{=\sin\sigma_k\in (\frac{2}{\pi},1)\sigma_k}.  $
\end{lemma}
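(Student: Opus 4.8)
The plan is to reduce the claim to the behaviour of $\sin$ on the single interval $[0,\pi/2]$ by exploiting the $\pi$-periodicity of $|\sin|$, and then to conclude with the elementary concavity (Jordan) bound for $\sin$ on that interval. The estimate is essentially a clean one-line fact once the reduction is set up, so I expect no genuine obstacle; the only point that wants a little care is the reduction modulo $\pi$ and checking that the relevant angle indeed lands in $[0,\pi/2]$.

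First I would choose the integer $n$ for which $n\pi$ is the multiple of $\pi$ nearest to $k$, so that $\sigma_k=|k-n\pi|$ realizes the minimum in the definition. Because consecutive points of $\pi\mathbb{N}$ are spaced exactly $\pi$ apart, the distance from $k$ to the nearest one cannot exceed $\pi/2$; together with the standing hypothesis $\sigma_k>0$ (equivalently $k/\pi\notin\mathbb{N}$) this gives $0<\sigma_k\le\pi/2$. I would then use parity and periodicity: writing $k=n\pi+(k-n\pi)$ and $\sin(n\pi+t)=(-1)^n\sin t$, one gets $|\sin k|=|\sin(k-n\pi)|=\sin\sigma_k$, where the last equality holds since $k-n\pi\in[-\pi/2,\pi/2]$ and $\sin$ is odd, so $|\sin(\pm\sigma_k)|=\sin\sigma_k$ for $\sigma_k\in[0,\pi/2]$.

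Finally I would invoke the inequality $\sin t\ge \frac{2}{\pi}t$ for $t\in[0,\pi/2]$. This is immediate from the concavity of $\sin$ on $[0,\pi/2]$: the graph of $\sin$ lies above the chord joining $(0,0)$ and $(\pi/2,1)$, whose equation is $t\mapsto\frac{2}{\pi}t$. Taking $t=\sigma_k$ then yields $|\sin k|=\sin\sigma_k\ge\frac{2}{\pi}\sigma_k$, which is exactly the assertion. (Here I read $\pi\mathbb{N}$ as including $0$, so that small $k>0$ is covered automatically; the same concavity argument on $[0,\pi/2]$ handles every case uniformly once $\sigma_k\le\pi/2$ is established.)
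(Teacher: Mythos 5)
Your proof is correct and follows essentially the same route as the paper's: reduce modulo $\pi$ to the nearest multiple $n\pi$, note that the residual angle lies in $[-\pi/2,\pi/2]$, and apply the concavity bound $\sin t\ge\frac{2}{\pi}t$ on $[0,\pi/2]$. Your explicit remark that $\pi\mathbb{N}$ must be read as containing $0$ (or $k$ taken not too small) for the nearest multiple to be within $\pi/2$ is a sensible clarification, but it does not change the argument.
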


\begin{proof}
  Let $n_k\in \mathbb{N}$ be such that $|k-n_k\pi|\le \frac{\pi}{2}$. Since
  $\sin x\ge \frac{2}{\pi}x$ for $0\le x\le \frac{\pi}{2}$, we have
  \[
    |\sin k|=\sin |k-n_k\pi|\HZ{=\sin\sigma_k\in \left(\frac{2}{\pi},1\right)\sigma_k},
  \]
  where in the last equation we used the definition of $\sigma_k$.
\end{proof}

\begin{lemma}\label{kkh}
  Let $k^h:=\frac{2}{h}\arcsin\frac{kh}{2}$ and \HZ{$0<c<1$}. If $0<\frac{kh}{2}<1$, then
  $\frac{1}{24}k^3h^2<k^h-k<\frac{\pi-2}{8}k^3h^2$. If in addition
  $\sigma_k:=\min_{\xi\in \pi\mathbb{N}}|k-\xi|>0$ and $k^3h^2<\frac{8c\sigma_k}{\pi -2}$, then
  $k^h-k<c\sigma_k$ and $|\sin k^h|\HZ{\in(\frac{2(1-c)}{\pi},1+c)\sigma_k}$.
\end{lemma}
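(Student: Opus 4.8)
The plan is to collapse the two-variable problem to the single scalar variable $\mu:=kh/2\in(0,1)$ and then handle the bound on $|\sin k^h|$ by a triangle inequality feeding into Lemma~\ref{sink}. Writing $k=2\mu/h$, I would first record the identity $k^h-k=\frac{2}{h}(\arcsin\mu-\mu)$, which turns both claimed estimates on $k^h-k$ into statements about the single ratio $g(\mu):=(\arcsin\mu-\mu)/\mu^3$. A short computation using $k^3h^2=8\mu^3/h$ shows that $k^h-k<\frac{\pi-2}{8}k^3h^2$ is equivalent to $g(\mu)<\frac{\pi-2}{2}$, and that $k^h-k>\frac{1}{24}k^3h^2$ is equivalent to $g(\mu)>\frac16$.

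To control $g$, I would expand $\arcsin$ in its Maclaurin series, $\arcsin\mu=\sum_{n\ge0}c_n\mu^{2n+1}$ with $c_0=1$ and $c_n=\binom{2n}{n}/(4^n(2n+1))>0$ for $n\ge1$. Subtracting the linear term and dividing by $\mu^3$ gives $g(\mu)=\sum_{n\ge1}c_n\mu^{2n-2}$, a power series with strictly positive coefficients, hence strictly increasing on $(0,1)$. Therefore, for $0<\mu<1$ we obtain $\frac16=c_1=g(0^+)<g(\mu)<\lim_{\mu\to1^-}g(\mu)=\sum_{n\ge1}c_n=\arcsin1-1=\frac{\pi}{2}-1=\frac{\pi-2}{2}$, where the series converges at $1$ since $\arcsin$ is continuous there. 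This delivers both two-sided bounds at once, with strict inequalities because $\mu<1$.

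For the second part, the bound $k^h-k<c\sigma_k$ is immediate: combining the just-proved $k^h-k<\frac{\pi-2}{8}k^3h^2$ with the hypothesis $k^3h^2<\frac{8c\sigma_k}{\pi-2}$ gives $k^h-k<c\sigma_k$. To bound $|\sin k^h|$ from below I would argue multiple by multiple: for every $m\in\mathbb{N}$, the triangle inequality together with $|k-m\pi|\ge\sigma_k$ and $|k^h-k|<c\sigma_k$ yields $|k^h-m\pi|\ge|k-m\pi|-|k^h-k|>(1-c)\sigma_k$, so that $\sigma_{k^h}:=\min_{\xi\in\pi\mathbb{N}}|k^h-\xi|>(1-c)\sigma_k$. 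Applying Lemma~\ref{sink} with $k^h$ in place of $k$ then gives $|\sin k^h|\ge\frac{2}{\pi}\sigma_{k^h}>\frac{2(1-c)}{\pi}\sigma_k$, as desired.

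The only genuinely delicate point is the upper bound $g(\mu)<\frac{\pi-2}{2}$. The main obstacle is that this constant is \emph{sharp}, attained only in the limit $\mu\to1^-$; no crude overestimate of $\arcsin$ will reproduce exactly $(\pi-2)/2$, so one must exploit the monotonicity of $g$ (equivalently, the positivity of every series coefficient) to pin down the constant. Once that is in hand, the remaining steps are elementary triangle inequalities and a direct appeal to Lemma~\ref{sink}.
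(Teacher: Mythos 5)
Your proof is correct and follows essentially the same route as the paper: the two-sided bound on $k^h-k$ reduces to the inequality $x+\tfrac{x^3}{6}<\arcsin x<x+(\tfrac{\pi}{2}-1)x^3$ on $(0,1)$, which the paper simply quotes and which you additionally derive by observing that $(\arcsin\mu-\mu)/\mu^3$ is a power series with positive coefficients, hence increasing from $\tfrac16$ to $\tfrac{\pi}{2}-1$. The second half (triangle inequality over $\pi\mathbb{N}$ followed by Lemma~\ref{sink} applied to $k^h$) is exactly the paper's argument.
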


\begin{proof}
  Since $x+\frac{x^3}{6}<\arcsin x<x + (\frac{\pi}{2}-1)x^3$ for $0<x<1$, we have
  $k\!+\!\frac{1}{24}k^3h^2<k^h< k\!+\! \frac{\pi\!-\!2}{8}k^3h^2<k\!+\!c\sigma_k$.  \HZ{Then
  $|\xi\!-\!k|+|k^h\!-\!k|>|\xi\!-\!k^h|>|\xi\!-\!k|-|k^h\!-\!k|$ gives
  $\min_{\xi\in\pi\mathbb{N}}|\xi\!-\!k^h|\in(1-c,1+c)\sigma_k$.  So
  $|\sin k^h|\in (\frac{2(1\!-\!c)}{\pi},1+c)\sigma_k$}.
\end{proof}

\begin{remark}
  It is important to require $k^3h^2$ being small for convergence of the finite difference
  scheme. Otherwise, Lemma~\ref{kkh} says that $k^h-k$ can be large. For example, when $k$ is large
  and $k^h-k=\pi$, we have $S_1=-1$, $S_2<\frac{\pi^2}{4k^3}$ and $\|A_1\|>1-\frac{\pi^2}{4k^3}$.
\end{remark}

\begin{lemma}\label{S1bounds}
  Under the assumptions of Lemma~\ref{kkh}, for $S_1$ in \eqref{S1} it holds that
  \begin{equation}\label{S1ineq}
    \HZ{\nu(\sigma_k)}(k^h-k)^2<S_1< \left(\frac{\pi^2}{\HZ{8(1-c)^2}\sigma_k^2}+\frac{1}{6}\right)(k^h-k)^2\HZ{,}
  \end{equation}
  \HZ{where, if $(1+c)\sigma_k>\frac{\pi}{2}$, $\nu(\sigma_k):=\frac{1}{8}$, and if
  $(1+c)\sigma_k\le\frac{\pi}{2}$,}
  \[
    \HZ{\nu(\sigma_k):=\max\left\{\frac{1}{\pi(1+c)^2\sigma_k^2}-\frac{3}{8},~~\frac{1}{8}\right\}}.
  \]
\end{lemma}

\begin{proof}
  Let $k\in ((n_k^+-1)\pi,n_k^+\pi)$ for an integer $n_k^+$. By Lemma~\ref{kkh}, it holds that
  $0<k^h-k<c\sigma_k<\frac{\pi}{2}$. Moreover,
  $(n_k^+-1)\pi+\sigma_k\le k <k^h<k+c\sigma_k\le n_k^+\pi-(1-c)\sigma_k$. Hence
  $(n_k^+-1)\pi<k<k^h<n_k^+\pi$, \HZ{$(n_k^+-\frac{1}{2})\pi\in (k,k^h)$ only if $(1+c)\sigma_k>\frac{\pi}{2}$,} and $\cot k - \cot k^h = \frac{k^h-k}{\sin^2\tilde{k}}$ for some
  $\tilde{k}\in(k,k^h)$. Since
  \[
    \HZ{\max\left\{x-\frac{1}{6}x^3,\frac{2}{\pi}x\right\}}<\sin x<x-\frac{1}{8}x^3,\quad 1-\frac{1}{2}x^2<\cos x<1,\quad
    \text{ for }x\in\left(0,\frac{\pi}{2}\right),
  \]
  it can be deduced from \eqref{S1} that
  \[
    S_1<\frac{1}{2}\frac{(k^h-k)^2}{\HZ{\min\{\sin^2k,\sin^2k^h\}}} + 1 -
    \left(1-\frac{1}{6}(k^h-k)^2\right)\HZ{\text{;\;\; and if $(1+c)\sigma_k\le\frac{\pi}{2}$,}}
  \]
  \[
    \HZ{S_1>\frac{1}{2}\frac{2}{\pi}\frac{(k^h-k)^2}{\max\{\sin^2k,\sin^2k^h\}}+1-\frac{1}{2}(k^h-k)^2-(1-\frac{1}{8}(k^h-k)^2)}.\qquad\qquad
  \]
  \HZ{By Lemma~\ref{sink} \MG{and} \ref{kkh}}, \MG{the} two bounds in \eqref{S1ineq} are obtained. The other lower
  bound is derived from \eqref{S1S2},
  \[
    S_1> 1 - \left(1-\frac{1}{8}(k^h-k)^2\right) = \frac{1}{8}(k^h-k)^2,
  \]
  where $0<k^h-k<\frac{\pi}{2}$, $(n_k^+-1)\pi<k<k^h<n_k^+\pi$ and $x+\frac{1}{x}\ge 2$ for $x>0$
  are used.
\end{proof}

\begin{lemma}\label{S2bounds}
  Let $k^h:=\frac{2}{h}\arcsin\frac{kh}{2}$ and $S_2$ given in \eqref{S2}. If $0<\frac{kh}{2}<1$,
  then it holds that
  \[
    -\left(\frac{1}{4k^2}+\frac{1}{4k^3}\right)(k^h-k)^2<S_2<\left(\frac{1}{4k^2}+\frac{1}{4k^3}\right)(k^h-k)^2.
  \]
\end{lemma}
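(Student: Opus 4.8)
The plan is to work directly from the already-factored expression \eqref{S2}, in which the small factor $k^h-k$ has been pulled out twice, and then to bound the remaining bracket by crude but sufficient estimates. Writing $\delta := k^h-k$, Lemma~\ref{kkh} guarantees $\delta>0$ under the sole hypothesis $0<\frac{kh}{2}<1$, so in particular $k^h>k$. Substituting $k-k^h=-\delta$ into \eqref{S2} rewrites $S_2$ as
\[
  S_2 = -\frac{\delta}{2(k+k^h)}\left[\frac{\sin\delta}{k} + \frac{\delta}{kk^h}(\sin k)\cos k^h\right].
\]
Since the claimed bound is symmetric about zero, it suffices to estimate $|S_2|$, which automatically yields both the upper and lower inequalities at once.

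Next I would apply the triangle inequality to the bracket together with the elementary bounds $|\sin\delta|\le\delta$ (valid for all $\delta\ge0$), $|\sin k|\le1$ and $|\cos k^h|\le1$, giving
\[
  \left|\frac{\sin\delta}{k} + \frac{\delta}{kk^h}(\sin k)\cos k^h\right| \le \frac{\delta}{k} + \frac{\delta}{kk^h}.
\]
The decisive simplification of the denominators then comes from $k^h>k$: this yields $k+k^h>2k$ and $kk^h>k^2$, hence $\frac{1}{2(k+k^h)}<\frac{1}{4k}$ and $\frac{1}{kk^h}<\frac{1}{k^2}$.

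Collecting these estimates I would arrive at
\[
  |S_2| < \frac{\delta}{4k}\left(\frac{\delta}{k} + \frac{\delta}{k^2}\right) = \left(\frac{1}{4k^2} + \frac{1}{4k^3}\right)\delta^2,
\]
which is exactly the two-sided bound after recalling $\delta=k^h-k$. I do not anticipate any genuine obstacle: the estimate is elementary and the factor $(k^h-k)^2$ is already built into \eqref{S2}. The only points requiring mild care are handling the signs so that the single absolute-value estimate delivers both bounds simultaneously, and exploiting $k^h>k$ (rather than any two-sided control of $k^h$) to collapse the denominators $k+k^h$ and $kk^h$ to powers of $k$ alone. In contrast to Lemma~\ref{S1bounds}, no $\sigma_k$ enters the estimate, reflecting the observation recorded after \eqref{S1S2} that $S_2$ is genuinely small for large $k$.
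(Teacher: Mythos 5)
Your proposal is correct and is essentially the paper's own argument written out in full: the paper's proof simply states that the bound ``follows directly from $k^h>k>0$, $|\sin x|\le|x|$, $|\sin x|\le 1$ and $|\cos x|\le 1$,'' which are exactly the estimates you apply to the bracket and denominators in \eqref{S2}. Your explicit handling of the sign via $\delta=k^h-k$ and the reduction $k+k^h>2k$, $kk^h>k^2$ fills in precisely the steps the paper leaves implicit.
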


\begin{proof}
  The conclusion follows directly from $k^h>k>0$, $|\sin x|\le |x|$, $|\sin x|\le 1$ and
  $|\cos x|\le 1$.
\end{proof}

\begin{theorem}[$L^2$ Error for $f=0$]
  Under the assumptions of Lemma~\ref{kkh}, for the $L^2$-norm of $A_1$ in \eqref{A0A1},
  it holds that
  \[
    \frac{1}{24\HZ{\sigma_k\sqrt{1+c}}}\sqrt{\HZ{\nu(\sigma_k)}-\frac{1}{4k^2}-\frac{1}{4k^3}}k^3h^2<\|A_1\|<
    \frac{\pi(\pi-2)}{16\sigma_k\sqrt{1-c}}\sqrt{\frac{\pi^2}{\HZ{8(1-c)^2}\sigma_k^2}+\frac{1}{6}+\frac{1}{4k^2}+\frac{1}{4k^3}}k^3h^2,
  \]
  where $\sigma_k=\min_{\xi\in \pi\mathbb{N}}|k-\xi|>0$, $k^3h^2\frac{(\pi-2)}{8\sigma_k}<c<1$ \HZ{and
  $\nu(\sigma_k)$ is defined in Lemma~\ref{S1bounds}}.  Moreover, the error $e^h=u-u^h$ satisfies
  $\|e^h\|\le \|A_1\|(|g_0|+|g_1|)$ with equality attained when $g_0=0$ or $g_1=0$, and the relative
  error satisfies
  \[
    \frac{\|e^h\|}{\|u\|}\le \frac{\sqrt{2}|\sin k|}{\sqrt{1-\frac{\sin 2k}{2k}}}\|A_1\|
  \]
  with equality attained when $g_0=0$ or $g_1=0$. \HZ{Note also that $|\sin k|\asymp \sigma_k$ by
  Lemma~\ref{sink}.}
\end{theorem}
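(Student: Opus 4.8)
The plan is to treat the three assertions in turn, reducing each to the two-sided control of $\|A_1\|$ and to the linear structure $e^h=g_0A_0+g_1A_1$, $u=g_0B_0+g_1B_1$, where $B_1(x):=\sin(kx)/\sin k$ and $B_0(x):=B_1(1-x)$; the reflection $x\mapsto1-x$ is the organising symmetry throughout.

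For the two-sided bound on $\|A_1\|$ I would start from the identity $\|A_1\|^2=(S_1+S_2)/((\sin k)\sin k^h)$ established above and simply combine the preparatory lemmas. Adding the inequalities of Lemma~\ref{S1bounds} and Lemma~\ref{S2bounds} sandwiches $S_1+S_2$ between $(\tfrac18-\tfrac1{4k^2}-\tfrac1{4k^3})(k^h-k)^2$ and $(\tfrac{\pi^2}{4\sigma_k^2}+\tfrac16+\tfrac1{4k^2}+\tfrac1{4k^3})(k^h-k)^2$. For the denominator I use the crude $0<(\sin k)\sin k^h\le1$ to produce the lower bound on $\|A_1\|$, and $(\sin k)\sin k^h>\tfrac{4(1-c)}{\pi^2}\sigma_k^2$ from Lemma~\ref{sink} and Lemma~\ref{kkh} to produce the upper bound. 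Inserting $\tfrac1{24}k^3h^2<k^h-k<\tfrac{\pi-2}{8}k^3h^2$ from Lemma~\ref{kkh} and taking square roots returns the displayed estimate; this step is entirely mechanical.

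The absolute-error statement is immediate: the substitution $x\mapsto1-x$ gives $\|A_0\|=\|A_1\|$, so the triangle inequality applied to $e^h=g_0A_0+g_1A_1$ yields $\|e^h\|\le\|A_1\|(|g_0|+|g_1|)$, with equality forced as soon as $g_0=0$ or $g_1=0$. For the relative error I first record $\|B_1\|^2=\tfrac1{2\sin^2k}(1-\tfrac{\sin2k}{2k})$ from the opening computation of this subsection, so that the announced prefactor is exactly $1/\|B_1\|$ and (by reflection) $\|B_0\|=\|B_1\|$. The equality cases are then transparent: if $g_0=0$ then $e^h=g_1A_1$ and $u=g_1B_1$, whence $\|e^h\|/\|u\|=\|A_1\|/\|B_1\|$ exactly, and symmetrically for $g_1=0$.

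What remains—and where the real work lies—is the upper bound $\|e^h\|/\|u\|\le\|A_1\|/\|B_1\|$ for \emph{arbitrary} $(g_0,g_1)$. Here I would diagonalise both quadratic forms simultaneously using the reflection: set $A^{\pm}:=\tfrac12(A_1\pm A_0)$ and $B^{\pm}:=\tfrac12(B_1\pm B_0)$, which have opposite parity about $x=\tfrac12$ (explicitly $B^{+}=\tfrac{\cos(k(x-1/2))}{2\cos(k/2)}$, $B^{-}=\tfrac{\sin(k(x-1/2))}{2\sin(k/2)}$, with the $k^h$-analogues giving $A^{\pm}=B^{\pm}-\tilde B^{\pm}$) and are therefore mutually orthogonal in $L^2(0,1)$. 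Then $\|e^h\|^2=(g_0+g_1)^2\|A^{+}\|^2+(g_1-g_0)^2\|A^{-}\|^2$ and likewise for $\|u\|^2$, so $\|e^h\|^2/\|u\|^2$ is a weighted mediant of $\rho_{+}:=\|A^{+}\|^2/\|B^{+}\|^2$ and $\rho_{-}:=\|A^{-}\|^2/\|B^{-}\|^2$, while the one-sided value $\|A_1\|^2/\|B_1\|^2$ is precisely the equal-weight mediant $(\|A^{+}\|^2+\|A^{-}\|^2)/(\|B^{+}\|^2+\|B^{-}\|^2)$. The main obstacle is exactly the comparison of these two modes: one must show that the one-sided configuration is extremal, i.e.\ control $\rho_{+}$ against $\rho_{-}$. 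Reducing $\rho_{\pm}$ to the elementary integrals produced by the closed forms above, I expect $\rho_{+}$ and $\rho_{-}$ to differ at next order—their leading forms being $(k^h-k)^2(\tfrac1{12}+\tfrac14\tan^2\tfrac{k}{2})$ and $(k^h-k)^2(\tfrac1{12}+\tfrac14\cot^2\tfrac{k}{2})$—while both keep $\|e^h\|/\|u\|$ of order $k^3h^2$; pinning down this comparison, and hence the precise extremiser, is the delicate heart of the argument, with everything else reducing to bookkeeping.
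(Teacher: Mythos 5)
Your first two claims are handled correctly and exactly as the paper does: the two-sided bound on $\|A_1\|$ is the mechanical combination of Lemmas~\ref{sink}, \ref{kkh}, \ref{S1bounds} and \ref{S2bounds} (with $0<(\sin k)\sin k^h\le 1$ for the lower bound and $(\sin k)\sin k^h\ge \frac{4(1-c)}{\pi^2}\sigma_k^2$ for the upper bound used precisely where you use them, and your constants check out), and the absolute-error inequality with its equality cases is the triangle inequality plus the reflection identity $\|A_0\|=\|A_1\|$.

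The gap is in the third claim. You explicitly leave open the comparison of $\rho_+$ and $\rho_-$, which is exactly the step needed to conclude $\|e^h\|/\|u\|\le\|A_1\|/\|B_1\|$ for arbitrary $(g_0,g_1)$, so as written the proposal does not establish the relative-error bound beyond the cases $g_0=0$ or $g_1=0$. Moreover, your own setup shows that the general inequality is in jeopardy: since $\|A_1\|^2/\|B_1\|^2$ is the $\|B^{\pm}\|^2$-weighted mediant of $\rho_+$ and $\rho_-$, the choices $(g_0,g_1)=(1,1)$ and $(1,-1)$ realize the ratios $\rho_+$ and $\rho_-$ exactly, and $\max(\rho_+,\rho_-)$ strictly exceeds the mediant unless $\rho_+=\rho_-$ --- which your leading-order computation ($\tan^2\frac{k}{2}$ versus $\cot^2\frac{k}{2}$) indicates fails for generic $k$. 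For comparison, the paper does not attempt the general case at all: its one-line proof lower-bounds $\|u\|$ by $\bigl|\,|g_1|\,\|B_1\|-|g_0|\,\|B_0\|\,\bigr|$, which combined with $\|e^h\|\le\|A_1\|(|g_0|+|g_1|)$ yields the displayed bound only when one of $g_0,g_1$ vanishes (the denominator degenerates as $|g_0|\to|g_1|$). So your route is more ambitious and correctly locates the real difficulty, but it is unfinished; to match the paper you need only the reverse-triangle-inequality argument for the one-sided cases, and to go beyond it you would have to either prove $\rho_+=\rho_-$ (which your asymptotics contradict) or conclude that the relative-error inequality should be read as asserted, and sharp, only for one-sided boundary data.
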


\begin{proof}
  The absolute error estimate is obtained by combining Lemmas~\ref{sink}, \ref{kkh}, \ref{S1bounds}
  and \ref{S2bounds}. The relative error estimate follows from
  \[
    \|u\|\ge \left| |g_1|\cdot\left\|\frac{\sin(kx)}{\sin k}\right\| - %
      |g_0|\cdot\left\|\frac{\sin(k(1-x))}{\sin k}\right\|\right|
  \]
  with equality attained when $g_0=0$ or $g_1=0$.
\end{proof}

It is also interesting to estimate the $H^1$-semi-norm of $A_1(x)$ in \eqref{A0A1}. Some
calculations lead to
\[
  |A_1|_1^2=\int_0^1\left(k\frac{\cos(kx)}{\sin k}-k^h\frac{\cos(k^hx)}{\sin k^h}\right)^2~\mathrm{d}x=\frac{kk^h}{\sin k\sin k^h}(\tilde{S}_1+\tilde{S}_2),
\]
where we introduced
\begin{equation}\label{S1S2p}
  \tilde{S}_1:=\frac{1}{2}\frac{k\sin k^h}{k^h\sin k}+ %
  \frac{1}{2}\frac{k^h\sin k}{k\sin k^h}-\frac{\sin(k^h-k)}{k^h-k},\quad %
  \tilde{S}_2:=\frac{(\sin k^h)\cos k}{2k^h} + \frac{(\sin k)\cos k^h}{2k}-\frac{\sin(k+k^h)}{k+k^h}.
\end{equation}
More calculations as previously done for $S_1$ and $S_2$ yield
\begin{multline}\label{S1p}
  \tilde{S}_1=\frac{1}{2}\sin(k^h-k)\left(\cot k-\cot k^h\right) + \cos(k^h-k) - %
  \frac{\sin(k^h-k)}{k^h-k}\\
  \qquad + (k^h-k)\left[-(\sin(k^h-k))\left(\frac{\cot k}{2k^h}+\frac{\cot k^h}{2k} %
    \right) + (\cos(k^h-k))\frac{k^h-k}{2kk^h}\right],
\end{multline}
\begin{equation}\label{S2p}
  \tilde{S}_2=\frac{k^h-k}{2(k+k^h)}\left[\frac{\sin(k-k^h)}{k}+\frac{k^h-k}{kk^h}(\sin k^h)\cos k\right].
\end{equation}

\begin{lemma}\label{S1pbounds}
  Under the assumptions of Lemma~\ref{kkh}, for $\tilde{S}_1$ in \eqref{S1p} it holds that
  \[
    \HZ{\tilde{\nu}(\sigma_k,k)}(k^h-k)^2<\tilde{S}_1< \left(\frac{\pi^2}{\HZ{8(1-c)^2}\sigma_k^2}+\frac{1}{6}+ %
      \frac{\pi(2-c)}{4\sigma_k(1-c)k}+\frac{1}{2k^2}\right)(k^h-k)^2,
  \]
  \HZ{where, if $(1+c)\sigma_k>\frac{\pi}{2}$, $\tilde{\nu}(\sigma_k,k):=\frac{1}{8}$, and if
  $(1+c)\sigma_k\le\frac{\pi}{2}$,}
  \[
    \HZ{\tilde{\nu}(\sigma_k,k):=\max\left\{\frac{1}{\pi(1+c)^2\sigma_k^2}-\frac{3}{8}-\frac{\pi(2-c)}{4\sigma_k(1-c)k}+\frac{2-c^2\sigma_k^2}{8k^2},~~\frac{1}{8}\right\}}.
  \]
\end{lemma}

\begin{proof}
  The lower bound \HZ{$\frac{1}{8}(k^h-k)^2$} is derived from \eqref{S1S2p} similarly to the proof
  of Lemma~\ref{S1bounds}. For the \HZ{other bounds}, note that the first line of \eqref{S1p} equals
  $S_1$, and \HZ{by Lemma~\ref{sink} \MG{and} \ref{kkh}} the remaining line satisfies
  \[
    \begin{aligned}
      &\,(k^h-k)\left[-(\sin(k^h-k))\left(\frac{\cot k}{2k^h}+\frac{\cot
            k^h}{2k} %
        \right) + (\cos(k^h-k))\frac{k^h-k}{2kk^h}\right]\\
      \HZ{\in}&\,(k^h-k)^2\left(\HZ{-\frac{\pi(2-c)}{4\sigma_k(1-c)k}+\frac{1-\frac{1}{2}(k^h-k)^2}{2kk^h}},~~       \frac{\pi(2-c)}{4\sigma_k(1-c)k}+\frac{1}{2k^2}\right).
    \end{aligned}
  \]
  \HZ{We conclude by using $k^h-k<c\sigma_k$, $k^h=\frac{2}{h}\arcsin\frac{kh}{2}<\frac{\pi}{2}k<2k$
    and} Lemma~\ref{S1bounds}.
\end{proof}

\begin{lemma}\label{S2pbounds}
  Let $k^h:=\frac{2}{h}\arcsin\frac{kh}{2}$ and $\tilde{S}_2$ be given in \eqref{S2p}. If
  $0<\frac{kh}{2}<1$, then it holds that
  \[
    -\left(\frac{1}{4k^2}+\frac{1}{4k^3}\right)(k^h-k)^2<\tilde{S}_2<\left(\frac{1}{4k^2}+\frac{1}{4k^3}\right)(k^h-k)^2.
  \]
\end{lemma}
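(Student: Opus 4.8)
The plan is to follow the same elementary route used for $S_2$ in Lemma~\ref{S2bounds}, since $\tilde{S}_2$ in \eqref{S2p} has exactly the same algebraic shape as $S_2$ in \eqref{S2}: a positive prefactor $\frac{k^h-k}{2(k+k^h)}$ multiplying a bracket whose two summands are each a sinusoid divided by a power of $k$ (times $k^h$). Because the asserted estimate is symmetric about zero, it suffices to bound $|\tilde{S}_2|$ by $\left(\frac{1}{4k^2}+\frac{1}{4k^3}\right)(k^h-k)^2$; the two-sided inequality then follows at once.

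First I would control the prefactor. By Lemma~\ref{kkh}, the hypothesis $0<\frac{kh}{2}<1$ gives $k^h>k>0$, so $k^h-k>0$ and $k+k^h>2k>0$; hence $0<\frac{k^h-k}{2(k+k^h)}<\frac{k^h-k}{4k}$. Next I would bound the bracket using only the crude inequalities $|\sin x|\le|x|$, $|\sin x|\le1$, $|\cos x|\le1$ together with $\frac{1}{k^h}<\frac{1}{k}$ (from $k^h>k$). Applying $|\sin(k-k^h)|\le|k-k^h|=k^h-k$ to the first summand and $|\sin k^h|\,|\cos k|\le1$ to the second would yield
\[
  \left|\frac{\sin(k-k^h)}{k}+\frac{k^h-k}{kk^h}(\sin k^h)\cos k\right|
  <\frac{k^h-k}{k}+\frac{k^h-k}{k^2}=(k^h-k)\left(\frac{1}{k}+\frac{1}{k^2}\right).
\]
Multiplying the two bounds then gives $|\tilde{S}_2|<\frac{k^h-k}{4k}\cdot(k^h-k)\left(\frac{1}{k}+\frac{1}{k^2}\right)=\left(\frac{1}{4k^2}+\frac{1}{4k^3}\right)(k^h-k)^2$, which is the claim.

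I expect no genuine obstacle here; the estimate is entirely routine. The only point demanding a moment's care is replacing $\frac{1}{k^h}$ by $\frac{1}{k}$ via $k^h>k$, which converts the awkward factor $\frac{1}{kk^h}(\sin k^h)\cos k$ into the clean $\frac{1}{k^2}$ contribution that matches the stated $\frac{1}{4k^3}$ term. In contrast to the analysis of $\tilde{S}_1$ in Lemma~\ref{S1pbounds}, no sharp lower bound on $|\sin k|$ or $|\sin k^h|$ is required, so neither $\sigma_k$ nor $c$ enters; this is precisely why $\tilde{S}_2$ contributes only the lower-order $O(k^{-2})$ and $O(k^{-3})$ terms, subordinate to the dispersion-dominated $\tilde{S}_1$.
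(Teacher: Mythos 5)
Your proposal is correct and follows essentially the same route as the paper, whose proof is exactly the one-line observation that the bound "follows directly from $k^h>k>0$, $|\sin x|\le |x|$, $|\sin x|\le 1$ and $|\cos x|\le 1$"; you have simply written out the details (prefactor bounded via $k+k^h>2k$, the bracket via $|\sin(k-k^h)|\le k^h-k$ and $\frac{1}{k^h}<\frac{1}{k}$). No gaps.
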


\begin{proof}
  The conclusion follows directly from $k^h>k>0$, $|\sin x|\le |x|$, $|\sin x|\le 1$ and
  $|\cos x|\le 1$.
\end{proof}

\begin{theorem}[$H^1$ error for $f=0$]
  Under the assumptions of Lemma~\ref{kkh}, for $A_1$ in \eqref{A0A1}, \HZ{we have}
  {\small\[
    \frac{1}{24\HZ{\sigma_k\sqrt{1+c}}}\sqrt{\HZ{\tilde{\nu}(\sigma_k,k)}\!-\!\frac{1}{4k^2}\!-\!\frac{1}{4k^3}}k^4h^2\!<\!|A_1|_1\!<\!\frac{\pi(\pi-2)}{16\sigma_k\sqrt{1-c}}\sqrt{\frac{\pi^2}{\HZ{4(1-c)^2}\sigma_k^2}\!+\!\frac{1}{3}\!+\!\frac{\pi(2-c)}{2\sigma_k(1-c)k}\!+\!\frac{3}{2k^2}\!+\!\frac{1}{2k^3}}k^4h^2,
  \]}\noindent
  where $\sigma_k=\min_{\xi\in \pi\mathbb{N}}|k-\xi|>0$, $k^3h^2\frac{(\pi-2)}{8\sigma_k}<c<1$, and
  \HZ{$\tilde{\nu}(\sigma_k,k)$ is defined in Lemma~\ref{S1pbounds}}.  Moreover, the error $e^h=u-u^h$ satisfies $|e^h|_1\le |A_1|_1(|g_0|+|g_1|)$ with equality
  attained when $g_0=0$ or $g_1=0$, and the relative error satisfies
  \[
    \frac{|e^h|_1}{|u|_1}\le \frac{\sqrt{2}|\sin k|}{\sqrt{1+\frac{\sin 2k}{2k}}}\cdot
    \frac{|A_1|_1}{k}
  \]
  with equality attained when $g_0=0$ or $g_1=0$. \HZ{Note also that $|\sin k|\asymp \sigma_k$ by
    Lemma~\ref{sink}.}
\end{theorem}

\begin{proof}
  The conclusion is obtained by combining Lemmas~\ref{sink}, \ref{kkh}, \ref{S1pbounds},
  \ref{S2pbounds}, and $k^h=\frac{2}{h}\arcsin\frac{kh}{2}<\frac{\pi}{2}k<2k$.
\end{proof}

\subsection{Well-posedness of the classical 3-point centered scheme}

The discretized problem is well-posed if and only if
$\lambda^h(\xi):=k^2\!\!-\!\!\frac{4}{h^2}\sin^2\frac{\xi h}{2}\ne 0$ for
$\xi\in\{1,..,N\!\!-\!\!1\}\pi$. Given that the continuous problem \eqref{1dhelm} is well-posed, we
can expect for \emph{all} sufficiently small $h$ that the discretized problem is also well-posed. A
corresponding upper bound for $h$ is:

\begin{lemma}\label{3ptnonzero}
  Suppose $\sigma_k:=\min_{\xi\in \pi\mathbb{N}}|k-\xi|>0$. Let $n_k^+\in\mathbb{N}$ such that
  $(n_k^+\!\!-\!\!1)\pi<k<n_k^+\pi$. If
  \begin{equation}\label{hk}
    0<h=\frac{1}{N}<h_k:=2\sqrt{3\sigma_k}/\left(\frac{\sigma_k}{2}+n_k^+\pi\right)^{3/2}=O(\sqrt{\sigma_k}k^{-3/2}),
  \end{equation}
  \HZ{or $k(kh)^2/\sigma_k\ll 1$ (as found in \cite{CFS25} for general FEMs)}, then
  \begin{equation}\label{sigmakh}
    \sigma_k^h:=\min_{(\xi/\pi)=1,..,N-1}\left|k-\frac{2}{h}\sin\frac{\xi h}{2}\right|\ge
    \frac{\sigma_k}{2},
  \end{equation}
and $\lambda^h:=k^2\!\!-\!\!\frac{4}{h^2}\sin^2\frac{\xi h}{2}\ge \max\{2k-\frac{\sigma_k}{2}, k+\frac{2}{\pi}\}\frac{\sigma_k}{2}$.
\end{lemma}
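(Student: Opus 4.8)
The plan is to study the map $\tilde k(\xi):=\frac{2}{h}\sin\frac{\xi h}{2}$, so that $\sigma_k^h=\min_{m=1,\dots,N-1}|k-\tilde k(m\pi)|$ and $\lambda^h=(k-\tilde k(\xi))(k+\tilde k(\xi))$. Two elementary facts drive everything. First, since $Nh=1$, the argument $\frac{\xi h}{2}$ lies in $(0,\frac{\pi}{2})$ for every $\xi\in\{1,\dots,N-1\}\pi$, so $\tilde k$ is strictly increasing on this range. Second, from $\sin y>y-\frac{y^3}{6}$ (i.e.\ $y-\sin y<\frac{y^3}{6}$) together with $\sin y\le y$, one gets the two-sided bound $\xi-\frac{\xi^3h^2}{24}<\tilde k(\xi)\le\xi$: downsampling can only pull $\tilde k(\xi)$ below $\xi$, and by no more than $\frac{\xi^3h^2}{24}$.

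Next I would split the grid frequencies at $n_k^+$. For $\xi\le(n_k^+-1)\pi$ the chain $\tilde k(\xi)\le\xi\le(n_k^+-1)\pi\le k-\sigma_k$ gives $|k-\tilde k(\xi)|\ge\sigma_k$ with no smallness of $h$ required, so these low modes are harmless. The only mode that can be dragged into the forbidden band $(k-\frac{\sigma_k}{2},\,k+\frac{\sigma_k}{2})$ is the first one above $k$, namely $\xi=n_k^+\pi$, since there $\tilde k(n_k^+\pi)<n_k^+\pi$ is pulled downward toward $k$. Here I would combine the downsampling bound with $n_k^+\pi-k\ge\sigma_k$ to write $\tilde k(n_k^+\pi)>n_k^+\pi-\frac{(n_k^+\pi)^3h^2}{24}\ge k+\sigma_k-\frac{(n_k^+\pi)^3h^2}{24}$, so that $\tilde k(n_k^+\pi)\ge k+\frac{\sigma_k}{2}$ as soon as $\frac{(n_k^+\pi)^3h^2}{24}\le\frac{\sigma_k}{2}$. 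Solving this for $h$ produces a threshold of the stated shape $2\sqrt{3\sigma_k}/(\,\cdot\,)^{3/2}$, and replacing $n_k^+\pi$ by the slightly larger $\frac{\sigma_k}{2}+n_k^+\pi$ yields the clean sufficient bound $h_k$ of \eqref{hk}. Crucially, monotonicity of $\tilde k$ then upgrades this single estimate to all $\xi\ge n_k^+\pi$ at once, via $\tilde k(\xi)\ge\tilde k(n_k^+\pi)\ge k+\frac{\sigma_k}{2}$. Combining the two ranges gives $\sigma_k^h\ge\frac{\sigma_k}{2}$, which is \eqref{sigmakh}.

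For the bound on $\lambda^h$ I would use the factorization $|\lambda^h|=|k-\tilde k(\xi)|\,(k+\tilde k(\xi))\ge\frac{\sigma_k}{2}\,(k+\tilde k(\xi))$ and bound the second factor in two regimes, taking the better of the two. Where the extremal mode sits just on either side of $k$ (so $\tilde k(\xi)$ is close to $k$), I obtain $k+\tilde k(\xi)\ge 2k-\frac{\sigma_k}{2}$, which is the sharp regime for large $k$. For small $k$, where the smallest grid frequency is $\pi$ and $\tilde k(\pi)=\frac{2}{h}\sin\frac{\pi h}{2}$ is bounded below by a fixed positive constant (using $\sin y\ge\frac{2}{\pi}y$), I obtain a bound of the form $k+\frac{2}{\pi}$. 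Taking the maximum yields $|\lambda^h|\ge\max\{2k-\frac{\sigma_k}{2},\,k+\frac{2}{\pi}\}\frac{\sigma_k}{2}$.

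The main obstacle is conceptual rather than computational: because the downsampling error grows like $\xi^3$, a naive mode-by-mode argument would force $h$ to shrink with the highest frequency $(N-1)\pi$ and break down. The step that rescues the argument is the monotonicity of $\tilde k$ on $\xi\in(0,N\pi)$, which collapses the entire high-frequency range onto the single critical frequency $n_k^+\pi$; after that, only the routine $\sin$/$\arcsin$ Taylor inequalities already exploited in Lemmas~\ref{kkh} and \ref{S1bounds} remain.
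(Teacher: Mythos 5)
Your proof of \eqref{sigmakh} is correct and reaches the same conclusion as the paper, but the key step for the high frequencies is genuinely different. Both arguments dispose of $\xi\le(n_k^+-1)\pi$ via $\frac{2}{h}\sin\frac{\xi h}{2}\le\xi\le k-\sigma_k$, and both reduce the danger to the modes $\xi\ge n_k^+\pi$ whose discrete image is pulled down toward $k$. You handle these by observing that $\xi\mapsto\frac{2}{h}\sin\frac{\xi h}{2}$ is increasing on the grid range, so it suffices to check $\frac{2}{h}\sin\frac{n_k^+\pi h}{2}\ge k+\frac{\sigma_k}{2}$ at the single frequency $n_k^+\pi$, which the cubic Taylor bound delivers under $h^2\le 12\sigma_k/(n_k^+\pi)^3$ --- a condition implied by \eqref{hk}. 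The paper instead fixes an arbitrary $\xi\ge n_k^+\pi$ whose image lands in $((n_k^+-1)\pi,n_k^+\pi)$ and uses that very constraint ($\sin\frac{\xi h}{2}<\frac{n_k^+\pi h}{2}$) to bound $\frac{\xi h}{2}-\sin\frac{\xi h}{2}$; that chain of substitutions is what produces the slightly smaller threshold with $n_k^+\pi+\frac{\sigma_k}{2}$ in the denominator. Your monotonicity reduction is cleaner, and you correctly identify why a naive mode-by-mode use of $\xi-\frac{2}{h}\sin\frac{\xi h}{2}\le\frac{\xi^3h^2}{24}$ would fail at $\xi=(N-1)\pi$.

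The one place your write-up has a real (though easily repaired) hole is the bound on $|\lambda^h|$. After discarding half the gap uniformly via $|\lambda^h|\ge\frac{\sigma_k}{2}(k+\frac{2}{h}\sin\frac{\xi h}{2})$, the estimate $k+\frac{2}{h}\sin\frac{\xi h}{2}\ge 2k-\frac{\sigma_k}{2}$ holds only for the modes whose image is near $k$; for $\xi\le(n_k^+-1)\pi$ the second factor can be as small as about $k+2$, and $\frac{\sigma_k}{2}(k+2)<\frac{\sigma_k}{2}(2k-\frac{\sigma_k}{2})$ once $k>2+\frac{\sigma_k}{2}$, so ``taking the better of the two regimes'' does not yield the stated maximum for those modes. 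The fix is to keep the full gap there: for $\xi\le(n_k^+-1)\pi$ one has $k-\frac{2}{h}\sin\frac{\xi h}{2}\ge k-\xi\ge\sigma_k$, hence $|\lambda^h|\ge\sigma_k\cdot k\ge\frac{\sigma_k}{2}(2k-\frac{\sigma_k}{2})$ and also $|\lambda^h|\ge\sigma_k(k+2)\ge\frac{\sigma_k}{2}(k+\frac{2}{\pi})$, using $\frac{2}{h}\sin\frac{\pi h}{2}\ge 2$; for $\xi\ge n_k^+\pi$ your estimate $\frac{2}{h}\sin\frac{\xi h}{2}\ge k+\frac{\sigma_k}{2}$ gives $|\lambda^h|\ge\frac{\sigma_k}{2}(2k+\frac{\sigma_k}{2})$, which dominates both terms of the maximum. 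With that adjustment the whole lemma is proved.
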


\begin{proof}
  If $\frac{2}{h}\sin \frac{\xi h}{2}\ge n_k^+\pi$ or
  $\frac{2}{h}\sin \frac{\xi h}{2}\le (n_k^+-1)\pi$, then
  $\left|k-\frac{2}{h}\sin\frac{\xi h}{2}\right|\ge \sigma_k$. If $\xi< k$, then
  \[
    \frac{2}{h}\sin \frac{\xi h}{2}\le \xi< k\text{ and } \left|k-\frac{2}{h}\sin\frac{\xi
        h}{2}\right|= k-\frac{2}{h}\sin\frac{\xi h}{2}\ge k - \xi = |k-\xi|\ge \sigma_k.
  \]
  We need only to consider the case $(n_k^+-1)\pi< \frac{2}{h}\sin \frac{\xi h}{2}< n_k^+\pi$
  and $\xi\ge n_k^+\pi$. It suffices to show that $n_k^+\pi-\frac{2}{h}\sin\frac{\xi
    h}{2}<\frac{\sigma_k}{2}$, which is equivalent to
  \[
    \frac{\xi h}{2}-\sin\frac{\xi h}{2} < \frac{\sigma_kh}{4} + \frac{\xi h-n_k^+\pi h}{2}.
  \]
  It is thus enough to show that $\theta-\sin\theta<\sigma_kh/4$ for $\theta=\xi h/2$. Since
  $\sin\theta > \theta - \frac{\theta^3}{3!}$ for $\theta\in (0,\pi/2)$, it suffices to show
  $\frac{\theta^3}{3!}<\sigma_kh/4$, which is equivalent to
  $\sin \theta < \sin \sqrt[3]{{3}\sigma_k h/2}=:\sin v$. Since $\sin\theta < n_k^+\pi h/2$ and
  $\sin v> v - \frac{v^3}{3!}$, it is enough to show that $n_k^+\pi h/2 < v - \frac{v^3}{3!}$, which
  is
  \[
    {n_k^+\pi h}/{2}< \sqrt[3]{3\sigma_k h/2} - (\sigma_kh/4).    
  \]
  Solving the inequality for $h$ gives the conclusion.  
\end{proof}

\begin{remark}
  If we define $h_k^*>0$ as the largest reciprocal of an integer such that
\begin{equation}\label{hkstar}
  \min_{(\xi/\pi)=1,..,N-1}\left|k-\frac{2}{h}\sin\frac{\xi h}{2}\right|\ge \frac{\sigma_k}{2}
\end{equation}
for \emph{all} $h=1/N\in(0,h_k^*)$, then $h_k^*$ is likely of the same order in $k$ as the upper
bound $h_k$ in \eqref{hk} is. This is illustrated on the left of Fig.~\ref{fig1}. But
Lemma~\ref{3ptnonzero} does not prohibit using larger mesh size. The existence of a mesh size
$h\sim k^{-1}$ that verifies $\sigma_k^h\ge \sigma_k/2$ is shown on the right of Fig.~\ref{fig1}.
\end{remark}

\begin{figure}
  \centering
  \includegraphics[scale=.42,trim=35 190 50 190,clip]{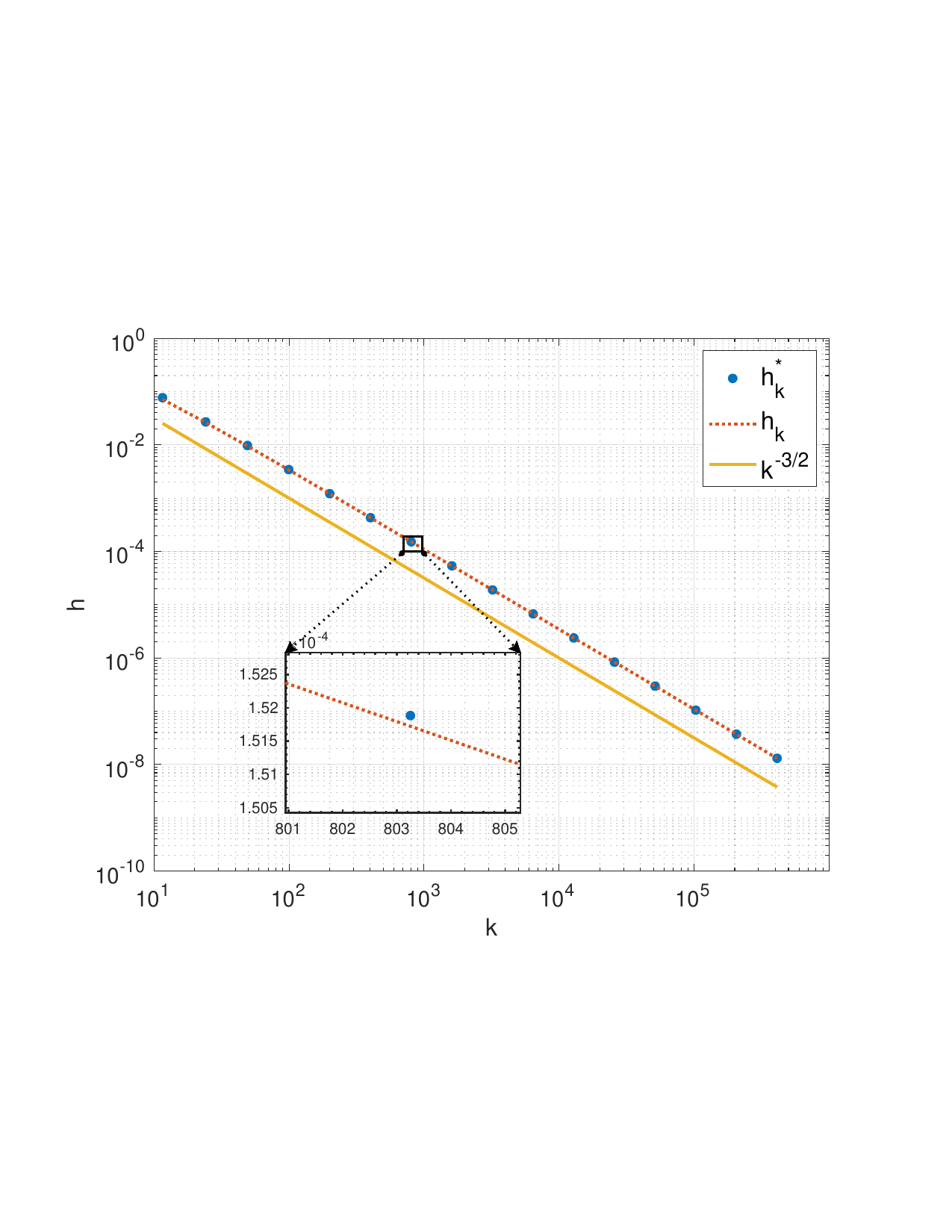}
  \includegraphics[scale=.42,trim=35 190 50 190,clip]{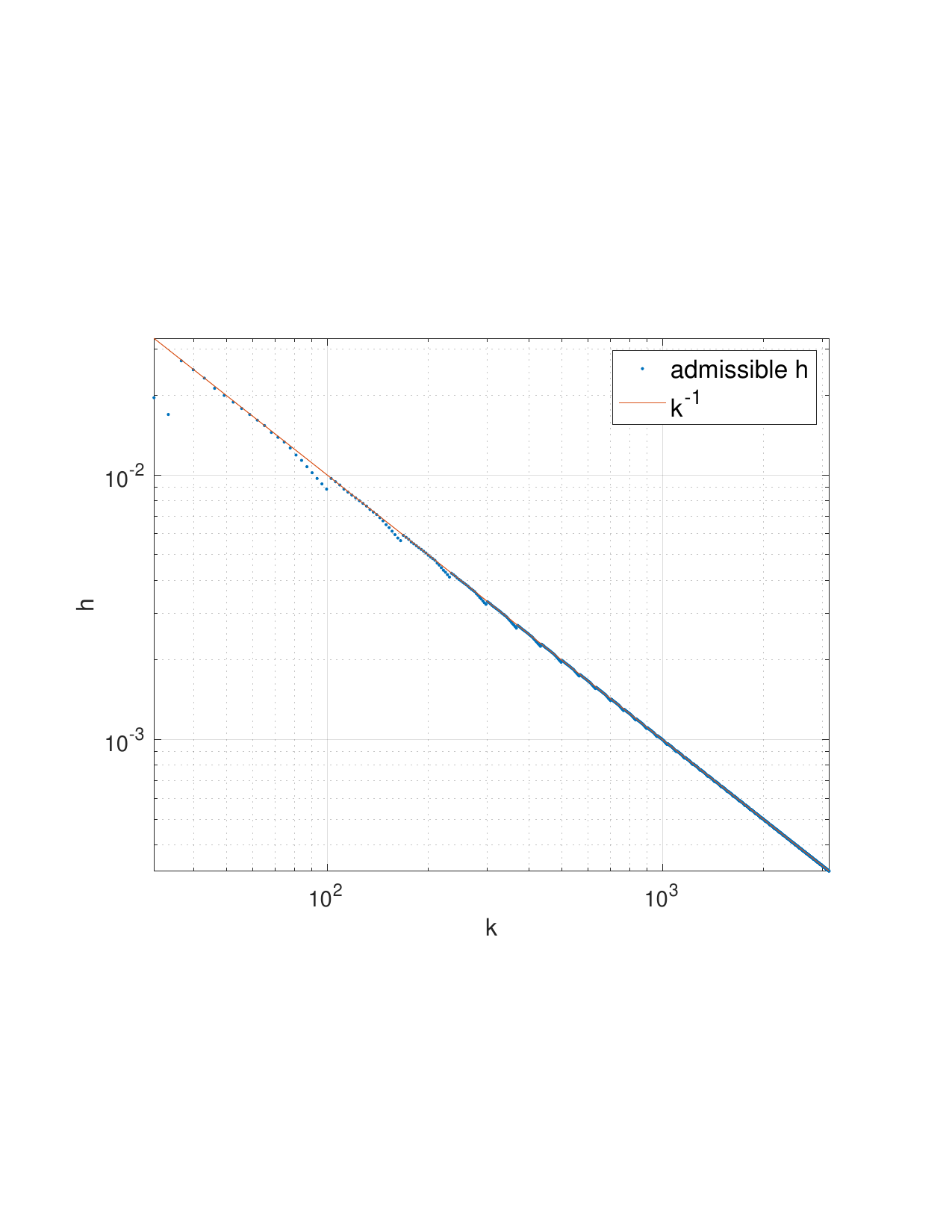}
  \caption{Admissible mesh size $h$ for well-posedness of \eqref{1d3pt}. Left: Upper bounds $h_k$
    \eqref{hk} and $h_k^*$ \eqref{hkstar} so that all smaller $h$ verifies \eqref{sigmakh} with
    $\sigma_k=1$.  Right: Existence of $h=O(k^{-1})$ verifying \eqref{sigmakh} with
    $\sigma_k=1$.}\label{fig1}
\end{figure}

\subsection{Aliasing error of the classical 3-point centered scheme}

From now on, we assume that $\lambda^h\ne 0$. Then the derivative of the partial error $e_1^h$ in
\eqref{errpart} is
\begin{equation}\label{3ptE1E2}
  \begin{aligned}
    (e_1^h)'(x)&=\sum_{(\xi/\pi)=1}^{N-1}\left(\hat{u}(\xi)-\widehat{u^h}(\xi)\right)\xi\cos(\xi x)
    =(E_1^h)'(x) - (E_2^h)'(x)\\
    &:=\sum_{(\xi/\pi)=1}^{N-1}\left(\frac{\xi}{\lambda}-\frac{\xi}{\lambda^h}\right)
    \hat{f}(\xi)\cos(\xi x)- \sum_{(\xi/\pi)=1}^{N-1}\frac{\xi}{\lambda^h}
    \sum_{s=\pm1}s\sum_{m=1}^{\infty}\hat{f}(s\xi+2mN\pi)\cos(\xi x).
  \end{aligned}
\end{equation}
To estimate $E_2^h$, we first note that
\begin{equation}\label{sumf}
  \left|\sum_{m=1}^{\infty}\hat{f}(\pm\xi+2mN\pi)\right|\le
  \sqrt{\sum_{m=1}^\infty\frac{1}{(\pm\xi+2mN\pi)^{2p}}}\cdot
  \sqrt{\sum_{m=1}^\infty\left|(\pm\xi+2mN\pi)^{p}\hat{f}(\pm\xi+2mN\pi)\right|^2},
\end{equation}
and for $0\le \xi\le (N-1)\pi$ that
\begin{equation}\label{sumxi}
  \sum_{m=1}^\infty\frac{1}{(\pm\xi+2mN\pi)^{2p}}\le \frac{1}{(\pm\xi+2N\pi)^{2p}} +
  \frac{1}{2N\pi}\int_{\pm\xi+2N\pi}^{\infty}\frac{1}{t^{2p}}~\mathrm{d}t\le
  \frac{2p}{2p-1}\frac{1}{((\frac{3}{2}\pm\frac{1}{2})N\pi)^{2p}}.
\end{equation}
We can then estimate
\[
  \begin{aligned}
    &\;|E_2^h|_1^2=\sum_{(\xi/\pi)=1}^{N-1}\frac{\xi^2}{\left|\lambda^h\right|^2}%
    \left|\sum_{s=\pm1}\sum_{m=1}^{\infty}\hat{f}(s\xi+2mN\pi)\right|^2\\
    \le&\;\max_{\xi\in\{1,..N-1\}\pi}\frac{\xi^2}{|\lambda^h|^2}\sum_{(\xi/\pi)=1}^{N-1}%
    \left|\sum_{s=\pm1}\sum_{m=1}^{\infty}\hat{f}(s\xi+2mN\pi)\right|^2\\
    \le&\;2\max_{\xi\in\{1,..N-1\}\pi}\frac{\xi^2}{|\lambda^h|^2}\sum_{(\xi/\pi)=1}^{N-1}%
    \left[\left|\sum_{m=1}^{\infty}\hat{f}(\xi+2mN\pi)\right|^2 + %
      \left|\sum_{m=1}^{\infty}\hat{f}(-\xi+2mN\pi)\right|^2\right].
  \end{aligned}
\]
Therefore, for $p\ge1$, using \eqref{sumxi}, we have
\begin{equation}\label{E2max}
  |E_2^h|_1\le 2\max_{\xi\in\{1,..N-1\}\pi}\frac{\xi}{|\lambda^h|}\frac{h^p}{\pi^{p}} |f_{\mathrm{high}}|_p.
\end{equation}

\begin{lemma}\label{lemaliasH1}
  Suppose $k>\pi$, $0<\frac{kh}{2}\le C_\mu<1$, and there is a sequence of $h=\frac{1}{N}$ going to
  zero such that
  \[
    \tilde{\sigma}_k:=\min_{\xi,h}\left|k-\frac{2}{h}\sin\frac{\xi h}{2}\right|>0\; \text{over
      $\xi\in\{1,\ldots,N-1\}\pi$ and the sequence of $h$ going to zero}.
  \]
  Then, for $\lambda^h=k^2-\frac{4}{h^2}\sin^2\frac{\xi h}{2}$, the $H^1$-semi-norm of the aliasing
  error $E_2^h$ in \eqref{3ptE1E2} satisfies
  \begin{equation}\label{E2}
    |E_2^h|_1\le \frac{\HZ{(kh)^p}}{\tilde{\sigma}_k\pi^{p-1}}\HZ{k^{-p}}|f_{\mathrm{high}}|_p.
  \end{equation}
\end{lemma}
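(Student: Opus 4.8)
The plan is to start from the bound \eqref{E2max} that has already been derived, which reduces the entire estimate to controlling the single quantity $\max_{\xi\in\{1,\ldots,N-1\}\pi}\frac{\xi}{|\lambda^h|}$ uniformly over $\xi$ and over the admissible $h$; the aliasing sums, the regularity exponent $p$, and the factor $h^p/\pi^p$ have already been absorbed into $|f_{\mathrm{high}}|_p$ via \eqref{sumf} and \eqref{sumxi}. Thus the task is purely to show
\[
  \max_{\xi\in\{1,\ldots,N-1\}\pi}\frac{\xi}{|\lambda^h|}\le \frac{\pi}{2\tilde{\sigma}_k},
\]
since substituting this into \eqref{E2max} turns $2\cdot\frac{\pi}{2\tilde{\sigma}_k}\cdot\frac{h^p}{\pi^p}$ into exactly $\frac{h^p}{\tilde{\sigma}_k\pi^{p-1}}$, which is the claimed bound \eqref{E2}.

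First I would factor the discrete symbol. Writing $\kappa:=\frac{2}{h}\sin\frac{\xi h}{2}$, we have $\lambda^h=k^2-\kappa^2=(k-\kappa)(k+\kappa)$. Since $k,\kappa>0$, the factor $k+\kappa$ is positive, and the definition of $\tilde{\sigma}_k$ gives $|k-\kappa|\ge\tilde{\sigma}_k$ for every $\xi$ in the range and every $h$ in the chosen sequence. Hence $|\lambda^h|\ge\tilde{\sigma}_k\,(k+\kappa)$, and the problem is reduced to bounding $\frac{\xi}{k+\kappa}$ from above by a quantity independent of $\xi$.

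The key elementary step is a lower bound on $\kappa$ in terms of $\xi$. Because $\xi\le(N-1)\pi$ and $Nh=1$, the argument $\theta:=\frac{\xi h}{2}$ satisfies $\theta\le\frac{\pi}{2}-\frac{\pi}{2N}<\frac{\pi}{2}$, so the concavity inequality $\sin\theta\ge\frac{2}{\pi}\theta$ (the same one used in Lemma~\ref{sink}) applies and yields $\kappa=\frac{2}{h}\sin\theta\ge\frac{2}{h}\cdot\frac{2}{\pi}\cdot\frac{\xi h}{2}=\frac{2\xi}{\pi}$. Discarding the nonnegative term $k$ then gives $k+\kappa\ge\frac{2\xi}{\pi}$, so that
\[
  \frac{\xi}{|\lambda^h|}\le\frac{\xi}{\tilde{\sigma}_k(k+\kappa)}\le\frac{\xi}{\tilde{\sigma}_k\cdot\frac{2\xi}{\pi}}=\frac{\pi}{2\tilde{\sigma}_k}.
\]
This bound is independent of $\xi$, hence it also bounds the maximum, and plugging it into \eqref{E2max} produces \eqref{E2}.

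I do not expect a genuine obstacle here: the estimate is essentially a uniform-in-$\xi$ cancellation of the factor $\xi$ against the growth of $k+\kappa$, and the only point needing care is verifying $\theta<\frac{\pi}{2}$ so that the linear sine bound is legitimate, together with noting that the resulting bound $\frac{\pi}{2\tilde{\sigma}_k}$ carries no $\xi$-dependence. The hypotheses $k>\pi$ and $\frac{kh}{2}\le C_\mu<1$ are not strictly needed for this particular chain (the constraint $\theta<\frac{\pi}{2}$ already follows from $\xi\le(N-1)\pi$); they are presumably retained for consistency with, and reuse in, the surrounding estimates.
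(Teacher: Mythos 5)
Your proof is correct, and it reaches the paper's bound $\max_{\xi}\frac{\xi}{|\lambda^h|}\le\frac{\pi}{2\tilde{\sigma}_k}$ by a genuinely different and far more economical route. The paper works with $\phi(\theta):=\frac{\theta}{|\sin^2\theta_k-\sin^2\theta|}$ and carries out a full case analysis: monotonicity of $\phi$ on $(0,\theta_k)$, the sign of $\phi'$ beyond $\theta_k$, the location of stationary points $\theta_c$ (shown to satisfy $\theta_c\ge\frac{\pi}{4}$ and $\phi(\theta_c)=1/\sin(2\theta_c)\le\frac{1}{2h}$), plus separate evaluations at $\theta_-$, $\theta_+$ and $\theta_{\max}$, before taking the maximum of the candidates. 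Your observation is that none of this is needed for a one-sided bound: writing $\lambda^h=(k-\kappa)(k+\kappa)$ with $\kappa=\frac{2}{h}\sin\frac{\xi h}{2}$, the definition of $\tilde{\sigma}_k$ controls $|k-\kappa|$, and the concavity bound $\sin\theta\ge\frac{2}{\pi}\theta$ on $[0,\frac{\pi}{2}]$ (valid since $\frac{\xi h}{2}\le\frac{\pi}{2}(1-h)$) gives $k+\kappa\ge\kappa\ge\frac{2\xi}{\pi}$ uniformly in $\xi$, so the factor $\xi$ cancels. The paper in effect uses the same two ingredients (it factors $\sin^2\theta_k-\sin^2\theta$ and invokes $\sin x\ge\frac{2}{\pi}x$ in the sub-$\theta_k$ branch), but only branch by branch; you noticed the estimate is uniform. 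Your remark that $k>\pi$ and $\frac{kh}{2}\le C_\mu<1$ are not needed for this chain is also accurate — the paper uses $k>\pi$ only to control the sign of $\phi'(\theta_{\max})$ in its stationary-point analysis, which your argument bypasses entirely. What the paper's longer analysis buys is structural information about where $\frac{\xi}{|\lambda^h|}$ peaks, which is the template reused (for a different $\phi$) in Lemma~\ref{lem3ptmax}; for the statement of Lemma~\ref{lemaliasH1} itself, your argument is sufficient and preferable.
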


\begin{proof}
  Based on \eqref{E2max}, it suffices to find an upper bound of
  \[
    \frac{\xi}{|\lambda^h|}=\frac{\xi}{|k^2-\frac{4}{h^2}\sin^2\frac{\xi h}{2}|}%
    \quad\text{ over }\xi\in\{1,\ldots,N-1\}\pi.
  \]
  Denote $\theta_k:=\arcsin\frac{kh}{2}$ and $\theta:=\frac{\xi h}{2}$.  Then
  \[
    \frac{\xi}{|\lambda^h|}=\frac{h}{2}\frac{\theta}{|\sin^2\theta_k-\sin^2\theta|}%
    =:\frac{h}{2}\phi(\theta)\quad\text{for }\theta\in\{1,\ldots,N-1\}\frac{\pi h}{2}=:\Theta.
  \]
  When $\theta<\theta_k$, the function $\phi(\theta)$ increases with $\theta$. Let
  $\theta_-:=\max\{\theta\in\Theta:~\theta<\theta_k\}$. So
  \[
    \phi(\theta)\le \frac{\theta_-}{\sin^2\theta_k-\sin^2\theta_-}\le %
    \frac{2}{h\tilde{\sigma}_k}\cdot\frac{\theta_-}{\sin\theta_k+\sin\theta_-} %
    \le \frac{2}{h\tilde{\sigma}_k}\cdot\frac{\theta_-}{\frac{2}{\pi}\theta_k+\frac{2}{\pi}\theta_-}
    < \frac{\pi}{2h\tilde{\sigma}_k}\quad\text{when }\theta\in \Theta\cap (0,\theta_k).
  \]
  When $\theta>\theta_k$, the function $\phi(\theta)$ has the derivative
  \[
    \phi'(\theta)=\frac{\sin^2\theta-\sin^2\theta_k-\theta\sin(2\theta)}%
    {(\sin^2\theta-\sin^2\theta_k)^2}.
  \]
  So any stationary point $\theta_c$ of $\phi$ satisfies
  $\sin^2\theta_c-\sin^2\theta_k-\theta_c\sin(2\theta_c)=0$ and
  \[
    \phi(\theta_c)=\frac{\theta_c}{\sin^2\theta_c-\sin^2\theta_k}= \frac{1}{\sin(2\theta_c)}.%
  \]
  Le $\theta_+:=\min\{\theta\in\Theta:~\theta>\theta_k\}$ and $\theta_{\max}:=\max\Theta$. If
  $\theta_+<\frac{\pi}{4}$, then the numerator of $\phi'(\theta_+)$ satisfies
  \[
    \sin^2\theta_+-\sin^2\theta_k-\theta_+\sin(2\theta_+)= %
    \sin(2\tilde{\theta})(\theta_+-\theta_k)-\theta_+\sin(2\theta_+)<0,
  \]
  because by the mean value theorem $\tilde{\theta}<\theta_+$. Note that the derivative of the
  numerator of $\phi'(\theta)$ is $-2\theta\cos(2\theta)$. So if $\theta_+<\frac{\pi}{4}$, then
  $\phi'<0$ on $[\theta_+, \frac{\pi}{4}]$. Therefore, no matter $\theta_+<\frac{\pi}{4}$ or not,
  any stationary point $\theta_c$ of $\phi$ in $[\theta_+,\theta_{\max}]$ must satisfy
  $\theta_c\ge \frac{\pi}{4}$. It follows that
  \[
    \phi(\theta_c)\le \frac{1}{\sin(2\theta_{\max})} = \frac{1}{\sin((N-1)\pi h)}=%
    \frac{1}{\sin(\pi h)}\le \frac{1}{2h}.
  \]
  Since $k>\pi$, the numerator of $\phi'(\theta_{\max})$ satisfies
  \[
    \sin^2\frac{\pi h}{2}-\sin^2\theta_k-(1-h)\frac{\pi}{2}\sin(\pi h) %
    < \frac{\pi^2h^2}{4} - \frac{k^2h^2}{4} - (1-h)\pi h<0.
  \]
  Note that
  \[
    \phi(\theta_+)=\frac{\theta_+}{\sin^2\theta_k-\sin^2\theta_+}\le %
    \frac{2}{h\tilde{\sigma}_k}\cdot\frac{\theta_+}{\sin\theta_k+\sin\theta_+}%
    <\frac{2}{h\tilde{\sigma}_k}\cdot\frac{\theta_+}{\sin\theta_+}<\frac{\pi}{h\tilde{\sigma}_k}.
  \]
  So we obtain
  \[
    \max_{\theta\in\Theta}\phi(\theta)< \max\{\frac{\pi}{h\tilde{\sigma}_k}, \frac{1}{2h}\}%
    =\frac{\pi}{h\tilde{\sigma}_k},\quad\text{and } %
    \max_{\xi\in\{1,\ldots,N-1\}}\frac{\xi}{|\lambda^h|}<\frac{\pi}{2\tilde{\sigma}_k}. 
  \]
  Substituting this into \eqref{E2max} gives the conclusion.  
\end{proof}

\begin{remark}
  By Lemma~\ref{lemaliasH1} the second part $(E_2^h)'$ of $(e_1^h)'$ in \eqref{3ptE1E2} is
  controlled, similarly to $(e_2^h)'$ in \eqref{e2p}, by source $f_{\mathrm{high}}$ \MG{not resolved
    by the grid}, when $k>\pi$, $\frac{kh}{2}\le C_{\mu}<1$ and $\tilde{\sigma}_k>0$. \HZ{But
    $|E_2^h|_1$ behaves as $O((kh)^p/\tilde{\sigma}_k)$, much larger than $|e_2^h|_1=O(h(kh)^p)$, if
    $|f_{\rm{high}}|_p=O(k^p)$.} 
\end{remark}

The $L^2$-norm of the aliasing error can also be derived. Recall that
\begin{equation}\label{3ptE10E20}
  \begin{aligned}
    e_1^h(x)&=\sum_{(\xi/\pi)=1}^{N-1}\left(\hat{u}(\xi)-\widehat{u^h}(\xi)\right)\sin(\xi x)
    =E_{1}^h(x) - E_{2}^h(x)\\
    &=\sum_{(\xi/\pi)=1}^{N-1}\left(\frac{1}{\lambda}-\frac{1}{\lambda^h}\right)
    \hat{f}(\xi)\sin(\xi x)- \sum_{(\xi/\pi)=1}^{N-1}\frac{1}{\lambda^h}
    \sum_{s=\pm1}s\sum_{m=1}^{\infty}\hat{f}(s\xi+2mN\pi)\sin(\xi x).
  \end{aligned}
\end{equation}

\begin{lemma}\label{lemaliasL2}
  Suppose $k>\pi$, $0<\frac{kh}{2}\le C_\mu<1$, and there is a sequence of $h=\frac{1}{N}$ going to
  zero such that
  \[
    \tilde{\sigma}_k:=\min_{\xi,h}\left|k-\frac{2}{h}\sin\frac{\xi h}{2}\right|>0\; \text{over
      $\xi\in\{1,\ldots,N-1\}\pi$ and the sequence of $h$ going to zero}.
  \]
  Then, for $\lambda^h=k^2-\frac{4}{h^2}\sin^2\frac{\xi h}{2}$, the $L^2$-norm of the aliasing error
  $E_{2}^h$ in \eqref{3ptE10E20} satisfies
  \[
    \|E_{2}^h\|\le \frac{\HZ{2h(kh)^{p-1}}}{\tilde{\sigma}_k\pi^{p}}\HZ{k^{-p}}|f_{\mathrm{high}}|_p.
  \]
\end{lemma}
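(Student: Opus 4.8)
The plan is to mirror the proof of Lemma~\ref{lemaliasH1}, replacing the delicate maximization of $\xi/|\lambda^h|$ by a direct and much simpler bound on $1/|\lambda^h|$, since in the $L^2$ setting no factor $\xi$ survives in the numerator. First I would apply Parseval's identity to the sine series \eqref{3ptE10E20} defining $E_2^h$, giving
\[
  \|E_2^h\|^2=\frac{1}{2}\sum_{(\xi/\pi)=1}^{N-1}\frac{1}{|\lambda^h|^2}
  \left|\sum_{s=\pm1}s\sum_{m=1}^{\infty}\hat{f}(s\xi+2mN\pi)\right|^2.
\]
Then, exactly as in the derivation of \eqref{E2max}, I would pull out $\max_\xi 1/|\lambda^h|^2$, split the aliasing sum over $s=\pm1$ using $|a-b|^2\le 2(|a|^2+|b|^2)$, and invoke the Cauchy--Schwarz estimate \eqref{sumf} together with the tail bound \eqref{sumxi} and Parseval for $|f_{\mathrm{high}}|_p$. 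Because the indices $s\xi+2mN\pi$ (over $\xi/\pi=1,\ldots,N-1$, $s=\pm1$, $m\ge1$) are distinct and all exceed $N\pi$, the associated partial Parseval sum is bounded by $2|f_{\mathrm{high}}|_p^2$; collecting the constants and using $\sqrt{4p/(2p-1)}\le 2$ for $p\ge1$ yields the $L^2$-analogue of \eqref{E2max},
\[
  \|E_2^h\|\le 2\max_{\xi\in\{1,..,N-1\}\pi}\frac{1}{|\lambda^h|}\,\frac{h^p}{\pi^p}\,|f_{\mathrm{high}}|_p.
\]

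The only remaining step --- and the one place where the argument genuinely differs from, and is easier than, Lemma~\ref{lemaliasH1} --- is to bound $\max_\xi 1/|\lambda^h|$. Writing $r:=\frac{2}{h}\sin\frac{\xi h}{2}\ge0$, I would factor $|\lambda^h|=|k^2-r^2|=|k-r|\,(k+r)\ge \tilde{\sigma}_k\,k$, using the definition of $\tilde{\sigma}_k$ for $|k-r|\ge\tilde{\sigma}_k$ and $k+r\ge k$ for the second factor. Hence $\max_\xi 1/|\lambda^h|\le 1/(\tilde{\sigma}_k k)$, and substituting this into the displayed bound gives
\[
  \|E_2^h\|\le \frac{2}{\tilde{\sigma}_k k\,\pi^p}\,h^p\,|f_{\mathrm{high}}|_p,
\]
as claimed.

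I do not expect any real obstacle here. Unlike the $H^1$ case, where the factor $\xi$ forces the full study of the stationary points of $\phi(\theta)$, the numerator is now constant and the lower bound $|\lambda^h|\ge\tilde{\sigma}_k k$ is immediate from the product form $k^2-r^2=(k-r)(k+r)$. The only minor care needed is the bookkeeping of the aliased frequencies, to confirm they are distinct and exhausted by $f_{\mathrm{high}}$, and the observation that the $s=-1$ branch (with denominator $(N\pi)^{2p}$ in \eqref{sumxi}) produces the dominant constant, which is therefore the one retained in the uniform bound.
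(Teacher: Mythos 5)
Your proposal is correct and follows essentially the same route as the paper: Parseval plus the splitting over $s=\pm1$, the tail bounds \eqref{sumf}--\eqref{sumxi} to reach the $L^2$-analogue of \eqref{E2max}, and then the factorization $|\lambda^h|=|k-\tfrac{2}{h}\sin\tfrac{\xi h}{2}|\cdot(k+\tfrac{2}{h}\sin\tfrac{\xi h}{2})\ge\tilde{\sigma}_kk$, which is exactly the paper's final step. The only differences are cosmetic (you carry the Parseval factor $\tfrac12$ explicitly, which the paper drops as a harmless overestimate).
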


\begin{proof}
  Based on \eqref{sumf} and \eqref{sumxi}, we have
  \[
    \begin{aligned}
      &\;\|E_{2}^h\|^2=\sum_{(\xi/\pi)=1}^{N-1}\frac{1}{\left|\lambda^h\right|^2}%
      \left|\sum_{s=\pm1}\sum_{m=1}^{\infty}\hat{f}(s\xi+2mN\pi)\right|^2\\
      \le&\;\max_{\xi\in\{1,..N-1\}\pi}\frac{1}{|\lambda^h|^2}\sum_{(\xi/\pi)=1}^{N-1}%
      \left|\sum_{s=\pm1}\sum_{m=1}^{\infty}\hat{f}(s\xi+2mN\pi)\right|^2\\
      \le&\;2\max_{\xi\in\{1,..N-1\}\pi}\frac{1}{|\lambda^h|^2}\sum_{(\xi/\pi)=1}^{N-1}%
      \left[\left|\sum_{m=1}^{\infty}\hat{f}(\xi+2mN\pi)\right|^2 + %
        \left|\sum_{m=1}^{\infty}\hat{f}(-\xi+2mN\pi)\right|^2\right].
    \end{aligned}
  \]
  Therefore, for $p\ge 1$, using \eqref{sumxi}, we get
  \[
    \|E_{2}^h\|\le 2\max_{\xi\in\{1,\ldots,N-1\}\pi} %
    \frac{1}{|\lambda^h|}\frac{h^p}{\pi^p}|f_{\mathrm{high}}|_p.
  \]
  Note that
  \[
    \frac{1}{|\lambda^h|}=\frac{1}{|k^2-\frac{4}{h^2}\sin^2\frac{\xi h}{2}|}= %
    \frac{1}{|k-\frac{2}{h}\sin\frac{\xi h}{2}|} %
    \frac{1}{|k+\frac{2}{h}\sin\frac{\xi h}{2}|} \le%
    \frac{1}{\tilde{\sigma}_kk}.
  \]
  Combining the above two inequalities gives the conclusion.
\end{proof}

\begin{remark}
  By Lemma~\ref{lemaliasL2}, the $L^2$-norm of the aliasing error is
  $O\left(\frac{\HZ{(kh)^{p}}}{\tilde{\sigma}_kk}\right)$ if \HZ{$|f_{\rm{high}}|_p=O(k^p)$}.  This
  can be compared to the $H^1$-semi-norm of the aliasing error which is
  $O\left(\frac{\HZ{(kh)^p}}{\tilde{\sigma}_k}\right)$; see Lemma~\ref{lemaliasH1}.
\end{remark}

\subsection{Absolute error estimates of the classical 3-point centered scheme}

We now turn to estimate the first part of $(e_1^h)'$ given in \eqref{errpart} and \eqref{3ptE1E2},
\begin{equation}\label{E1hdef}
  (E_1^h)'(x)=\sum_{(\xi/\pi)=1}^{N-1}\left(\frac{\xi}{\lambda}-\frac{\xi}{\lambda^h}\right)
  \hat{f}(\xi)\cos(\xi x),
\end{equation}
the only part of $(e^h)'=(E_1^h)'-(E_2^h)'+(e_2^h)'$ controlled by the grid well-resolved source
$f_{\mathrm{low}}$. Since
\begin{equation}\label{E1h}
  |E_1^h|_1^2 = \frac{1}{2}
  \sum_{(\xi/\pi)=1}^{N-1}\left|\frac{\xi}{\lambda}-\frac{\xi}{\lambda^h}\right|^2|\hat{f}(\xi)|^2
  \le \|f_{\mathrm{low}}\|^2 \max_{\xi\in\{1,..,N-1\}\pi}\left|\frac{\xi}{\lambda}-\frac{\xi}{\lambda^h}\right|^2,
\end{equation}
we need only to find the maximum of
\begin{equation}\label{3ptpsi}
  \psi(\xi):=\left| \frac{\xi}{\lambda} - \frac{\xi}{\lambda^h} \right|=
  \left| \frac{\xi}{k^2-\xi^2} - \frac{\xi}{k^2-\frac{4}{h^2}\sin^2\frac{\xi h}{2}} \right|
  =\frac{h}{2}\left| \frac{\theta}{\mu^2-\theta^2} -
    \frac{\theta}{\mu^2-\sin^2\theta}\right|=:\frac{h}{2}\phi(\theta)
\end{equation}
for $\mu:=kh/2$ and $\theta:=\xi h/2$.

\begin{remark}\label{remsharp}
  The inequality in \eqref{E1h} is sharp. Let
  $\xi_*:=\arg\max_{\xi\in\{1,..N-1\}\pi}\psi(\xi)$. Then the inequality holds with equality if
  $f_{\mathrm{low}}=\hat{f}(\xi_*)\sin (\xi_* x)$. In the convergence test, one usually takes a
  fixed $f$ as $h\to 0$. So the sharpness may be perturbed if $\xi_*$ depends on $h$. This is not an
  issue, if $\xi_*$ is independent of $h$ or if $\xi_*$ varies slightly around a fixed number, which
  is the case for $\xi_*$ being the continuous frequencies $\xi$ closest to $k$, or their discrete
  counterparts $\frac{2}{h}\sin\frac{\xi h}{2}$ closest to $k$, as we shall see in
  Lemma~\ref{lem3ptmax}. However, for the case $\xi_*=(N-1)\pi$ which we shall also see in
  Lemma~\ref{lem3ptmax}, the sharpness will require a changing $f$ at higher and higher frequency as
  $h\to 0$. In that case, we will derive an alternate estimate to \eqref{E1h} using the smoothness
  of a fixed $f$.
\end{remark}

\begin{lemma}\label{lem3ptmax}
  Suppose $k>2\pi$, $\mu:=\frac{kh}{2}\le C_\mu< 1$,
  $\sigma_k:=\min_{\xi\in \pi\mathbb{N}}|k-\xi|>0$, and there is a sequence of $h=\frac{1}{N}$
  ($N\ge 4$) going to zero such that
  \[
    \tilde{\sigma}_k:=\min_{\xi,h}\left|k-\frac{2}{h}\sin\frac{\xi h}{2}\right|>0\; \text{over
      $\xi\in\{1,\ldots,N-1\}\pi$ and the sequence of $h$ going to zero}.
  \]
  Let $k^h:=\frac{2}{h}\arcsin\frac{kh}{2}$, $\xi_{\max}=(N-1)\pi$, and $k_{\pm}\in \pi\mathbb{N}$
  (resp.  $k^h_{\pm}\in \pi\mathbb{N}$) be the closest points such that $k_-<k<k_+$
  (resp. $k^h_-<k^h<k^h_+$). Then, $k_+<\xi_{\max}$, and
  \[
    \max_{\xi\in\{1,..,N-1\}\pi}\psi(\xi)=
    \max\{\psi(k_-),\psi(k_+),\psi(k_-^h),\psi(k_+^h),\psi(\xi_{\max})\},
  \]
  where $\psi$ is given in \eqref{3ptpsi}, $\psi(k_{\pm}^h)$ can be removed if
  $k_{\pm}^h\ge\xi_{\max}$, and $k_+$ (resp. $k_-^h$) equals $k_+^h$ (resp. $k_-$) when
  $\pi \mathbb{N}\cap(k,k^h)=\emptyset$. Moreover, we have {\allowdisplaybreaks
  \begin{align*}
      \frac{k^3h^2}{(240\sigma_-+k^3h^2)6\sigma_-}<
      &\,\psi(k_-)<\frac{k^3h^2}{(24\sigma_-+k^3h^2)2\sigma_-}\,,&\\
      \frac{1}{15}\frac{(k+\sigma_+)^3h^2}{\sigma_+(4\HZ{\sigma_{-}^h} + k^3h^2)}<
      &\,\psi(k_+)<\frac{2}{3}\frac{k^3h^2}{\sigma_-^h\sigma_+\sqrt{1-C_\mu^2}}\,,&\text{ if }\pi \mathbb{N}\cap(k,k^h)\ne\emptyset,\\
      \frac{(1-\frac{\pi^2}{80}C_{\mu}^2)(k+\sigma_+)^3h^2}{12\HZ{\sigma_-^h}(4\sigma_++k^3h^2)}<&\,\psi(k_-^h)<\frac{\pi^4}{384}\frac{k^3h^2}{\sigma_-^h\sigma_+\sqrt{1-C_{\mu}^2}}\,,&\text{ if }\pi \mathbb{N}\cap(k,k^h)\ne\emptyset,\\
      \frac{(1-\frac{(1+\pi)^2}{80}C_{\mu}^2)(k+\sigma_+^h)^3h^2}{\HZ{3(4\sigma_+^h+k^3h^2)^2}}<\,&\psi(k_+^h)<\frac{(1+\pi)^4}{16}\frac{k^3h^2}{\sigma_+^h(24\sigma_+^h+k^3h^2)\cos(\theta_\mu+\frac{\sigma_+^hh}{2})}\,,&\text{ if }C_{\mu}<\cos\frac{\sigma_+^hh}{2},\\
      \frac{h}{\pi}\left(\frac{9\pi^2}{64}\!-\!1\right)<&\,\psi(\xi_{\max})<\frac{4(\pi^2-4)\pi h}{\left(\cos^2\frac{\pi h}{2}\!-\!C_\mu^2\right)(9\pi^2-64)}\,,&\text{ if }C_{\mu}<\cos\frac{\pi h}{2},
  \end{align*}
} where $\sigma_{\pm}:=|k-k_{\pm}|\ge\sigma_k=\min\{\sigma_-,\sigma_+\}$,
$\sigma_{\pm}^h:=|k^h-k_{\pm}^h|\ge\tilde{\sigma}_k$ and $\theta_\mu:=\arcsin C_{\mu}$.
\end{lemma}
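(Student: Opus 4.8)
The plan is to argue in two stages: first reduce the discrete maximization to the five named candidates by a shape (monotonicity) analysis of the continuous profile $\phi$ in \eqref{3ptpsi}, and then certify the two-sided bounds at each candidate by substituting the corresponding grid frequency into a convenient factorized form of $\psi$ and estimating the pieces with the elementary $\sin/\arcsin/\cos$ inequalities already invoked in Lemmas~\ref{kkh} and \ref{S1bounds}. The useful first move is to merge the two fractions in \eqref{3ptpsi}: since $\theta^2-\sin^2\theta>0$ on $(0,\frac{\pi}{2})$,
\begin{equation*}
  \phi(\theta)=\frac{\theta\,(\theta^2-\sin^2\theta)}{|\mu^2-\theta^2|\cdot|\mu^2-\sin^2\theta|},
\end{equation*}
so $\phi\ge 0$ with poles exactly at $\theta=\mu$ (i.e. $\xi=k$) and at $\theta=\theta_k:=\arcsin\mu$ (i.e. $\xi=k^h$), and $\mu<\theta_k$ because $\arcsin$ lies above the identity. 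These two poles partition the admissible set $\theta\in\{1,\ldots,N-1\}\frac{\pi h}{2}$ into the three bands $(0,\mu)$, $(\mu,\theta_k)$ and $(\theta_k,\theta_{\max})$ with $\theta_{\max}=\frac{\pi}{2}-\frac{\pi h}{2}$.

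On $(0,\mu)$ the numerator of $\phi$ increases while the positive denominator decreases toward the pole, so $\phi$ is increasing and its grid maximum sits at the grid point just below $\mu$, namely $\xi=k_-$. On $(\mu,\theta_k)$ the profile blows up at both ends, and I would verify via the sign of $\phi'$ that it is valley-shaped (one interior minimum); the maximum of a valley over a finite grid is attained at the two extreme grid points of the band, here $k_+$ (just above $k$) and $k_-^h$ (just below $k^h$), which collapse as stated when $\pi\mathbb{N}\cap(k,k^h)=\emptyset$ so that the band carries no grid point. On $(\theta_k,\theta_{\max})$, $\phi$ is again $+\infty$ at the left pole while $\phi(\theta_{\max})=O(1)$; here I would show that past the pole $\phi$ decreases and turns upward at most once before $\theta_{\max}$, so its grid maximum is at $k_+^h$ or at the endpoint $\xi_{\max}$ (with $\psi(k_\pm^h)$ dropped when $k_\pm^h\ge\xi_{\max}$). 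Finally, the inequality $k_+<\xi_{\max}$ is a short consequence of the resolution and smallness hypotheses ($\mu\le C_\mu<1$, whence $k<2N$, together with $k>2\pi$ and $N\ge4$), placing the pole $\xi=k$ strictly inside the range; collecting the three bands gives the candidate list.

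For the explicit bounds I would use $\psi(\xi)=\xi\,|\lambda^h-\lambda|/(|\lambda|\,|\lambda^h|)$ with $\lambda^h-\lambda=\xi^2-(\frac{2}{h}\sin\frac{\xi h}{2})^2$; the factor $\xi-\frac{2}{h}\sin\frac{\xi h}{2}=\frac{2}{h}(\theta-\sin\theta)$ is squeezed by $\frac{\theta^3}{8}<\theta-\sin\theta<\frac{\theta^3}{6}$, giving $\lambda^h-\lambda$ of exact order $\xi^4h^2$. The remaining denominators $|\lambda|,|\lambda^h|$ are bounded below through $\sigma_\pm$, $\sigma_\pm^h$, $\tilde\sigma_k$ and Lemma~\ref{kkh}, which also converts $\sigma_\pm^h$ into $\sigma_\pm$ and controls $k^h-k$ by $k^3h^2$; at the endpoint one uses $\sin^2\theta_{\max}=\cos^2\frac{\pi h}{2}$ and the hypotheses $C_\mu<\cos\frac{\pi h}{2}$ (resp. $C_\mu<\cos(\theta_\mu+\frac{\sigma_+^h h}{2})$) to keep $|\mu^2-\sin^2\theta|$ away from zero. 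The main obstacle is exactly this shape analysis—rigorously controlling the sign of $\phi'$ on the middle and third bands to exclude any extra interior maximizer, in the spirit of the derivative argument in Lemma~\ref{lemaliasH1}—after which the per-candidate estimates are direct, if tedious, applications of the above squeezes together with the bookkeeping of the stated case distinctions.
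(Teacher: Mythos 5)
Your overall architecture coincides with the paper's: the same merged form of $\phi$ in \eqref{3ptpsi}, the same partition of $\Theta$ into the three bands separated by the poles at $\theta=\mu$ and $\theta=\theta_k=\arcsin\mu$, reduction to the band-edge grid points as candidates, and per-candidate two-sided estimates via the factorization $\lambda^h-\lambda=\frac{2}{h}(\theta-\sin\theta)\cdot(\xi+\frac{2}{h}\sin\theta)$ and elementary Taylor squeezes. However, there is a genuine gap, and you have in fact flagged it yourself: the ``shape analysis'' on the middle and third bands is not an afterthought but the central difficulty of the lemma, and your proposal does not carry it out. On $(\mu,\theta_k)$ the paper does not analyze the sign of $\phi'$ at all; it proves \emph{convexity} of $\phi$ by writing $\phi=\frac{1}{2}(\frac{1}{\theta-\mu}+\frac{1}{\theta+\mu})+\frac{1}{2\mu}(\frac{\theta}{\mu-\sin\theta}+\frac{\theta}{\mu+\sin\theta})$ and bounding the second derivatives termwise — a ``valley-shaped via sign of $\phi'$'' argument is not obviously available, since $\phi'$ changes sign at an interior point whose location you cannot pin down, whereas convexity immediately forces the grid maximum to an endpoint. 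On $(\theta_k,\frac{\pi}{2})$ the claim that $\phi$ ``decreases and turns upward at most once'' is precisely statement \eqref{eqphi}, whose proof occupies all of Appendix~\ref{phiproof}: the critical-point equation is solved as a quadratic in $\nu=\mu^2$, one root branch is excluded, the other branch $w_-(\theta)$ is shown to be strictly increasing via a large explicit polynomial minorization of $v(\theta)$ on $(0,\frac{7}{10})$ plus interval arithmetic on $[\frac{7}{10},\frac{\pi}{2}]$, and sign checks of $\mathcal{P}$ at $\theta=\arcsin\mu$ and $\theta=\frac{\pi}{2}$ identify the unique critical point as a minimum. Without some substitute for this, the candidate list is unproved and the lemma's main equality fails to be established.

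Two secondary remarks. First, your squeeze $\frac{\theta^3}{8}<\theta-\sin\theta<\frac{\theta^3}{6}$ is valid on $(0,\frac{\pi}{2})$ and yields the correct orders $k^3h^2$ and $h$, but it will not reproduce the stated constants: the lower-bound factors such as $(1-\frac{\pi^2}{80}C_\mu^2)$ and the coefficient $\frac{1}{15}$ come specifically from the refinement $\theta-\sin\theta>\frac{\theta^3}{6}(1-\frac{\theta^2}{20})$ combined with $\theta<\frac{\pi}{2}\mu$ (or $\theta<\frac{1+\pi}{2}\mu$), so to prove the lemma as stated you must use those sharper inequalities. Second, the paper also needs the preliminary observation $\sigma_\pm^h\ge\tilde{\sigma}_k$ (via $|\theta_k-\theta_\pm|\ge|\sin\theta_k-\sin\theta_\pm|$) to make the denominators in the $\psi(k_\pm^h)$ bounds legitimate; this step is absent from your outline but is short. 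In summary: same route as the paper, correct reduction strategy and correct orders, but the decisive monotonicity/convexity facts — which in the paper require a dedicated appendix and computer-assisted verification — are asserted rather than proved.
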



\begin{remark}
  Note that $\pi \mathbb{N}\cap(k,k^h)\ne \emptyset$ if and only if $k_+<k^h$. Using
  $\mu^3/6<\arcsin\mu-\mu<(\pi-2)\mu^3/2$ for $0<\mu<1$, we find a sufficient condition for
  $k_+\ge k^h$ is that $k^3h^2\le 8\sigma_+/(\pi-2)$ and a sufficient condition for $k_+<k^h$ is
  that $k^3h^2>24\sigma_+$.
\end{remark}

\begin{remark}\label{remorder}
  On the one hand, if $k^3h^2\HZ{\gtrsim \sigma_-}$, then the lower bound of $\psi(k_-)$ is greater
  than a positive constant times $1/\sigma_-$, independent of $k$ and $h$, no matter how small $kh$
  is. On the other hand, if $k^3h^2<C$, then based on Lemma~\ref{lem3ptmax} we can find the bounds
  \[
    \frac{k^3h^2}{(240\sigma_-+C)6\sigma_-}<\psi(k_-)< \frac{k^3h^2}{48\sigma_-^2}.
  \]
  \HZ{In particular, if $k^3h^2\lesssim\sigma_k$ and $\sigma_-=\sigma_k$, then
    $\psi(k_-)\asymp k^3h^2/\sigma_k^2$.}  \HZ{If $\sigma_+=\sigma_k$ and the assumptions of
    Lemma~\ref{kkh} hold, then $\sigma_+^h \asymp \sigma_k$ and
    $\psi(k_{+}^h)\asymp k^3h^2/\sigma_k^2$.}
\end{remark}

\begin{proof}[Proof of Lemma~\ref{lem3ptmax}]
  Let $\Theta:=\{1,..,N\!-\!1\}\frac{\pi h}{2}$, $\theta_k:=\arcsin\mu$,
  $\theta_{\max}:=\frac{h}{2}\xi_{\max}=\frac{\pi}{2}(1-h)$, $\mu_\pm:=\frac{h}{2}k_{\pm}$, and
  $\theta_{\pm}:=\frac{h}{2}k^h_{\pm}$. We first show that
  $\sigma_{\pm}^h\ge\tilde{\sigma}_k$. Indeed, by the mean value theorem, it holds
  \[
    |k^h-k_{\pm}^h|=\frac{2}{h}\left|\arcsin\frac{kh}{2}-\frac{k_{\pm}^hh}{2}\right|=%
    \frac{2}{h}|\theta_k-\theta_{\pm}|\ge %
    \frac{2}{h}\left|\sin\theta_k-\sin\theta_{\pm}\right|=%
    \left|k-\frac{2}{h}\sin\frac{k_{\pm}^hh}{2}\right|.
  \]

  Recall the definition of $\phi$ in \eqref{3ptpsi}. When $0<\theta<\mu$, the function
  \[
    \phi(\theta)=\frac{(\theta^2-\sin^2\theta)\theta}{(\mu^2-\theta^2)(\mu^2-\sin^2\theta)}
  \]
  is increasing {because the numerator is increasing and the denominator is decreasing. Indeed,
    $(\theta^2-\sin^2\theta)'=2\theta-\sin (2\theta)>0$.} So the $\max\phi(\theta)$ over $[0,\mu_-]$
  is attained at $\theta=\mu_-$.

  When $\mu<\theta<\theta_k$, we have
  \[
    \phi(\theta)=\frac{\theta}{\theta^2-\mu^2} + \frac{\theta}{\mu^2-\sin^2\theta}=
    \frac{1}{2}\left(\frac{1}{\theta-\mu} + \frac{1}{\theta+\mu}\right) +
    \frac{1}{2\mu}\left(\frac{\theta}{\mu-\sin\theta}+\frac{\theta}{\mu+\sin\theta}\right).
  \]
  Note that $1/(\theta\pm \mu)$ is convex for $\theta\in (\mu,\infty)$. We calculate
  \[
    \left(\frac{\theta}{\mu\pm \sin\theta}\right)'=\frac{1}{\mu\pm \sin\theta}
    -\frac{\pm\theta\cos\theta}{(\mu\pm \sin\theta)^2},\quad \left(\frac{\theta}{\mu\pm
        \sin\theta}\right)''= \frac{\mp2\cos\theta\pm \theta\sin\theta}{(\mu\pm \sin\theta)^2} +
    \frac{2\theta\cos^2\theta}{(\mu\pm \sin\theta)^3}.
  \]
  Since $1\ge \mu=\sin\theta_k>\sin\theta>0$ and $\theta>0$, we have
  \[
    \frac{2\cos\theta}{(\mu-\sin\theta)^2}> \frac{2\cos\theta}{(\mu+\sin\theta)^2},\quad
    \frac{2\theta\cos^2\theta}{(\mu- \sin\theta)^3}\ge \frac{\theta\sin\theta}{(\mu-\sin\theta)^2}.
  \]
  Hence, $\phi$ is convex for $\theta\in(\mu,\theta_k)$. If $\mu_+\le \theta_-$ (that is,
  $\Theta\cap(\mu,\theta_k)\ne \emptyset$), then $\phi(\theta)$ for $\theta\in [\mu_+,\theta_-]$
  attains its maximum at either $\mu_+$ or $\theta_-$.
  
  When $\theta_k<\theta<\frac{\pi}{2}$, we have
  \begin{equation}\label{eqphi}
    \phi(\theta)=\frac{\theta}{\sin^2\theta-\mu^2} - \frac{\theta}{\theta^2-\mu^2}
    {\text{ has no local maximum for }\theta_k<\theta<\frac{\pi}{2}\text{ and }0<\mu<1}.
  \end{equation}
  {The proof can be found in Appendix~\ref{phiproof}.}
  Hence, $\phi(\theta)$ on $\theta\in [\theta_+,\theta_{\max}]$ attains its maximum at either
  $\theta_+$ or $\theta_{\max}$ when $\theta_+<\theta_{\max}$ (otherwise, there is no such interval
  to consider). Note that if $\mu\le C_{\mu}<\frac{1}{2}$ and $k>\pi$ then it holds that
  $\theta_+<\theta_{\max}$. Indeed, we have $h<\frac{1}{\pi}<\frac{2}{\pi+\sigma_+^h}$ so
  $\sin(\theta_{\max}-\frac{\sigma_+^hh}{2})=\cos(\frac{\pi+\sigma_+^h}{2}h)>1-\frac{(\pi+\sigma_+^h)^2}{8}h^2>\frac{1}{2}>\sin\theta_k$.
  
  Next, we shall estimate the candidates for $\max\phi(\theta)$.  Using $\sin x>x-\frac{1}{3!}x^3$
  for $x\in (0,\frac{\pi}{2})$ and $\mu-\mu_-= \frac{\sigma_-h}{2}>0$, we have
  \[
    \begin{aligned}
      \phi(\mu_-)&=\frac{\mu_--\sin\mu_-}{\mu-\sin\mu_-}\cdot\frac{\mu_-+\sin\mu_-}{\mu+\sin\mu_-}
      \cdot \frac{\mu_-}{\mu+\mu_-}\cdot\frac{1}{\mu-\mu_-}\\
      &< \frac{\mu_--(\mu_--\frac{1}{3!}\mu_-^3)}{\mu-(\mu_--\frac{1}{3!}\mu_-^3)}\cdot
      1\cdot\frac{1}{2}\cdot \frac{2}{\sigma_-h}< \frac{\mu^3}{3\sigma_-h +
        \mu^3}\cdot\frac{1}{\sigma_-h}=\frac{k^3h}{(24\sigma_-+k^3h^2)\sigma_-}.
    \end{aligned}
  \]
  Note that $\sigma_-<\pi<\frac{k}{2}$ implies $\mu<2\mu-\sigma_-h=2\mu_-$.  Using also
  $x-\frac{1}{3!}x^3<\sin x<x-\frac{1}{3!}x^3+\frac{1}{5!}x^5$ for $x\in (0,\frac{\pi}{2})$ and
  $\mu_-<\mu< 1$, we get
  \[
    \begin{aligned}
      \phi(\mu_-)&>\frac{\mu_--(\mu_--\frac{1}{3!}\mu_-^3+\frac{1}{5!}\mu_-^5)}
      {\mu-(\mu_--\frac{1}{3!}\mu_-^3+\frac{1}{5!}\mu_-^5)}\cdot
      \frac{\mu_-+\mu_--\frac{1}{3!}\mu_-^3}{\mu+\mu_--\frac{1}{3!}\mu_-^3}\cdot\frac{1}{3}\cdot
      \frac{2}{\sigma_-h}\\
      &=\frac{\mu_-^3(1-\frac{1}{20}\mu_-^2)} {3\sigma_-h+\mu_-^3(1-\frac{1}{20}\mu_-^2)}\cdot
      \frac{\mu_-+\mu_--\frac{1}{3!}\mu_-^3}{\mu+\mu_--\frac{1}{3!}\mu_-^3}\cdot\frac{1}{3}\cdot
      \frac{2}{\sigma_-h}\\
      &>\frac{\frac{\mu^3}{2^3}(1-\frac{1}{20})} {3\sigma_-h+\frac{\mu^3}{2^3}(1-\frac{1}{20})}\cdot
      \frac{\mu_-+\mu_--\frac{1}{3!}\mu_-}{2\mu_-+\mu_--\frac{1}{3!}\mu_-}\cdot\frac{1}{3}\cdot
      \frac{2}{\sigma_-h}>\frac{k^3h}{(240\sigma_-+k^3h^2)3\sigma_-}.
    \end{aligned}
  \]
  
  If $\mu_+<\theta_k$ i.e. $\Theta\cap(\mu,\theta_k)\ne\emptyset$, then $\phi(\mu_+)$ and
  $\phi(\theta_-)$ need to be counted as candidates for the maximum. Since $\theta_k=\arcsin\mu$, we
  know $\sin\mu_+<\mu$, and
  \[
    \phi(\mu_+)=\frac{\mu_+-\sin\mu_+}{\mu-\sin\mu_+}\cdot\frac{\mu_++\sin\mu_+}{\mu+\sin\mu_+}
    \cdot \frac{\mu_+}{\mu+\mu_+}\cdot\frac{1}{\mu_+-\mu}.
  \]
  Using $1>C_{\mu}\ge\mu=\sin\theta_k$, $\mu_+\le\theta_-<\theta_k$ and
  $\sigma_-^hh/2=\theta_k-\theta_-<\pi h/2$ we find
  \[
    \begin{aligned}
      \mu-\sin\mu_+&=\sin\theta_k-\sin\mu_+> (\theta_k-\mu_+)\cos\theta_k\ge
      \left(\frac{\sigma_-^hh}{2}+\theta_--\mu_+\right)\sqrt{1-C_\mu^2},\\
      \mu-\sin\mu_+&=\sin\theta_k-\sin\mu_+<\theta_k-\mu_+ \HZ{=(\theta_k-\theta_{-}) +
        (\theta_--\mu_+)}.
    \end{aligned}
  \]
  Note that $\theta_k>\theta_-\ge \mu_+>\mu$ and $\arcsin x - x < x^3$ for $0<x<1$. So
  \[
    0\le \theta_--\mu_+<\theta_k-\mu=\arcsin \mu - \mu < \mu^3.
  \]
  Using $x - \frac{x^3}{3!}+ \frac{x^5}{5!}\!>\!\sin x\!>\! x\!-\!\frac{x^3}{3!}$ for
  $x\!\in\!(0,\pi/2)$ and $\mu\!<\!\mu_+\!<\!2\mu<2$ (for $\mu\!>\!\pi h/2\!>\!\mu_+\!-\!\mu$), we
  have
  \[
    \frac{2\mu_+^3}{15}<\frac{\mu_+^3}{6}\left(1 - \frac{\mu_+^2}{20}\right)< \mu_+-\sin{\mu_+}<
    \frac{\mu_+^3}{6} < \frac{4}{3}\mu^3.
  \]
  Combining all this, we obtain
  \[
    \frac{2}{15}\frac{(k+\sigma_+)^3h}{\sigma_+(4\HZ{\sigma_-^h} + k^3h^2)}=
    \frac{2}{15}\frac{\mu_+^3}{\frac{\HZ{\sigma_{-}^h} h}{2}+\mu^3}\cdot\frac{1}{\sigma_+h}<\phi(\mu_+)<
    \frac{4}{3}\frac{\mu^3}{\frac{\sigma_-^hh}{2}\sqrt{1-C_\mu^2}}\cdot
    \frac{4}{\sigma_+h}=\frac{4}{3}\frac{k^3h}{\sigma_-^h\sigma_+\sqrt{1-C_\mu^2}}.
  \]
  
  If $\theta_-\!>\!\mu$ i.e. $\Theta\!\cap\!(\mu,\theta_k)\!\ne\!\emptyset$, $\phi(\theta_-)$ is a
  candidate for $\max\phi$. For $\mu\!=\!\sin\theta_k$ and $\theta_k\!>\!\theta_-$, we have
  \[
    \phi(\theta_-)=\frac{\theta_--\sin\theta_-}{\mu-\sin\theta_-}\cdot
    \frac{\theta_-+\sin\theta_-}{\mu+\sin\theta_-}
    \cdot \frac{\theta_-}{\mu+\theta_-}\cdot\frac{1}{\theta_--\mu}.
  \]
  Using $1>C_{\mu}\ge\mu=\sin\theta_k$ and $\sigma_-^hh/2=\theta_k-\theta_-$, we find
  \[
    \HZ{\frac{\sigma_{-}^hh}{2}=\theta_k-\theta_->}\,\mu-\sin\theta_{-}=\sin\theta_k-\sin\theta_->
    (\theta_k-\theta_-)\cos\theta_k\ge \frac{\sigma_-^hh}{2}\sqrt{1-C_{\mu}^2}.
  \]
  Using $x - \frac{x^3}{3!}+ \frac{x^5}{5!}\!>\!\sin x\!>\! x\!-\!\frac{x^3}{3!}$ for
  $x\!\in\!(0,\frac{\pi}{2})$ and $\mu\!+\!\frac{h}{2}\sigma_+\!\le\!\theta_-\!<\!\frac{\pi}{2}\mu$
  (for $\theta_-\!<\!\theta_k\!<\!\frac{\pi}{2}\sin\theta_k$), we have
  \[
    \frac{(1-\frac{\pi^2}{80}C_{\mu}^2)(\mu+\frac{h}{2}\sigma_+)^3}{6}<\frac{\theta_-^3}{6}\left(1 -
      \frac{\theta_-^2}{20}\right)< \theta_--\sin{\theta_-}< \frac{\theta_-^3}{6} <
    \frac{\pi^3}{48}\mu^3.
  \]
  Combining all this and using also
  $\theta_--\mu=\theta_--\mu_++\mu_+-\mu<\mu^3+\frac{\sigma_+h}{2}$, we obtain
  \[
    \frac{(1-\frac{\pi^2}{80}C_{\mu}^2)(k+\sigma_+)^3h}{6\HZ{\sigma_{-}^h}(4\sigma_++k^3h^2)}<\phi(\theta_-)<\frac{\pi^3}{48}\frac{\mu^3}{\frac{\sigma_-^hh}{2}\sqrt{1-C_{\mu}^2}}
    \cdot \frac{\pi}{2}\cdot1\cdot\frac{2}{\sigma_+h}
    =\frac{\pi^4}{192}\frac{k^3h}{\sigma_-^h\sigma_+\sqrt{1-C_{\mu}^2}}.
  \]

  If $\theta_+<\theta_{\max}$, then $\theta_{+}$ needs to be considered. For $\mu\!=\!\sin\theta_k$
  and $\theta_k\!<\!\theta_+$, we have
  \[
    \phi(\theta_+)=\frac{\theta_+-\sin\theta_+}{\sin\theta_+-\mu}\cdot
    \frac{\theta_++\sin\theta_+}{\mu+\sin\theta_+}
    \cdot \frac{\theta_+}{\mu+\theta_+}\cdot\frac{1}{\theta_+-\mu}.
  \]
  Using $1>C_{\mu}\ge\mu=\sin\theta_k$,
  $\sigma_+^hh/2=\theta_+-\theta_k<\pi h/2<\frac{kh}{4}=\frac{\mu}{2}$ and
  $C_{\mu}=\sin\theta_\mu<\cos(\sigma_+^hh/2)$ (so that
  $\theta_\mu+\frac{\sigma_+^hh}{2}<\frac{\pi}{2}$), we find
  \[
    \HZ{\theta_{+}-\mu}>\sin\theta_+-\mu=\sin\theta_+-\sin\theta_k>
    (\theta_+-\theta_k)\cos\theta_+>
    \frac{\sigma_+^hh}{2}\cos\left(\theta_\mu+\frac{\sigma_+^hh}{2}\right).
  \]
  Using $x\!-\!\frac{x^3}{3!}\!+\!\frac{x^5}{5!}\!>\!\sin x\!>\! x\!-\!\frac{x^3}{3!}$ for
  $x\!\in\!(0,\frac{\pi}{2})$ and
  $\mu\!+\!\frac{\sigma_+^hh}{2}\!\le\!\theta_+\!=\!\frac{\sigma_+^hh}{2}\!+\!\theta_k\!<\!(\frac{1}{2}+\frac{\pi}{2})\mu$,
  we have
  \[
    \frac{(1-\frac{(1+\pi)^2}{80}C_{\mu}^2)(\mu+\frac{\sigma_+^hh}{2})^3}{6}<
    \frac{\theta_+^3}{6}\left(1 - \frac{\theta_+^2}{20}\right)< \theta_+-\sin{\theta_+}<
    \frac{\theta_+^3}{6} < \frac{(1+\pi)^3}{48}\mu^3.
  \]
  Note that
  $\frac{h}{2}\sigma_+^h+\frac{\mu^3}{6}<\theta_+\!-\!\mu=(\theta_+\!-\!\theta_k)\!+\!(\theta_k\!-\!\mu)\!<\frac{h}{2}\sigma_+^h+\mu^3\!$.
  Combining all this, we get
  \[
    \begin{aligned}
      \phi(\theta_+)\,&<  \frac{(1+\pi)^3}{48}\frac{\mu^3}{\frac{\sigma_+^hh}{2}\cos(\theta_\mu+\frac{\sigma_+^hh}{2})}(\frac{1}{2}+\frac{\pi}{2})\frac{6}{3h\sigma_+^h+\mu^3}\\
      \,&=\frac{(1+\pi)^4}{8}\frac{k^3h}{\sigma_+^h(24\sigma_+^h+k^3h^2)\cos(\theta_\mu+\frac{\sigma_+^hh}{2})},\\
      \phi(\theta_+)\,&>\frac{(1-\frac{(1+\pi)^2}{80}C_{\mu}^2)(\mu+\frac{\sigma_+^hh}{2})^3}{\HZ{12
        (\frac{h}{2}\sigma_+^h+\mu^3)^{2}}}=\frac{(1-\frac{(1+\pi)^2}{80}C_{\mu}^2)(k+\sigma_+^h)^3\HZ{2}h}{\HZ{3(4\sigma_+^h+k^3h^2)^{2}}}.
    \end{aligned}
  \]

  For $\mu\!=\!\sin\theta_k$ and $\theta_k\!<\!\theta_{\max}$, we have
  \[
    \phi(\theta_{\max})=\frac{\theta_{\max}^2 -\sin^2\theta_{\max} }{\sin^2\theta_{\max} -\mu^2}
    \cdot \frac{\theta_{\max} }{\theta_{\max}^2-\mu^2 }.
  \]
  Note that $\sin^2\theta_{\max}=\cos^2\frac{\pi h}{2}>(1-\frac{\pi^2h^2}{8})^2>1-\frac{\pi^2h^2}{4}$,
  $\theta_{\max}^2=\frac{\pi^2}{4}(1-2h+h^2)$. So
  \[
    \frac{9\pi^2}{64}-1<\theta_{\max}^2-\sin^2\theta_{\max}<\frac{\pi^2}{4}-1,
  \]
  where to get the first inequality we used the assumption $0<h\le\frac{1}{4}$. We have also
  \[
    \frac{2}{\pi}<\frac{1}{\theta_{\max}}<\frac{\theta_{\max} }{\theta_{\max}^2-\mu^2 }<\frac{\pi}{2}\left(\frac{9\pi^2}{64}-1\right)^{-1},  
  \]
  where for the second inequality the assumption $0<h\le\frac{1}{4}$ is used. Note that
  \[
    1>\sin^2\theta_{\max}-\mu^2>\cos^2\frac{\pi h}{2}-C_\mu^2.
  \]
  Combining all this and using the assumption $C_{\mu}<\cos\frac{\pi h}{2}$, we get
  \[
    \frac{2}{\pi}\left(\frac{9\pi^2}{64}-1\right)<\phi(\theta_{\max})<\left(\frac{\pi^2}{4}-1\right)\left(\cos^2\frac{\pi h}{2}-C_\mu^2\right)^{-1}\frac{\pi}{2}\left(\frac{9\pi^2}{64}-1\right)^{-1}.
  \]
  We conclude by multiplying $\frac{h}{2}$ with $\phi$ to get $\psi$ \eqref{3ptpsi}.
\end{proof}

From Lemma~\ref{lem3ptmax} and Remark~\ref{remorder}, we can see that, when $k^3h$ is greater than a
certain constant it holds that $\psi(\xi_{\max})<\max\psi$, and the error is of order $k^3h^2$ for a
fixed $f$ in the worst case; otherwise, $\max \psi=\psi(\xi_{\max})$ will be of order $h$. As
discussed in Remark~\ref{remsharp}, in the case of $\max\psi=\psi(\xi_{\max})$ i.e. $k^3h$ less than
a certain constant, we will derive an alternate estimate using the smoothness of $f$.

Before doing that, we first check whether the smoothness of $f$ improves the convergence order of
$\psi(k_-)$ (the discussion of $\psi(k_+)$, $\psi(k_{\pm}^h)$ would be similar). If
$f\in H^1_0(0,1)$, then the estimate \eqref{E1h} can be modified to
\begin{equation}\label{E1hf1}
  |E_1^h|_1^2 = \frac{1}{2}
  \sum_{(\xi/\pi)=1}^{N-1}\left|\frac{1}{\lambda}-\frac{1}{\lambda^h}\right|^2|\xi\hat{f}(\xi)|^2
  \le |f_{\mathrm{low}}|_1^2 \max_{\xi\in\{1,..,N-1\}\pi}\left|\frac{1}{\lambda}-\frac{1}{\lambda^h}\right|^2.
\end{equation}
So we need only to find the maximum of
\begin{equation}\label{psie}
  \psi_e(\xi):=\left| \frac{1}{\lambda} - \frac{1}{\lambda^h} \right|=
  \left| \frac{1}{k^2-\xi^2} - \frac{1}{k^2-\frac{4}{h^2}\sin^2\frac{\xi h}{2}} \right|
  =\frac{h^2}{4}\left| \frac{1}{\mu^2-\theta^2} -
    \frac{1}{\mu^2-\sin^2\theta}\right|=:\frac{h^2}{4}\phi_e(\theta).
\end{equation}
The subscript `e' indicates the quantity is mainly useful for the evanescent modes with $\xi>k^h_+$,
which we shall see as follows. For $\theta\in (0,\mu)$, we have
\[
  \phi_e(\theta)=\frac{1}{\mu^2-\theta^2}-\frac{1}{\mu^2-\sin^2\theta},\quad
  \phi_e'(\theta)=\frac{2\theta(\mu^2-\sin^2\theta)^2-(\mu^2-\theta^2)^2\sin(2\theta)}
  {(\mu^2-\theta^2)^2(\mu^2-\sin^2\theta)^2}>0.
\]
So $\max_{(0,\mu_-]}\phi_e=\phi_e(\mu_-)$. The estimate of $\phi_e(\mu_-)$ needs only a small
modification of the estimate of $\phi(\mu_-)$ in the proof of Lemma~\ref{lem3ptmax}. More precisely,
the term $1/(\mu+\mu_-)$ is used now instead of $\mu_-/(\mu+\mu_-)$. We already know
$\mu_-<\mu<2\mu_-$. So $\frac{1}{2\mu}<1/(\mu+\mu_-)<\frac{2}{3\mu}$ replaces
$\frac{1}{3}<\mu_-/(\mu+\mu_-)<\frac{1}{2}$ and gives
\[
  \frac{k^2}{(240\sigma_-+k^3h^2)\sigma_-}<\phi_e(\mu_-)<
  \frac{\mu^2}{3\sigma_-h+\mu^3}\cdot\frac{4}{3\sigma_- h}=\frac{8k^2}{(24\sigma_-+k^3h^2)3\sigma_-}.
\]
Hence, in terms of $\psi_e$ we have
\[
  \frac{k^2h^2}{(240\sigma_-+k^3h^2)4\sigma_-}<\psi_e(k_-)<
  \frac{2k^2h^2}{(24\sigma_-+k^3h^2)3\sigma_-}.
\]
The upper bound says that $\psi_e(k_-)$ converges to zero no slower than of order $k^2h^2$. This has
no difference in $h$ to the order $k^3h^2$ of $\psi(k_-)$, but it has one order less in
$k$. However, does this really mean $|E_1^h|_1$ converges at the same speed? Note that to make the
inequality \eqref{E1hf1} sharp, or more precisely here make the estimate for $E_1^h$ restricted to
$\xi \in [\pi,k_-]$ sharp, we need to take $f_{\mathrm{low}}$ along $\sin (k_-x)$ whose derivative
will contribute to $|f_{\mathrm{low}}|_1$ with an extra factor $k$. In other words, when the
frequency $k_-$ is active, $|E_1^h|_1$ is still of order $k^3h^2$, not improved by the smoothness of
$f$. This is actually observed in numerical experiments. Let
$f(x)=C(\sin(10\pi x) + \sin(20\pi x) + \sin(40\pi x) + \sin(80\pi x))$ with $C$ such that
$\|f\|=1$, and $k\in \{10, 20, 40, 80\}\pi + 1$. For each $k$, we use a very fine mesh with
$N\approx 8k^2$ to get a reference solution, and a sequence of doubly refined meshes starting with
$N\approx k$ to compute the $H^1$-semi-norm of the errors. The results are shown in
Figure~\ref{figk3h2} which corroborates the order $k^3h^2$. For example, at a fixed $h$, the marked
points from low to high correspond to successively doubled $k$'s, and form roughly a geometric
progression with the common ratio eight. Also, there are pairs of marked points, approximately at
the same height but on different curves, which correspond to $k$-values in a ratio of $4$ (or
$\frac{1}{4}$) and $h$-values in a ratio of $\frac{1}{8}$ (or $8$).

\begin{figure}
  \centering
  \includegraphics[scale=.5,trim=20 190 20 190,clip]{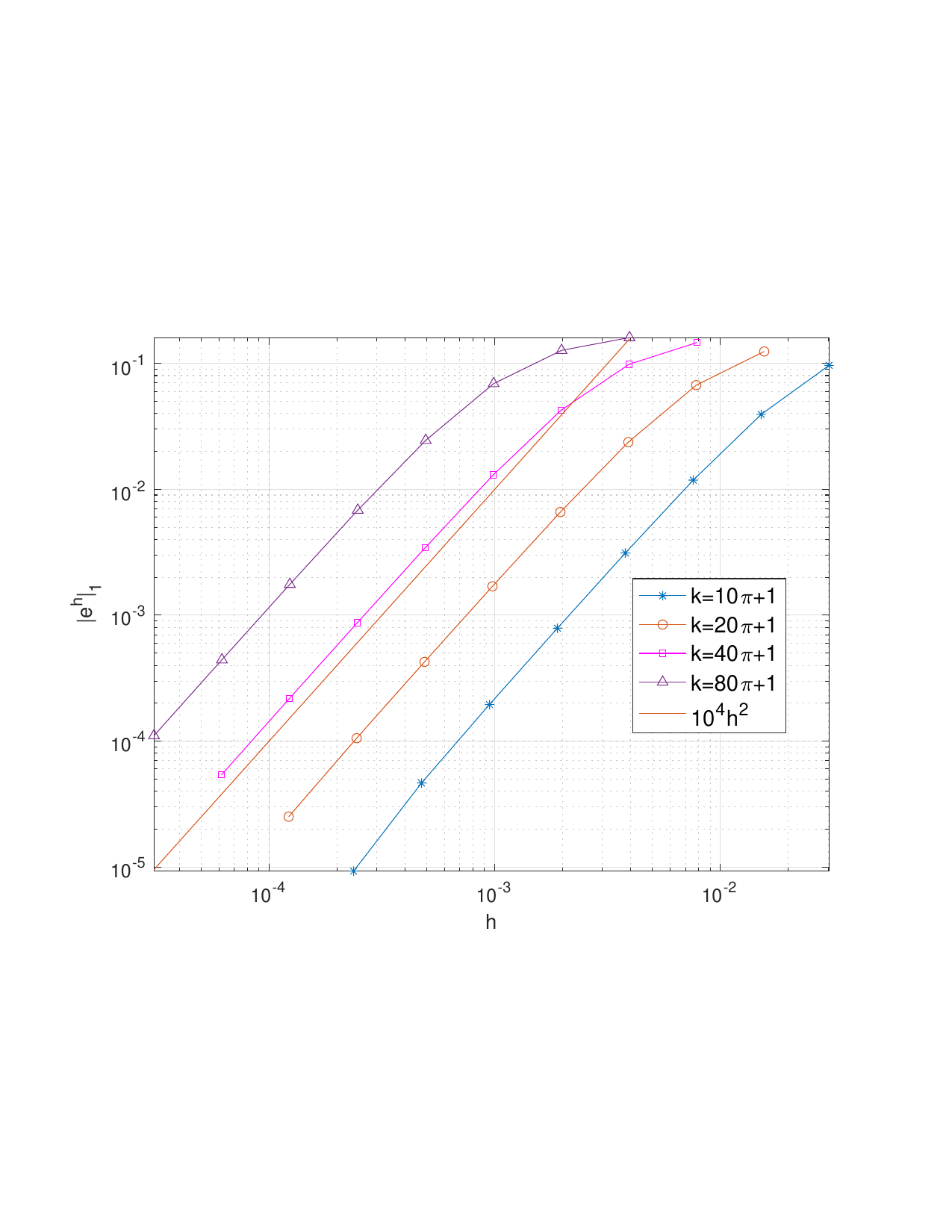}
  \caption{Errors in the $H^1$-semi-norm for
    $f(x)=C(\sin(10\pi x) + \sin(20\pi x) + \sin(40\pi x) + \sin(80\pi x))$.}\label{figk3h2}
\end{figure}

Having understood that we would not gain more from smoothness of $f$ for the propagating modes
(i.e. with $\xi\le k_+^h$), and in the need of a better estimate when $k^3h$ is sufficiently small
(so that $\max\psi=\psi(\xi_{\max})$ which is merely of order $h$), we derive the following
alternate estimates. By the way, we show all the evanescent modes (i.e. with $\xi> k_+^h$) can not
converge faster than of order $h^2$.


\begin{lemma}\label{lemevan}
  Under the assumptions of Lemma~\ref{lem3ptmax} and using the notation therein, it holds that
  \[
    \max_{\xi\in\{1, \ldots, N-1\}\pi}\psi_e=\max\{\psi_e(k_-), \psi_e(k_+), \psi_e(k_-^h),
    \psi_e(k_+^h), \psi_e(\xi_{\max})\},
  \]
  where the function $\psi_e$ is defined in \eqref{psie}.  Moreover, $\psi_e$ is convex on
  $(k,\frac{\pi}{h}]$. Let $\xi_e$ be the unique minimal point of $\psi_e$ over
  $(k,\frac{\pi}{h}]$. Then { \allowdisplaybreaks
    \begin{align*}
      \frac{k^2h^2}{(240\sigma_-+k^3h^2)4\sigma_-}<\psi_e(k_-)&\,<
                                                                \frac{2k^2h^2}{(24\sigma_-+k^3h^2)3\sigma_-},\,&\\
      \frac{1}{15}\frac{(k+\sigma_+)^2h^2}{\sigma_+(4\HZ{\sigma_{-}^h} + k^3h^2)}<
      \,\psi_e(k_+)&\,<\frac{1}{3}\frac{k^2h^2}{\sigma_-^h\sigma_+\sqrt{1-C_\mu^2}},\,&\text{ if
                                                                                        }\pi \mathbb{N}\cap(k,k^h)\ne\emptyset,\\
      \frac{(1-\frac{\pi^2}{80}C_{\mu}^2)(k+\sigma_+)^2h^2}{3\HZ{\sigma_-^h}(2+\pi)(4\sigma_++k^3h^2)}<\,
      \psi_e(k_-^h)\,&<\frac{\pi^4}{768}\frac{k^2h^2}{\sigma_-^h\sigma_+\sqrt{1-C_{\mu}^2}},\,&\text{ if }\pi \mathbb{N}\cap(k,k^h)\ne\emptyset,\\
      \frac{(1-\frac{(1+\pi)^2}{80}C_{\mu}^2)(k+\sigma_+^h)^2h^2}{\HZ
      {3(4\sigma_+^h+k^3h^2)^2}}<\,\psi_e(k_+^h)\,&<\frac{(1+\pi)^4k^2h^2}{32\sigma_+^h(24\sigma_+^h\!+\!k^3h^2)\cos(\theta_\mu\!+\!\frac{\sigma_+^hh}{2})},\,&\text{ if }C_{\mu}<\cos\frac{\sigma_+^hh}{2},\\
      \frac{(9\pi^2-64)h^2}{64\pi^2}<\psi_e(\xi_{\max})&\,<\frac{4(\pi^2-4)h^2}{\left(\cos^2\frac{\pi h}{2}-C_\mu^2\right)\left(9\pi^2-64\right)},\,&\text{ if }C_{\mu}<\cos\frac{\pi h}{2},\\
      \frac{h^2}{18}\,<\psi_e(\xi_e)&\,<\psi_e(\xi_{\max}).\,& 
    \end{align*}
  }
\end{lemma}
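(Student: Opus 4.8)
The plan is to follow the proof of Lemma~\ref{lem3ptmax} almost verbatim, working with $\theta=\xi h/2$ and the function $\phi_e$ from \eqref{psie}, and exploiting the identity $\phi_e=\phi/\theta$ (with $\phi$ as in \eqref{3ptpsi}). The poles of $\phi_e$ sit at $\theta=\mu$ (where $\lambda=0$, i.e. $\xi=k$) and at $\theta=\theta_k:=\arcsin\mu$ (where $\lambda^h=0$, i.e. $\xi=k^h$), splitting $(0,\frac{\pi}{2})$ into $(0,\mu)$, $(\mu,\theta_k)$ and $(\theta_k,\frac{\pi}{2})$. On $(0,\mu)$ the monotonicity $\phi_e'>0$ established just before the lemma gives the node $\mu_-\leftrightarrow k_-$; on $(\mu,\theta_k)$ I will show $\phi_e$ has no interior local maximum, so its maximum over the node-interval $[\mu_+,\theta_-]$ lies at $\mu_+\leftrightarrow k_+$ or $\theta_-\leftrightarrow k_-^h$; on $(\theta_k,\frac{\pi}{2})$ I will prove convexity, so the maximum over $[\theta_+,\theta_{\max}]$ lies at $\theta_+\leftrightarrow k_+^h$ or $\theta_{\max}\leftrightarrow\xi_{\max}$. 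When a node-subinterval is empty the corresponding candidate simply drops out, matching the ``can be removed'' clauses. Together this gives the five-point formula for $\max\psi_e$.

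The two-sided bounds at the four remaining nodes are obtained by recycling the estimates of $\phi(\mu_+),\phi(\theta_-),\phi(\theta_+),\phi(\theta_{\max})$ in the proof of Lemma~\ref{lem3ptmax}, performing the single modification already carried out there for $\phi_e(\mu_-)$: since $\phi_e=\phi/\theta$, each factor $\frac{\theta_\bullet}{\mu+\theta_\bullet}$ in a $\phi$-estimate is replaced by $\frac{1}{\mu+\theta_\bullet}$ (and $\frac{\theta_{\max}}{\theta_{\max}^2-\mu^2}$ by $\frac{1}{\theta_{\max}^2-\mu^2}$), while the remaining factors, the Taylor envelopes for $\sin$, $\cos$, $\arcsin$, and the relations $\sigma_\pm^h h/2=|\theta_k-\theta_\pm|$ and $\mu_\pm=\frac{h}{2}k_\pm$ are unchanged. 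Multiplying the resulting $\phi_e$-bounds by $\frac{h^2}{4}$ yields each displayed inequality; this is purely computational, and I anticipate no difficulty here.

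The substantive step is the convexity of $\psi_e$ on the evanescent branch $\xi\in(k^h,\frac{\pi}{h}]$, equivalently $\theta\in(\theta_k,\frac{\pi}{2}]$; since $\psi_e$ has a pole at $\xi=k^h\in(k,\frac{\pi}{h})$, it is this branch past the pole on which $\xi_e$ lives and on which convexity is meant. Writing $\phi_e=G+g$ with $G=(\sin^2\theta-\mu^2)^{-1}$ and $g=-(\theta^2-\mu^2)^{-1}$, one computes
\[
  g''=-(\theta^2-\mu^2)^{-3}\,(6\theta^2+2\mu^2)<0,\quad
  G''=(\sin^2\theta-\mu^2)^{-3}\bigl[2\sin^2 2\theta-2\cos 2\theta\,(\sin^2\theta-\mu^2)\bigr],
\]
so $g$ is concave and the proof reduces to $G''+g''>0$. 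This is the main obstacle: near $\theta_k$ the factor $(\sin^2\theta-\mu^2)^{-3}$ forces $G''\to+\infty$ and dominates, but away from $\theta_k$ the naive comparison $(\sin^2\theta-\mu^2)^{-3}>(\theta^2-\mu^2)^{-3}$ (from $\sin\theta<\theta$) does not suffice, because the bracket in $G''$ stays $O(1)$ while $6\theta^2+2\mu^2$ can approach $\frac{3\pi^2}{2}$. I expect to clear denominators and reduce $\phi_e''>0$ to a single trigonometric inequality in $\theta$ (with parameter $\mu\in(0,C_\mu)$), then prove it by monotonicity and power-series bounds, using the same mechanism as the companion claim \eqref{eqphi} and most naturally placed in the appendix that already proves \eqref{eqphi}.

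Convexity then yields a unique interior minimizer $\xi_e$, since $\phi_e\to+\infty$ as $\theta\to\theta_k^+$ while $\phi_e'>0$ near $\frac{\pi}{2}$; this gives $\psi_e(\xi_e)<\psi_e(\xi_{\max})$ at once. For the lower bound $\psi_e(\xi_e)>\frac{h^2}{18}$ I write
\[
  \phi_e(\theta)=\frac{\theta^2-\sin^2\theta}{(\sin^2\theta-\mu^2)(\theta^2-\mu^2)}
  >\frac{\theta^2-\sin^2\theta}{\theta^2\sin^2\theta}=\frac{1}{\sin^2\theta}-\frac{1}{\theta^2},
\]
dropping the negative $-\mu^2$ in each denominator. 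Since $\csc^2\theta-\theta^{-2}$ is increasing on $(0,\frac{\pi}{2}]$ with limit $\frac{1}{3}$ at $0$, one has $\phi_e>\frac{1}{3}$ throughout, whence $\psi_e(\xi_e)=\frac{h^2}{4}\phi_e(\xi_e)>\frac{h^2}{12}>\frac{h^2}{18}$, with room to spare.
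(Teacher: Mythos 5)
Your architecture coincides with the paper's: the same three-branch split at the poles $\theta=\mu$ and $\theta=\theta_k$, the same five-candidate reduction, and the same recycling of the Lemma~\ref{lem3ptmax} node estimates via replacing each factor $\theta_\bullet/(\mu+\theta_\bullet)$ by $1/(\mu+\theta_\bullet)$ (the paper does exactly this, e.g.\ $\tfrac{1}{2\mu_+}<\tfrac{1}{\mu+\mu_+}<\tfrac{1}{2\mu}$ in place of $\tfrac12<\tfrac{\mu_+}{\mu+\mu_+}<1$, and $\tfrac{1}{\theta_{\max}^2-\mu^2}$ in place of $\tfrac{\theta_{\max}}{\theta_{\max}^2-\mu^2}$). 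Two points of divergence. First, on the evanescent branch $\theta\in(\theta_k,\frac{\pi}{2})$ you aim to prove genuine convexity $\phi_e''=G''+g''>0$ and leave it as a plan after correctly observing that the naive comparison fails; this is the one unexecuted step in your proposal, and it is also stronger than what is needed. The paper does not prove convexity there either (despite the wording of the lemma statement): Appendix~\ref{a:phie} establishes only the weaker claim \eqref{eqphie} that $\phi_e$ has no local maximum, by writing the critical-point equation as a quadratic in $\nu=\mu^2$, discarding the root exceeding $\sin^2\theta$, showing the remaining root function $w_-$ is strictly increasing (series bounds on $(0,1]$, interval arithmetic on $[1,\pi/2]$), and checking the sign of the numerator of $\phi_e'$ at the endpoints; this ``unique critical point, which is a minimum'' property is all that is actually used, both for the five-point formula and for $\xi_e$. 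If you pursue the convexity route you must still deliver a comparable trigonometric inequality, so nothing is saved. Second, your lower bound for $\psi_e(\xi_e)$ is a genuinely different and cleaner argument: dropping $-\mu^2$ from both denominators gives $\phi_e>\csc^2\theta-\theta^{-2}$, which by the classical inequality $\sin\theta/\theta>(\cos\theta)^{1/3}$ is increasing on $(0,\frac{\pi}{2}]$ with infimum $\tfrac13$, so $\psi_e>h^2/12$ pointwise on the whole branch. The paper instead substitutes the critical-point identity $\frac{1}{\sin^2\theta_e-\mu^2}=\sqrt{2\theta_e/\sin(2\theta_e)}\cdot\frac{1}{\theta_e^2-\mu^2}$ into $\phi_e(\theta_e)$ and obtains only $\tfrac{2}{9}$, i.e.\ $h^2/18$; your route avoids the critical-point equation entirely and yields a sharper constant.
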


\begin{remark}
  Lemma~\ref{lemevan} gives the better and exact order $h^2$ for the evanescent modes, compared to
  Lemma~\ref{lem3ptmax}. It can be seen also from Lemma~\ref{lemevan} that the propagating modes
  usually converge slower than the evanescent modes because the $\psi_e$ at the former frequencies
  have the extra factor $k^2$. We propose to use \eqref{E1hf1} and Lemma~\ref{lemevan} only when
  $k^3h$ is sufficiently small (so that $\max\psi=\psi(\xi_{\max})$; see Lemma~\ref{lem3ptmax}). In
  that case, the estimates of Lemma~\ref{lemevan} can be further improved, but we will not make more
  efforts here.
\end{remark}

\begin{remark}
  Lemma~\ref{lemevan} is useful also for the $L^2$-norm of $E_1^h$ because
  \[
    \|E_1^h\|^2 = \frac{1}{2}
    \sum_{(\xi/\pi)=1}^{N-1}\left|\frac{1}{\lambda}-\frac{1}{\lambda^h}\right|^2|\hat{f}(\xi)|^2 \le
    \|f_{\mathrm{low}}\|^2 \max_{\xi\in\{1,..,N-1\}\pi}\left|\frac{1}{\lambda}-\frac{1}{\lambda^h}\right|^2.
  \]
  Hence, the exact order of $\|E_1^h\|$ is $k^2h^2$.
\end{remark}

\begin{proof}[Proof of Lemma~\ref{lemevan}]
  Recall the notation used in the proof of Lemma~\ref{lem3ptmax}. The estimate of $\psi_e(k_-)$ has
  been given in the above discussion, along with the result $\max_{(0,k_-]}\psi_e=\psi_e(k_-)$.

  When $\mu<\theta<\theta_k$, we have
  \[
    \phi_e(\theta)=\frac{1}{\theta^2-\mu^2} + \frac{1}{\mu^2-\sin^2\theta}=
    \frac{1}{2\mu}\left(\frac{1}{\theta-\mu}-\frac{1}{\theta+\mu}\right) +
    \frac{1}{2\mu}\left(\frac{1}{\mu-\sin\theta}+\frac{1}{\mu+\sin\theta}\right).
  \]
  To show that $\phi_e$ is convex on $(\mu,\theta_k)$, we calculate
  \[
    2\mu\phi_e''(\theta)=\frac{2}{(\theta-\mu)^3}-\frac{2}{(\theta+\mu)^3} -
    \frac{\sin\theta}{(\mu-\sin\theta)^2} + \frac{\sin\theta}{(\mu+\sin\theta)^2} +
    \frac{2\cos^2\theta}{(\mu-\sin\theta)^3} + \frac{2\cos^2\theta}{(\mu+\sin\theta)^3}.
  \]
  Using $0<\sin\theta<\mu<\theta<\frac{\pi}{2}$, we find
  \[
    \frac{2}{(\theta-\mu)^3}>\frac{2}{(\theta+\mu)^3},\quad
    \frac{2\cos^2\theta}{(\mu-\sin\theta)^3}>\frac{\sin\theta}{(\mu-\sin\theta)^2}
    \Leftarrow \sin\theta + \sin^2\theta < 2.
  \]
  Hence, $\phi_e''>0$ on $(\mu,\theta_k)$. So the candidates for $\max\phi_e$ on $(\mu,\theta_k)$
  are $\phi_e(\mu_+)$ and $\phi_e(\theta_-)$ if $\mu_+<\theta_k$. The estimate of $\phi_e(\mu_+)$ is
  quite similar to the estimate of $\phi(\mu_+)$ in the proof of Lemma~\ref{lem3ptmax}. We need only
  to replace $\mu_+/(\mu+\mu_+)$ there with $1/(\mu+\mu_+)$ here. Since $0<\mu<\mu_+$, we find
  \[
    \frac{1}{2\mu_+}<\frac{1}{\mu+\mu_+}<\frac{1}{2\mu}\quad \text{ instead of }\quad
    \frac{1}{2}<\frac{\mu_+}{\mu+\mu_+}<1
  \]
  should be used. Therefore, we get
  \[
    \frac{1}{15}\frac{(k+\sigma_+)^2h^2}{\sigma_+(4\HZ{\sigma_{-}^h} + k^3h^2)}<
    \,\psi_e(k_+)<\frac{1}{3}\frac{k^2h^2}{\sigma_-^h\sigma_+\sqrt{1-C_\mu^2}}\,,\text{ if
    }\pi \mathbb{N}\cap(k,k^h)\ne\emptyset.
  \]
  In a similar way, noting that $\mu+\theta_-<\mu_++\theta_k<\mu_++\frac{\pi}{2}\mu$ and using
  \[
    \frac{2}{(2+\pi) \mu_+}<\frac{1}{\mu+\theta_-}<\frac{1}{2\mu}\quad \text{ instead of }\quad
    \frac{1}{2}<\frac{\theta_-}{\mu+\theta_-}<1,
  \]
  we find
  \[
    \frac{(1-\frac{\pi^2}{80}C_{\mu}^2)(k+\sigma_+)^2h^2}{3\HZ{\sigma_{-}^h}(2+\pi)(4\sigma_++k^3h^2)}<\,\psi_e(k_-^h)<\frac{\pi^4}{768}\frac{k^2h^2}{\sigma_-^h\sigma_+\sqrt{1-C_{\mu}^2}}\,,\text{
      if }\pi \mathbb{N}\cap(k,k^h)\ne\emptyset.
  \]
  
  When $\theta_k<\theta<\frac{\pi}{2}$, with $\mu=\sin\theta_k$ we have that
  \begin{equation}\label{eqphie}
    \phi_e(\theta) = \frac{1}{\sin^2\theta-\mu^2}-\frac{1}{\theta^2-\mu^2}{\text{ has no local maximum for }\theta\in(\theta_k,\frac{\pi}{2})\text{ and }0<\mu<1}.
  \end{equation}
  {The proof can be found in Appendix~\ref{a:phie}.}  So the candidates for $\max\phi_e$ on
  $\Theta\cap(\theta_k,\theta_{\max}]$ are $\phi_e(\theta_+)$ (if $\theta_+<\theta_{\max}$) and
  $\phi_e(\theta_{\max})$.  To estimate $\phi_e(\theta_+)$, we can again modify the estimates of
  $\phi(\theta_+)$ in the proof of Lemma~\ref{lem3ptmax}.  Using
  \[
    \frac{1}{2 \theta_+}<\frac{1}{\mu+\theta_+}<\frac{1}{2\mu}\quad \text{ instead of }\quad
    \frac{1}{2}<\frac{\theta_+}{\mu+\theta_+}<1,
  \]
  we can find
  \[
    \frac{(1-\frac{(1+\pi)^2}{80}C_{\mu}^2)(k+\sigma_+^h)^2h^2}{\HZ{3(4\sigma_+^h+k^3h^2)^2}}<\,\psi_e(k_+^h)<\frac{(1+\pi)^4k^2h^2}{32\sigma_+^h(24\sigma_+^h+k^3h^2)\cos(\theta_\mu+\frac{\sigma_+^hh}{2})}\,,\text{ if }C_{\mu}<\cos\frac{\sigma_+^hh}{2}.
  \]
    
  For $\mu\!=\!\sin\theta_k$ and $\theta_k\!<\!\theta_{\max}$, we have
  \[
    \phi_e(\theta_{\max})=\frac{\theta_{\max}^2 -\sin^2\theta_{\max} }{\sin^2\theta_{\max} -\mu^2}
    \cdot \frac{1}{\theta_{\max}^2-\mu^2 }.
  \]
  Recall from the proof of Lemma~\ref{lem3ptmax},
  \[
    \frac{9\pi^2}{64}-1<\theta_{\max}^2-\sin^2\theta_{\max}<\frac{\pi^2}{4}-1,\quad
    1>\sin^2\theta_{\max}-\mu^2>\cos^2\frac{\pi h}{2}-C_\mu^2.
  \]
  Since $\theta_{\max}^2=\frac{\pi^2}{4}(1-2h+h^2)$, $0<\mu<1$ and $0<h\le \frac{1}{4}$, we find
  \[
    \frac{\pi^2}{4}>\theta_{\max}^2-\mu^2>\frac{9\pi^2}{64}-1.
  \]
  Combining this and using the assumption $C_{\mu}<\cos\frac{\pi h}{2}$, we obtain
  \[
    \left(\frac{9\pi^2}{64}-1\right)\frac{4}{\pi^2}<\phi_e(\theta_{\max})<\left(\frac{\pi^2}{4}-1\right)\left(\cos^2\frac{\pi h}{2}-C_\mu^2\right)^{-1}
    \left(\frac{9\pi^2}{64}-1\right)^{-1}.
  \]

  Denote $\theta_e:={\xi_eh}/{2}$.  To find the minimal point $\theta_e$ of $\phi_e$ on
  $[\theta_k,\frac{\pi}{2}]$, we calculate
  \[
    \phi_e'(\theta)=-\frac{\sin(2\theta)}{(\sin^2\theta-\mu^2)^2} +
    \frac{2\theta}{(\theta^2-\mu^2)^2},\quad\phi_e'\left(\frac{\pi}{2}\right)>0,\quad
    \lim_{\theta\to\theta_k+0}\phi_e'(\theta)<0.
  \]
  Since $\phi_e$ is convex in $[\theta_k,\frac{\pi}{2}]$ and
  $\theta_e=\arg\min_{[\theta_k,{\pi}/{2}]}\phi_e$, we have $\phi_e'(\theta_e)=0$,
  from which we find
  \[
    \frac{1}{\sin^2\theta_e-\mu^2}=\sqrt{\frac{2\theta_e}{\sin(2\theta_e)}}\cdot\frac{1}{\theta_e^2-\mu^2}.
  \]
  Substituting this into $\phi_e(\theta_e)$, and using $\frac{x}{\sin x}\!>\!1\!+\!\frac{x^2}{6}$
  for $0\!<\!x\!<\!\pi$ and $\sqrt{1+x}\!>\!1\!+\!\frac{x}{3}$ for $0\!<\!x\!<\!3$, we get
  \[
    \phi_e(\theta_e)=\left(\sqrt{\frac{2\theta_e}{\sin(2\theta_e)}}-1\right)\frac{1}{\theta_e^2-\mu^2}>\frac{4\theta_e^2}{18}\cdot\frac{1}{\theta_e^2}=\frac{2}{9}.
  \]
  Multiplying $\phi_e$ with $\frac{h^2}{4}$ gives the conclusion for $\psi_e$.
\end{proof}

Putting all the results on the downsampling, aliasing and operator errors together, we have an
estimate of the total error in the $H^1$-semi-norm and $L^2$-norm.

\begin{theorem}[Absolute error with nonzero $f$]\label{abserr}
  Suppose the problem \eqref{1dhelm} has $k>2\pi$, $\sigma_k:=\min_{\xi\in \pi\mathbb{N}}|k-\xi|>0$,
  $f\in H_0^p(0,1)$ with $p\ge 1$, $g_0=g_1=0$. Suppose
  $0<\frac{kh}{2}\le C_\mu<\cos\frac{\pi h}{2}$, and there is a sequence of $h=\frac{1}{N}$
  ($N\ge4$) going to zero such that
  \[
    \tilde{\sigma}_k:=\min_{\xi,h}\left|k-\frac{2}{h}\sin\frac{\xi h}{2}\right|>0\; \text{over
      $\xi\in\{1,\ldots,N-1\}\pi$ and the sequence of $h$ going to zero}.
  \]
  Let $f$ be decomposed as \eqref{flowhigh}. Then the finite difference scheme \eqref{1d3pt} has a
  unique solution $u^h$. Moreover,
  \[
    |u-u^h|_1\le \left(\frac{\HZ{h(kh)^p}}{\pi^2-4C_{\mu}^2}+ \frac{\HZ{(kh)^p}}{\tilde{\sigma}_k}\right)\pi^{1-p} \HZ{k^{-p}}|f_{\mathrm{high}}|_p+ |E_1^h|_1,
  \]
  \[
    \|u-u^h\|\le \left(\frac{\HZ{h^2(kh)^{p}}}{\pi^2-4C_{\mu}^2}+ \frac{2\HZ{(kh)^{p}}}{\tilde{\sigma}_kk}\right)\pi^{-p}\HZ{k^{-p}}|f_{\mathrm{high}}|_p+ \|E_1^h\|,
  \]
  with $E_1^h$ given in \eqref{E1hdef}, and equality is attained when $f_{\mathrm{high}}=0$. In
  turn, $E_1^h$ satisfies
  \[
    \frac{h^2}{18}|f_{\mathrm{low,e}}|_1\le|E_1^h|_1\le \min\{C(k,h)\|f_{\mathrm{low}}\|, C_e(k,h)|f_{\mathrm{low}}|_1\},%
    \quad \frac{h^2}{18}\|f_{\mathrm{low,e}}\|\le\|E_1^h\|\le C_e(k,h)\|f_{\mathrm{low}}\|,
  \]
  where $f_{\mathrm{low,e}}:=\sum_{(\xi/\pi)=\lceil k^h/\pi\rceil}^{N-1}\hat{f}(\xi)\sin(\xi x)$,
  $k^h:=\frac{2}{h}\arcsin\frac{kh}{2}$, the upper bounds are attainable, and there exist positive
  constants $C_1$, $C_2$, $c$, $c_e$, $\tilde{C}$ and $\tilde{C}_e$ independent of $k$ and $h$ such
  that, when $k^3h^2<C_1\HZ{\sigma_k}$ it holds that
  \[
    c k^3h^2\HZ{/\sigma_k^2}<C(k,h)<\tilde{C}k^3h^2\HZ{/\sigma_k^2},\quad
    c_e k^3h^2\HZ{/\sigma_k^2}<kC_e(k,h)<\tilde{C}_ek^3h^2\HZ{/\sigma_k^2},%
  \]
  and when $k^3h^2>C_2\HZ{\max(\sigma_k,\sigma_-^h)}$ it holds that $C(k,h)$ or $kC_e(k,h)$ is
  greater than a positive constant \HZ{times $1/\sigma_k$, where $\sigma_-^h$ is defined in
    Lemma~\ref{lem3ptmax}}.
\end{theorem}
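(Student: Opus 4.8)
The plan is to run everything through the frequency splitting already prepared, namely $(e^h)'=(E_1^h)'-(E_2^h)'+(e_2^h)'$ from \eqref{errpart}--\eqref{3ptE1E2} and its $L^2$ analogue \eqref{3ptE10E20}, and then apply the triangle inequality. Well-posedness is immediate: the hypothesis $\tilde{\sigma}_k>0$ forces $\lambda^h=k^2-\frac{4}{h^2}\sin^2\frac{\xi h}{2}\ne0$ for every $\xi\in\{1,\ldots,N-1\}\pi$ and every $h$ in the sequence, so the diagonalised system is invertible and $u^h$ is unique. For the two error estimates I would collect the three high-frequency contributions: $|e_2^h|_1$ and $\|e_2^h\|$ from the downsampling bounds \eqref{downerr} and \eqref{downerrL2}, and $|E_2^h|_1$, $\|E_2^h\|$ from the aliasing Lemmas~\ref{lemaliasH1} and \ref{lemaliasL2}. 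Adding the corresponding $h^p|f_{\mathrm{high}}|_p$ bounds reproduces exactly the prefactors $\bigl(\frac{h}{\pi^2-4C_\mu^2}+\frac{1}{\tilde{\sigma}_k}\bigr)\pi^{1-p}$ and $\bigl(\frac{h^2}{\pi^2-4C_\mu^2}+\frac{2}{\tilde{\sigma}_kk}\bigr)\pi^{-p}$, leaving $|E_1^h|_1$ (resp. $\|E_1^h\|$) as the last term. Equality when $f_{\mathrm{high}}=0$ is clear: then $\hat f$ vanishes above $N\pi$, so $e_2^h=0$, and every aliased argument $s\xi+2mN\pi$ exceeds $N\pi$, so $E_2^h=0$ and $e^h=E_1^h$.

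The second block is the estimate of the low-frequency operator error $E_1^h$. The upper bounds come directly from \eqref{E1h} and \eqref{E1hf1} together with the $L^2$ identity in the remark after Lemma~\ref{lemevan}, by setting $C(k,h):=\max_{\xi}\psi(\xi)$ and $C_e(k,h):=\max_{\xi}\psi_e(\xi)$; attainability is Remark~\ref{remsharp}, choosing $f_{\mathrm{low}}$ proportional to the single mode $\sin(\xi_*x)$ at the maximising frequency. For the lower bounds I would restrict the Parseval sums $|E_1^h|_1^2=\frac12\sum\psi(\xi)^2|\hat f(\xi)|^2$ and $\|E_1^h\|^2=\frac12\sum\psi_e(\xi)^2|\hat f(\xi)|^2$ to the evanescent band $\xi\ge\lceil k^h/\pi\rceil\pi$. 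Using $\psi(\xi)=\xi\,\psi_e(\xi)$ and the convexity of $\psi_e$ on $(k,\pi/h]$ with minimum $\psi_e(\xi_e)>h^2/18$ from Lemma~\ref{lemevan}, every evanescent mode satisfies $\psi_e(\xi)>h^2/18$; factoring this out and recognising the remaining sums as $|f_{\mathrm{low,e}}|_1^2$ and $\|f_{\mathrm{low,e}}\|^2$ delivers the two lower bounds.

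The orders of $C(k,h)$ and $C_e(k,h)$ follow from the explicit two-sided bounds in Lemmas~\ref{lem3ptmax} and \ref{lemevan} and Remark~\ref{remorder}, and here lies the main obstacle: one must decide which of the five candidate modes realises each maximum in each regime. The quantity $C_e$ is the benign one: its four propagating candidates are of order $k^2h^2$ (two-sided) while $\psi_e(\xi_{\max})$ and $\psi_e(\xi_e)$ are only of order $h^2$, so for $k>2\pi$ the propagating modes always dominate and, as long as $k^3h^2<C_1$, one gets $c_ek^3h^2<kC_e(k,h)<\tilde{C}_ek^3h^2$ cleanly. The delicate case is $C(k,h)$: its propagating candidates are of order $k^3h^2$ whereas the corner mode $\psi(\xi_{\max})$ is only of order $h$, and $k^3h^2$ exceeds $h$ precisely when $k^3h$ is bounded below. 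The lower bound $C(k,h)\ge\psi(k_-)>ck^3h^2$ is unconditional, but the matching upper bound $C(k,h)<\tilde{C}k^3h^2$ requires the propagating modes to dominate $\psi(\xi_{\max})$, which is exactly the pollution scaling in which $k^3h^2$ is held bounded while $k\to\infty$, forcing $k^3h=k^{3/2}\sqrt{k^3h^2}\to\infty$; this is also the reason the $\min$ with the cleaner $C_e$-bound is retained to cover the complementary behaviour. Finally, the regime $k^3h^2>C_2$ is handled by the first part of Remark~\ref{remorder}: there the lower bound on $\psi(k_-)$ stays above a positive constant, so $C(k,h)\ge\psi(k_-)$ is bounded below, giving the stated dichotomy.
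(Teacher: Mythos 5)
Your proposal is correct and follows essentially the same route as the paper, whose own proof is just the one-line instruction to combine \eqref{errpart}, \eqref{downerr}, Lemma~\ref{lemaliasH1}, \eqref{3ptE1E2}, \eqref{E1h}, Lemma~\ref{lem3ptmax}, \eqref{E1hf1} and Lemma~\ref{lemevan}; you simply execute that combination, correctly supplying the details the paper leaves implicit (the lower bounds via $\psi(\xi)=\xi\,\psi_e(\xi)$ together with $\psi_e\ge\psi_e(\xi_e)>h^2/18$ on the evanescent band, and the candidate-by-candidate case analysis for the orders of $C$ and $C_e$). You also rightly flag that the upper bound $C(k,h)<\tilde{C}k^3h^2$ holds only when the propagating candidates dominate $\psi(\xi_{\max})\sim h$, i.e.\ when $k^3h$ is bounded below --- a caveat the theorem statement glosses over but which the surrounding text acknowledges and which the retained $\min$ with the $C_e$-bound is designed to cover.
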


\begin{remark}
  Here $C_e(k,h)$ is multiplied with $k$ because $|E_1^h|_1\le C_e(k,h)|f_{\mathrm{low}}|_1$ holds
  with equality when $f_{\mathrm{low}}=\sin(\xi x)$ for some $\xi\in\pi\mathbb{N}$ satisfying
  $|\xi-k|< \pi$ and in this case $|f_{\mathrm{low}}|_1$ is of order $k$. {Note that the upper
    bound is attained when $f$ is chosen dependently on $k$ and as a result $|f|_p=O(k^p)$.}
\end{remark}

\begin{proof}
  The proof is obtained by combining \eqref{errpart}, \eqref{downerr}, \eqref{downerrL2},
  Lemma~\ref{lemaliasH1}, Lemma~\ref{lemaliasL2}, \eqref{3ptE1E2}, \eqref{E1h},
  Lemma~\ref{lem3ptmax}, \eqref{E1hf1} and Lemma~\ref{lemevan}.
\end{proof}

\subsection{Relative error estimates of the classical 3-point centered scheme}

To estimate the relative error, we recall the solution $u$ of \eqref{1dhelm} and the solution $u^h$
of \eqref{1d3pt} in case (i), $g_0=g_1=0$, are
\[
  u(x) = \sum_{\xi\in\pi\mathbb{N}}\frac{\hat{f}(\xi)}{k^2-\xi^2}\sin(\xi x),\quad %
  u^h(x) = \sum_{(\xi/\pi)=1}^{N-1}\frac{\widehat{f^h}(\xi)}{k^2-\frac{4}{h^2}\sin^2\frac{\xi h}{2}}
  \sin(\xi x),
\]
\[
  \widehat{f^h}(\xi) =\hat{f}(\xi)+\sum_{s=\pm1}s\sum_{m=1}^{\infty}\hat{f}(s\xi+2mN\pi),%
  \text{ for }\xi\in\{1,\ldots,N-1\}\pi.
\]
We have already shown that $f_{\mathrm{high}}=\sum_{(\xi/\pi)=N+1}^{\infty}\hat{f}(\xi)\sin(\xi x)$
causes the downsampling error \eqref{downerr} and aliasing error \eqref{E2}. Let us see how they
behave in the relative sense.

\begin{lemma}\label{lemdownrel}
  Suppose $k>\frac{4\pi}{\pi-\sqrt{16-\pi^2}}\approx 18.88$, $0<\frac{kh}{2}\le C_\mu<1$. 
  Then, for 
  $p\ge 1$, the downsampling error
  $e_2^h$ in \eqref{errpart} satisfies
  \[
    \frac{\|e_2^h\|}{\|u\|}< \frac{\HZ{(kh)^p}}{\pi^{p}}\frac{\HZ{k^{-p}}|f_{\mathrm{high}}|_p}{\|f_{\mathrm{low}}\|},\quad %
    \frac{|e_2^h|_1}{|u|_1}< \frac{\HZ{(kh)^{p-1}}}{\pi^{p-1}}%
    \frac{\HZ{k^{-p}}|f_{\mathrm{high}}|_p}{\HZ{k^{-1}}|f_{\mathrm{low}}|_1}.
  \]
\end{lemma}

\begin{remark}
  Recall from Remark~\ref{fpartsconv} that as $N\to \infty$ we have $f_{\mathrm{high}}\to 0$ and
  $f_{\mathrm{low}}\to f$ for a fixed $f\in H^p_0(0,1)$. So the relative errors converge faster than
  the displayed powers of $h$.  {But if $|f_{\mathrm{high}}|_p=O(k^p)$ and $kh=O(1)$, then
    Lemma~\ref{lemdownrel} says the relative error is bounded in $L^2$ and $H^1$.}
\end{remark}

\begin{remark}
  Compared to the absolute errors of downsampling \eqref{downerr} and \eqref{downerrL2}, the
  exponents of $h$ here are decreased by two. This is caused by the fact that $f_{\mathrm{low}}$
  depends on $h$ and the bound for $u_{\mathrm{low}}$ becomes sharp only for increasingly
  oscillatory $f$ (see the following proof). If $f$ and $k$ are fixed, then $u$ is fixed and the
  relative error should be of the same order in $h$ as the absolute error is.
\end{remark}

\begin{proof}[Proof of Lemma~\ref{lemdownrel}]
  By Parseval's identity, we have
  \[
    \frac{\|e_2^h\|^2}{\|u\|^2}=%
    \frac{\sum_{(\xi/\pi)=N}^\infty\frac{1}{|k^2-\xi^2|^2}|\hat{f}(\xi)|^2}%
    {\left(\sum_{(\xi/\pi)=1}^{N-1}+\sum_{(\xi/\pi)=N}^\infty\right)%
      \frac{1}{|k^2-\xi^2|^2}|\hat{f}(\xi)|^2}=\frac{\|e_2^h\|^2}{\|u_{\mathrm{low}}\|^2+\|e_2^h\|^2}.
  \]
  Note that
  \[
    \|e_2^h\|\le \max_{\xi\ge N\pi}\frac{1}{|k^2-\xi^2|\cdot|\xi|^{p}} |f_{\mathrm{high}}|_p,%
    \quad \|u_{\mathrm{low}}\|\ge \min_{\xi\in\{1,..,N-1\}\pi}\frac{1}{|k^2-\xi^2|}\|f_{\mathrm{low}}\|.
  \]
  We consider the case of $(N-1)^2\pi^2-k^2>k^2-\pi^2>0$. This is guaranteed by
  $\frac{2}{h}\!>\!k\!>\!\frac{4\pi}{\pi-\sqrt{16-\pi^2}}$:
  \[
    (N\!-\!1)^2\pi^2\!-\!k^2\!>\!k^2\!-\!\pi^2\Leftrightarrow ((N\!-\!1)^2\!+\!1)\pi^2 %
    \!>\!2k^2\Leftarrow ((1\!-\!h)^2\!+\!h^2)\pi^2\!>\!8 \Leftarrow
    h<\frac{2}{k}\!<\!\frac{\pi\!-\!\sqrt{{16}\!-\!\pi^2}}{2\pi}.
  \]
  In this case, we have
  \[
    \min_{\xi\in\{1,..,N-1\}\pi}\frac{1}{|k^2-\xi^2|} = \frac{1}{(N-1)^2\pi^2-k^2},\quad %
    \max_{\xi\ge N\pi}\frac{1}{|k^2-\xi^2|\cdot|\xi|^{p}}=\frac{1}{N^2\pi^2-k^2}\frac{1}{N^p\pi^p}.
  \]
  Therefore, it holds that
  \[
    \frac{\|e_2^h\|}{\|u\|}\le \frac{\|e_2^h\|}{\|u_{\mathrm{low}}\|}\le
    \frac{(N-1)^2\pi^2-k^2}{N^2\pi^2-k^2}\frac{1}{N^p\pi^p}\frac{|f_{\mathrm{high}}|_p}{\|f_{\mathrm{low}}\|}<
    \frac{h^{p}}{\pi^{p}}\frac{|f_{\mathrm{high}}|_p}{\|f_{\mathrm{low}}\|}.
  \]
  For the $H^1$-semi-norm of $e_2^h$, we note that
  \[
    |e_2^h|_1\le \max_{\xi\ge N\pi}\frac{1}{|k^2-\xi^2|\cdot|\xi|^{p-1}} |f_{\mathrm{high}}|_p,%
    \quad |u_{\mathrm{low}}|_1\ge \min_{\xi\in\{1,..,N-1\}\pi}\frac{1}{|k^2-\xi^2|}|f_{\mathrm{low}}|_1.
  \]
  The estimation of the relative error is similar to the previous one in the $L^2$-norm.
\end{proof}

\begin{lemma}\label{lemaliasrel}
  Suppose $k>\frac{4\pi}{\pi-\sqrt{16-\pi^2}}\approx 18.88$, $0<\frac{kh}{2}\le C_\mu<1$, and there
  exists a sequence of $h=\frac{1}{N}$ going to zero such that
  \[
    \tilde{\sigma}_k:=\min_{\xi,h}\left|k-\frac{2}{h}\sin\frac{\xi h}{2}\right|>0\; \text{over
      $\xi\in\{1,\ldots,N-1\}\pi$ and the sequence of $h$ going to zero}.
  \]
  Then, for $\lambda^h=k^2-\frac{4}{h^2}\sin^2\frac{\xi h}{2}$ and $p\ge 2$, the aliasing error
  $E_2^h$ in \eqref{3ptE10E20} satisfies
  \[
    \frac{\|E_2^h\|}{\|u\|}< \frac{2\HZ{k(kh)^{p-2}}}{\tilde{\sigma}_k\pi^{p-2}}%
    \frac{\HZ{k^{-p}}|f_{\mathrm{high}}|_p}{\|f_{\mathrm{low}}\|},\quad %
    \frac{|E_2^h|_1}{|u|_1}< \frac{\HZ{k(kh)^{p-2}}}{\tilde{\sigma}_k\pi^{p-3}}%
    \frac{\HZ{k^{-p}}|f_{\mathrm{high}}|_p}{\HZ{k^{-1}}|f_{\mathrm{low}}|_1}.
  \]
\end{lemma}

\begin{proof}
  We invoke Lemma~\ref{lemaliasL2} and Lemma~\ref{lemaliasH1} for bounds of $E_2^h$, and recall from
  the proof of Lemma~\ref{lemdownrel} that
  \[
    \|u_{\mathrm{low}}\|\ge \min_{\xi\in\{1,..,N-1\}\pi}\frac{1}{|k^2-\xi^2|}\|f_{\mathrm{low}}\|,\quad%
    |u_{\mathrm{low}}|_1\ge \min_{\xi\in\{1,..,N-1\}\pi}\frac{1}{|k^2-\xi^2|}|f_{\mathrm{low}}|_1,
  \]
  \[
    \min_{\xi\in\{1,..,N-1\}\pi}\frac{1}{|k^2-\xi^2|} = \frac{1}{(N-1)^2\pi^2-k^2}.
  \]
  Combining all this gives the upper bounds for $\|E_2^h\|/\|u_{\mathrm{low}}\|\!\ge\!\|E_2^h\|/\|u\|$
  and $|E_2^h|_1/|u_{\mathrm{low}}|_1\!\ge\!|E_2^h|_1/|u|_1$.
\end{proof}

Now we start estimating the relative error due to operator discretization. That is, for $E_1^h$ in
\eqref{3ptE10E20},
\begin{equation}\label{E1hrel}
  \frac{\|E_1^h\|^2}{\|u\|^2}=\frac{\sum_{(\xi/\pi)=1}^{N-1}%
    \left|\frac{1}{\lambda}-\frac{1}{\lambda^h}\right|^2|\hat{f}(\xi)|^2}{\|u\|^2}%
  =\sum_{(\xi/\pi)=1}^{N-1}w^h(\xi)\frac{|\lambda(\xi)-\lambda^h(\xi)|^2}{|\xi^p\lambda^h(\xi)|^2},
\end{equation}
where the weight function $w^h(\xi)$ is given by
\begin{equation}\label{wh}
  w^h(\xi):=\frac{|\xi^p\hat{u}(\xi)|^2}{\|u\|^2}=%
  \frac{|\xi^p\hat{f}(\xi)|^2}{|\lambda(\xi)|^2\|u\|^2},\quad %
  \text{ and }\sum_{(\xi/\pi)=1}^{N-1}w^h(\xi)=\frac{|u_{\mathrm{low}}|_p^2}{\|u\|^2}.
\end{equation}
Note that as $h=\frac{1}{N}\to 0$, $|u_{\mathrm{low}}|_p\to |u|_p$. It can be deduced from \eqref{E1hrel} and
\eqref{wh} that
\begin{equation}\label{E1hrelL2max}
  \frac{\|E_1^h\|}{\|u\|}\le \frac{|u_{\mathrm{low}}|_p}{\|u\|} %
  \max_{\xi\in\{1,..,N-1\}\pi}\left|\frac{\lambda-\lambda^h}{\xi^p\lambda^h}\right|,
\end{equation}
which holds with equality if and only if $\hat{f}(\xi)=0$ for
$\xi\in\{\pi,\ldots,(N-1)\pi\}\backslash\{\xi_*\}$ where $\{\xi_*\}$ is the maximizer set of
$|\lambda-\lambda^h|/|\xi^p\lambda^h|$ over $\xi\in\{1,\ldots,N-1\}\pi$. So, if $\xi_*$ does not
change with $N$, then the equality holds for the fixed $f(x)=\sin(\xi_*x)$ and
$|u_{\mathrm{low}}|_p/\|u\|=\xi_*^p$. Our intention is to find, for a given wavenumber $k$, a source
$f$ independent of the mesh size $h=1/N$ (but $f$ may depend on $k$) such that the upper bound is
attainable. We will see that $p=2$ is an appropriate choice.

The relative error in the $H^1$-semi-norm can be analyzed in a similar way. Specifically, we have
\[
  \frac{|E_1^h|_1^2}{|u|_1^2}=\frac{\sum_{(\xi/\pi)=1}^{N-1}%
    \left|\frac{1}{\lambda}-\frac{1}{\lambda^h}\right|^2|\xi\hat{f}(\xi)|^2}{|u|_1^2}%
  =\sum_{(\xi/\pi)=1}^{N-1}\frac{|\xi^{p+1}\hat{f}(\xi)|^2}{|\lambda(\xi)|^2|u|_1^2}\cdot %
  \frac{|\lambda(\xi)-\lambda^h(\xi)|^2}{|\xi^p\lambda^h(\xi)|^2},
\]
which leads to
\begin{equation}\label{E1hrelH1max}
  \frac{|E_1^h|_1}{|u|_1}\le\frac{|u_{\mathrm{low}}|_{p+1}}{|u|_1} %
  \max_{\xi\in\{1,..,N-1\}\pi}\left|\frac{\lambda-\lambda^h}{\xi^p\lambda^h}\right|.
\end{equation}
So for both the $L^2$- and $H^1$-norms it is essential to find the same maximum.

\begin{lemma}\label{maxL2rel}
  Suppose $2\pi<k\not\in \pi\mathbb{N}$, $\frac{kh}{2}\le C_{\mu}\le \frac{3}{4}$ and a sequence of
  $h=\frac{1}{N}<\frac{1}{2\pi}$ satisfying
  \[
    \tilde{\sigma}_k:=\min_{\xi,h}\left|k-\frac{2}{h}\sin\frac{\xi h}{2}\right|>0\; \text{over
      $\xi\in\{1,\ldots,N-1\}\pi$ and the sequence of $h$ going to zero}.
  \]
  Let $\xi_{\max}:=(N-1)\pi$, $k^h:=\frac{2}{h}\arcsin\frac{kh}{2}$, $k_{\pm}^h\in \pi\mathbb{N}$
  such that $k_-^h<k^h<k_+^h$ and $k_+^h-k_-^h=\pi$. Denote
  $\tilde{\sigma}_{\pm}^h:=|k-\frac{2}{h}\sin\frac{k_{\pm}^hh}{2}|$.  Let $\lambda:=k^2-\xi^2$,
  $\lambda^h:=k^2-\frac{4}{h^2}\sin^2\frac{\xi h}{2}$ and
  $\psi_{rel}(\xi):=\left|\frac{\lambda-\lambda^h}{\xi^2\lambda^h}\right|$. Then
  $\max_{\xi\in\{1,..,N-1\}\pi}\psi_{rel}(\xi)$ is attained at one of $k_-^h$, $k_+^h$ and
  $\xi_{\max}$ with
  \[
    \frac{1}{196\tilde{\sigma}_-^h}kh^2\!<\!\psi_{rel}(k_-^h)\!<\!%
    \frac{\pi^3}{48(2+\pi)\tilde{\sigma}_-^h}kh^2,\quad%
    \frac{1}{64\tilde{\sigma}_+^h}kh^2\!<\!\psi_{rel}(k_+^h)\!<\! %
    \frac{3\pi}{32\tilde{\sigma}_+^h}kh^2,\quad
    \frac{9}{100}h^2\!<\!\psi_{rel}(\xi_{\max})\!<\!\frac{2}{3}h^2,
  \]
  where $\pi\ge \tilde{\sigma}_{\pm}^h\ge \tilde{\sigma}_k$.
\end{lemma}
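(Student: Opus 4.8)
The plan is to follow the scheme of Lemmas~\ref{lem3ptmax} and~\ref{lemevan}. First I would pass to the scaled variables $\theta:=\xi h/2$ and $\mu:=kh/2$, for which $\theta_k:=\arcsin\mu=\frac{h}{2}k^h$ marks the discrete wavenumber. A direct computation gives $\lambda-\lambda^h=\frac{4}{h^2}(\sin^2\theta-\theta^2)$, $\lambda^h=\frac{4}{h^2}(\mu^2-\sin^2\theta)$ and $\xi^2=\frac{4}{h^2}\theta^2$, so that
\[
  \psi_{rel}(\xi)=\frac{h^2}{4}\,\phi_{rel}(\theta),\qquad
  \phi_{rel}(\theta):=\frac{\theta^2-\sin^2\theta}{\theta^2\,|\mu^2-\sin^2\theta|}.
\]
The crucial structural point is that, unlike $\phi_e$ in \eqref{psie}, the denominator of $\phi_{rel}$ blows up only at $\theta=\theta_k$ (the discrete wavenumber) and not at $\theta=\mu$; this is precisely why the candidates $k_\pm$ from Lemma~\ref{lemevan} disappear and only $k_\pm^h$ and $\xi_{\max}$ survive.

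Next I would localize the maximum over the grid $\Theta=\{1,\dots,N-1\}\frac{\pi h}{2}$ by treating the two sides of $\theta_k$ separately. On $(0,\theta_k)$ I would write $\phi_{rel}(\theta)=\bigl(1-(\sin\theta/\theta)^2\bigr)\cdot\frac{1}{\mu^2-\sin^2\theta}$; since $\sin\theta/\theta$ is positive and decreasing while $\mu^2-\sin^2\theta$ is positive and decreasing on this interval, $\phi_{rel}$ is a product of two positive increasing functions, hence increasing, so its maximum over grid points below $\theta_k$ is attained at $\theta_-=\frac{h}{2}k_-^h$. On $(\theta_k,\frac{\pi}{2})$ one has $\phi_{rel}(\theta)=\bigl(1-(\sin\theta/\theta)^2\bigr)\cdot\frac{1}{\sin^2\theta-\mu^2}$, a product of an increasing and a decreasing factor; here I would show $\phi_{rel}$ has no interior local maximum, exactly as in \eqref{eqphie} and Appendix~\ref{a:phie}, so that the maximum over grid points above $\theta_k$ occurs at an endpoint, namely $\theta_+=\frac{h}{2}k_+^h$ or $\theta_{\max}$ (for $h$ small enough $k_+^h<\xi_{\max}$ as in Lemma~\ref{lem3ptmax}; otherwise $k_+^h$ is dropped). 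Since $k^h\notin\pi\mathbb{Z}$, every grid point lies strictly on one side of $\theta_k$, and combining the two sides yields the three claimed candidates.

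Finally I would produce the two-sided bounds at the three points. The key identity is $|\mu^2-\sin^2\theta_\pm|=\frac{h^2}{4}\,\tilde\sigma_\pm^h\bigl(k+\tfrac{2}{h}\sin\theta_\pm\bigr)$, where $k+\frac{2}{h}\sin\theta_\pm$ is comparable to $k$ (it lies between $2k-\pi$ and $2k+\pi$ because $|\theta_\pm-\theta_k|=\frac{h}{2}\sigma_\pm^h<\frac{\pi h}{2}$), which turns the denominator factor into $\tilde\sigma_\pm^h$ times a quantity of size $k$. Combined with the elementary bounds $x-\frac{x^3}{6}<\sin x<x-\frac{x^3}{6}+\frac{x^5}{120}$ on $(0,\frac{\pi}{2})$ — which control $\theta^2-\sin^2\theta\approx\frac13\theta^4$ and locate $\theta_\pm$ relative to $\mu$ — this gives the $\tfrac{kh^2}{\tilde\sigma_\pm^h}$-type bounds for $\psi_{rel}(k_\pm^h)$. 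For $\psi_{rel}(\xi_{\max})$ I would reuse the bounds $\frac{9\pi^2}{64}-1<\theta_{\max}^2-\sin^2\theta_{\max}<\frac{\pi^2}{4}-1$ already established in the proof of Lemma~\ref{lem3ptmax}, together with $\sin^2\theta_{\max}-\mu^2=\cos^2\frac{\pi h}{2}-\mu^2$; here the restrictions $C_\mu\le\frac34$ and $h<\frac{1}{2\pi}$ bound $\cos^2\frac{\pi h}{2}-\mu^2$ away from zero by an absolute constant, which is exactly what removes the singular factor $(\cos^2\frac{\pi h}{2}-C_\mu^2)^{-1}$ present in Lemma~\ref{lemevan} and leaves the clean constants $\frac{9}{100}$ and $\frac23$.

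The main obstacle is the no-interior-local-maximum claim for $\phi_{rel}$ on $(\theta_k,\frac{\pi}{2})$: as with $\phi$ and $\phi_e$, this requires a careful sign analysis of $\phi_{rel}'$ rather than a soft monotonicity or convexity argument, since there $\phi_{rel}$ is a product of an increasing and a decreasing factor. The remaining work — the monotonicity on $(0,\theta_k)$ and the Taylor estimates at $\theta_-,\theta_+,\theta_{\max}$ — is routine bookkeeping, the only subtlety being the conversion between the discrete-wavenumber gap $\sigma_\pm^h=|k^h-k_\pm^h|$ and the physical-scale gap $\tilde\sigma_\pm^h=|k-\frac{2}{h}\sin\theta_\pm|$ via the mean value theorem, which also yields $\tilde\sigma_\pm^h\ge\tilde\sigma_k$ and $\tilde\sigma_\pm^h\le\pi$.
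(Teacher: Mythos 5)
Your proposal matches the paper's proof essentially step for step: the same rescaling to $\phi_{rel}(\theta)=\frac{\theta^2-\sin^2\theta}{\theta^2|\mu^2-\sin^2\theta|}$, monotonicity below $\theta_k$ giving $k_-^h$, a no-interior-local-maximum argument above $\theta_k$ giving $k_+^h$ and $\xi_{\max}$, the factorization $|\mu^2-\sin^2\theta_\pm|=(\sin\theta_k-\sin\theta_\pm)(\sin\theta_k+\sin\theta_\pm)$ producing the $\tilde\sigma_\pm^h$ and the $O(k)$ factor, and the use of $C_\mu\le\frac34$, $h<\frac{1}{2\pi}$ to bound $\cos^2\frac{\pi h}{2}-\mu^2$ away from zero at $\theta_{\max}$. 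The one step you defer --- the sign analysis of $\phi_{rel}'$ on $(\theta_k,\frac{\pi}{2})$ --- is indeed carried out in the paper in exactly the style you anticipate (a critical point must satisfy $\mu^2=v(\theta)$ with $v$ increasing, and $\phi_{rel}''>0$ there, so any critical point is a local minimum), and it turns out to need no interval arithmetic, so your plan is sound and complete in outline.
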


\begin{proof}
  Let $\mu:=\frac{kh}{2}$, $\theta:=\frac{\xi h}{2}$. We find
  $\psi_{rel}(\xi)=\frac{h^2}{4}\cdot\frac{\theta^2-\sin^2\theta}{\theta^2|\mu^2-\sin^2\theta|} %
  =:\frac{h^2}{4}\phi_{rel}(\theta)$. When $\sin\theta<\mu$,
  \[
    \phi_{rel}(\theta)=\frac{1-\frac{\sin^2\theta}{\theta^2}}{\mu^2-\sin^2\theta}%
    \quad\text{increases with }\theta\text{ for }0<\theta<\frac{\pi}{2}\text{ and }\sin\theta<\mu.
  \]
  So $k_-^h$ is the unique maximizer of $\psi_{rel}(\xi)$ on $(0,k_-^h]$. When $\sin\theta>\mu$, we
  have
  \[
    \phi_{rel}(\theta)=\frac{\theta^2-\sin^2\theta}{\theta^2(\sin^2\theta-\mu^2)} \quad{\text{has
        no local maximum for }0<\theta_k:=\frac{h}{2}k^h=\arcsin\mu<\theta<\frac{\pi}{2}}.
  \]
  {Indeed, we calculate
  \[
    \theta ^3 \left(\sin ^2\theta-\mu ^2\right)^2\phi_{rel}'(\theta)= -2 \left(\theta ^3 \sin \theta
      \cos \theta-\sin ^4\theta+\mu^2 \sin ^2\theta-\theta \mu^2 \sin \theta \cos \theta\right)
  \]
  and a critical point $\theta_0\in (\arcsin\mu,\frac{\pi}{2})$ of $\phi_{rel}$ is a root of
  \begin{equation}\label{theta0}
    \mu^2=\frac{\theta ^3 \cos \theta-\sin ^3\theta}{\theta  \cos \theta-\sin \theta}
    =:v(\theta).
  \end{equation}
  By calculations, we find
  \[
    (\theta \cos \theta-\sin \theta)^2v'(\theta)= (\theta -\sin \theta) (\theta +\sin \theta)
    \left(\theta \sin ^2\theta+3 \theta \cos ^2\theta-3 \sin \theta \cos\theta\right)>0,
  \]
  because
  $\left(\theta \sin ^2\theta+3 \theta \cos ^2\theta-3 \sin \theta \cos \theta\right)'=4 \sin
  ^2\theta-4 \theta \sin \theta \cos \theta>0$ for $\theta\in(0,\frac{\pi}{2})$. Hence,
  $v'(\theta)>0$ and there is at most one critical point of $\phi_{rel}$. Suppose
  $\theta_0\in (\arcsin\mu,\frac{\pi}{2})$ is a critical point of $\phi_{rel}$. By substitution with
  \eqref{theta0}, we find
  \[
    \theta_0 ^5 \left(\theta_0 ^2-\sin ^2\theta_0\right)\phi_{rel}''(\theta_0)= 2 \sin\theta_0 \cos
    \theta_0 (\tan\theta_0-\theta_0) \left(3 \theta_0 +\theta_0 \tan ^2\theta_0-3 \tan
      \theta_0\right)>0,
  \]
  because
  $\left(3 \theta +\theta \tan ^2\theta-3 \tan\theta\right)'=2 \tan\theta \left(\theta \sec ^2\theta
    -\tan\theta\right)>0$ for $\theta\in(0,\frac{\pi}{2})$. That means $\theta_0$ can only be a
  local minimum.}
  
  It remains to estimate all the candidates $\phi_{rel}(\theta_-)$, $\phi_{rel}(\theta_+)$ and
  $\phi_{rel}(\theta_{\max})$ for $\max\phi_{rel}$, where $\theta_{\pm}:=\frac{k_{\pm}^hh}{2}$ and
  $\theta_{\max}:=\frac{(N-1)\pi h}{2}$, corresponding to $\psi_{rel}(\xi)$ at
  $\xi=k_{\pm}^h, \xi_{\max}$. Note that
  $\frac{h}{2}\pi>\frac{h}{2}|k^h-k_{\pm}^h|=|\theta_k-\theta_{\pm}|\ge
  |\sin\theta_k-\sin\theta_{\pm}|=\frac{h}{2}\left|k-\frac{2}{h}\sin\frac{k_{\pm}^hh}{2}\right|
  =\frac{h}{2}\tilde{\sigma}_{\pm}^h\ge \frac{h}{2}\tilde{\sigma}_k$,
  $k_-^h<k^h=\frac{2}{h}\arcsin\frac{kh}{2}<\frac{\pi}{2}k$, $k^h_->k^h-\pi>k-\pi>\frac{k}{2}$, and
  $k<k_+^h<k^h+\pi<\frac{\pi}{2}k+\frac{1}{2}k<3k$. These are useful in the following.

  First, at $\theta_-$, we have
  \[
    \phi_{rel}(\theta_-)=\frac{\theta_-^2-\sin^2\theta_-}{\theta_-^2(\sin^2\theta_k-\sin^2\theta_-)}
    =\left(1-\frac{\sin\theta_-}{\theta_-}\right)\left(1+\frac{\sin\theta_-}{\theta_-}\right) %
    \frac{1}{\sin\theta_k-\sin\theta_-}\cdot \frac{1}{\sin\theta_k+\sin\theta_-}.
  \]
  Note that $x-\frac{x^3}{6}<\sin x<x-\frac{x^3}{8}$ and $x<\frac{\pi}{2}\sin x$ for
  $x\in(0,\frac{\pi}{2})$. We get
  \[
    \phi_{rel}(\theta_-)<\frac{1}{24}(k_-^h)^2h^2\cdot 2\cdot\frac{2}{h\tilde{\sigma}_-^h}%
    \cdot \frac{\pi}{(k+k_-^h)h}<\frac{\pi^3}{12(2+\pi)\tilde{\sigma}_-^h}k,
  \]
  \[
    \phi_{rel}(\theta_-)>\frac{1}{32}(k_-^h)^2h^2\cdot 1\cdot\frac{2}{h\tilde{\sigma}_-^h}%
    \cdot \frac{2}{h(k+k_-^h)}>\frac{1}{48\tilde{\sigma}_-^h}k.
  \]

  Second, at $\theta_+$, we have
  \[
    \phi_{rel}(\theta_+)=\frac{\theta_+^2-\sin^2\theta_+}{\theta_+^2(\sin^2\theta_+-\sin^2\theta_k)}
    =\left(1-\frac{\sin\theta_+}{\theta_+}\right)\left(1+\frac{\sin\theta_+}{\theta_+}\right) %
    \frac{1}{\sin\theta_+-\sin\theta_k}\cdot \frac{1}{\sin\theta_k+\sin\theta_+}.
  \]
  Similar to the estimate of $\phi_{rel}(\theta_-)$, we get
  \[
    \phi_{rel}(\theta_+)<\frac{1}{24}(k_+^h)^2h^2\cdot 2\cdot\frac{2}{h\tilde{\sigma}_+^h}%
    \cdot \frac{\pi}{(k+k_+^h)h}<\frac{3\pi}{8\tilde{\sigma}_+^h}k,
  \]
  \[
    \phi_{rel}(\theta_+)>\frac{1}{32}(k_+^h)^2h^2\cdot 1\cdot\frac{2}{h\tilde{\sigma}_+^h}%
    \cdot \frac{2}{h(k+k_+^h)}>\frac{1}{16\tilde{\sigma}_+^h}k.
  \]

  Finally, at $\theta_{\max}=\frac{\pi}{2}(1-h)$, we have
  \[
    \phi_{rel}(\theta_{\max})=\frac{\theta_{\max}^2-\sin^2\theta_{\max}}%
    {\theta_{\max}^2(\sin^2\theta_{\max}-\sin^2\theta_k)}=
    \frac{\frac{\pi^2}{4}(1-h)^2-\cos^2\frac{\pi h}{2}}%
    {\frac{\pi^2}{4}(1-h)^2(\cos^2\frac{\pi h}{2}-\frac{k^2h^2}{4})}.
  \]
  From the assumption $0<h<\frac{1}{2\pi}$, using also $\cos x > 1-\frac{1}{2}x^2$ for
  $x\in(0,\frac{\pi}{2})$ we have
  $\cos^2\frac{\pi h}{2}=\frac{1}{2}(1+\cos(\pi h))>1-\frac{\pi^2h^2}{4}>\frac{15}{16}$. Combining
  this with the assumption $\frac{kh}{2}<\frac{3}{4}$ gives
  \[
    \frac{9}{25}=1-\frac{4}{\pi^2(\frac{3}{\pi}-\frac{1}{2\pi})^2}<\sec^2\frac{\pi
      h}{2}-\frac{4}{\pi^2(1-h)^2}<\phi_{rel}(\theta_{\max})<\frac{1}{\cos^2\frac{\pi
        h}{2}-\frac{k^2h^2}{4}}<\frac{8}{3}.
  \]
  To conclude for $\psi_{rel}$, we need only to multiply $\phi_{rel}$ with $\frac{h^2}{4}$.
\end{proof}

Now we can summarize the relative error.

\begin{theorem}[Relative error with nonzero $f$]\label{relerr}
  Suppose the problem \eqref{1dhelm} has $k>\frac{4\pi}{\pi-\sqrt{16-\pi^2}}\approx 18.88$,
  $\sigma_k:=\min_{\xi\in \pi\mathbb{N}}|k-\xi|>0$, $f\in H_0^p(0,1)$ with $p\ge 2$,
  $g_0=g_1=0$. Suppose $0<\frac{kh}{2}\le C_\mu\le\frac{3}{4}$, and there is a sequence of
  $h=\frac{1}{N}$ going to zero such that
  \[
    \tilde{\sigma}_k:=\min_{\xi,h}\left|k-\frac{2}{h}\sin\frac{\xi h}{2}\right|>0\; \text{over
      $\xi\in\{1,\ldots,N-1\}\pi$ and the sequence of $h$ going to zero}.
  \]
  Let $k^h:=\frac{2}{h}\arcsin\frac{kh}{2}$, $k_{\pm}^h\in \pi\mathbb{N}$ such that
  $k_-^h<k^h<k_+^h$ and $k_+^h-k_-^h=\pi$. Let $f$ be decomposed as in \eqref{flowhigh}. Then the
  finite difference scheme \eqref{1d3pt} has a unique solution $u^h$. Moreover,
  \[
    \frac{\|u-u^h\|}{\|u\|}\le \left(\frac{\HZ{(kh)^{p}}}{\pi^{p}}+
      \frac{2\HZ{k(kh)^{p-2}}}{\tilde{\sigma}_k\pi^{p-2}}\right)\frac{\HZ{k^{-p}}|f_{\mathrm{high}}|_p}{\|f_{\mathrm{low}}\|} %
    + \frac{\|E_1^h\|}{\|u\|}, %
  \]
  \[
    \frac{|u-u^h|_1}{|u|_1}\le\left(\frac{\HZ{k(kh)^{p-1}}}{\pi^{p-1}}+%
      \frac{\HZ{k(kh)^{p-2}}}{\tilde{\sigma}_k\pi^{p-2}}\right)\frac{\HZ{k^{-p}}|f_{\mathrm{high}}|_p}{\HZ{k^{-1}}|f_{\mathrm{low}}|_1} +
    \frac{|E_1^h|_1}{|u|_1},
  \]
  with $E_1^h$ given in \eqref{E1hdef}, and equality is attained when $f_{\mathrm{high}}=0$. In
  turn, $E_1^h$ satisfies
  \[
    \frac{\|E_1^h\|}{\|u\|}\le \frac{\HZ{k^{-2}}|u_{\mathrm{low}}|_2}{\|u\|}\HZ{k^2}C_{rel}(k,h),\quad %
    \frac{|E_1^h|_1}{|u|_1}\le \frac{\HZ{k^{-3}}|u_{\mathrm{low}}|_{3}}{\HZ{k^{-1}}|u|_1}\HZ{k^2}C_{rel}(k,h),
  \]
  where the upper bounds are attained when $\hat{f}(\xi)=0$ for all $\xi\in\{1,\ldots,N-1\}\pi$ but
  one of $k_{\pm}^h$, and there exist positive constants $c_{rel}$ and $\tilde{C}_{rel}$ independent
  of $k$, \HZ{$\tilde{\sigma}_{\pm}^h$} and $h$ such that
  \[
    c_{rel}
    \HZ{k^3}h^2\HZ{/\tilde{\sigma}_{k}^h}<\HZ{k^2}C_{rel}(k,h)<\tilde{C}_{rel}\HZ{k^3}h^2\HZ{/\tilde{\sigma}_{k}^h}\HZ{,}
  \]
  \HZ{where $\tilde{\sigma}^h_k=\min\{\tilde{\sigma}_{\pm}^h\}$, and $\tilde{\sigma}_{\pm}^h$ are
    defined in Lemma~\ref{maxL2rel}}.  In particular, when the upper bounds for $E_1^h$ relative to
  $u$ are attained, e.g.  $f(x)=2\sin(k_{\pm}^hx)$, we have
  \[
    u=\frac{f}{k^2-(k_{\pm}^h)^2},\quad%
    \|u\|=\frac{\|f\|}{k^2-(k_{\pm}^h)^2},\quad
    |u|_p=|u_{\mathrm{low}}|_p=\frac{\|f\|(k_{\pm}^h)^p}{k^2-(k_{\pm}^h)^2}, %
  \]
  and hence the exact order of the relative errors are $k^3h^2\HZ{/\tilde{\sigma}_k^h}$, or more
  precisely
  \[
    \frac{\|E_1^h\|}{\|u\|}=\frac{|E_1^h|_1}{|u|_1}= %
    (k_{\pm}^{h})^2C_{rel}(k,h)\in
    (k_{\pm}^{h})^2kh^2\HZ{(\tilde{\sigma}_k^h)^{-1}}(c_{rel},\tilde{C}_{rel}).
  \]
\end{theorem}

\begin{proof}
  The proof is obtained by combining Lemma~\ref{lemdownrel}, Lemma~\ref{lemaliasrel},
  \eqref{E1hrelL2max} \eqref{E1hrelH1max} and Lemma~\ref{maxL2rel}.
\end{proof}

\begin{remark}
  \HZ{Under the assumptions of Lemma~\ref{kkh}, we have $k_-<k<k^h<k_+$ for some
  $k_{\pm}\in\pi\mathbb{N}$ and $k_+-k_-=\pi$. Then $k_{\pm}^h=k_{\pm}$, and for $0<c<1$ from
  Lemma~\ref{kkh}, $k>6\pi$, it holds that
  \[
    \begin{aligned}
      \tilde{\sigma}_-^h&=\left|k-\frac{2}{h}\sin\frac{k^h_-h}{2}\right|\in
      \left(k-k_-,k-k_-+\frac{1}{24}k_-^3h^2\right)\subset(k-k_-)\left(1,1+\frac{c}{3}\right),\\
      \tilde{\sigma}_+^h&=\left|k-\frac{2}{h}\sin\frac{k^h_+h}{2}\right|\in
      \left(k_+-k-\frac{1}{24}k_+^3h^2,k_+-k\right)\subset(k_+-k)\left(1-\frac{7^3c}{6^33},1\right).
    \end{aligned}
  \]
  So $\tilde{\sigma}_k^h=\min\{\tilde{\sigma}_{\pm}^h\}$ in Theorem~\ref{relerr} is of the same
  order as $\sigma_k=\min\{|k-k_{\pm}|\}$.}
  
\end{remark}

\section{Visual analysis of dispersion correction schemes}
\label{visual}

We have already seen from \eqref{udiri}, \eqref{uhdiri} and Section~\ref{secdiri} that for the zero
source problem the discretization error is essentially in the discrete wavenumber $k^h$. This
becomes the motivation of dispersion correction. In 1D, $k^h$ is simply a constant depending on $kh$
for a given linear scheme, and the error in $k^h$ can be completely removed; see
e.g. \cite{babuska1997, ernst2013}.

For the classical 3-point centered scheme \eqref{1d3pt}, one possibility adopted in \cite{ernst2013}
is to modify the wavenumber $k$ to be used in the scheme \eqref{1d3pt} and get the new scheme
\begin{equation}\label{kmod}
  (\tilde{k}^2-\frac{2}{h^2})u^h(x_j) + \frac{1}{h^2}u^h(x_{j-1})+\frac{1}{h^2}u^h(x_{j+1})=%
  f(x_j),\quad u^h(x_0)=g_0,\quad u^h(x_N)=g_1.
\end{equation}
When $f=0$, the solution of \eqref{kmod} is given by \eqref{uhdiri} which is copied here,
\[
  u^h(x_j) = \frac{g_1-g_0\cos k^h}{\sin k^h}\sin(k^hx_j)+g_0\cos(k^hx_j),\quad x_j=jh,\;j=0,1,..,N,
\]
but with $k^h:=\frac{2}{h}\arcsin\frac{\tilde{k}h}{2}$ ($\tilde{k}$ replacing $k$). The idea is to
let $k^h=k$ by well choosing $\tilde{k}$. That is, solve
\[
  \frac{2}{h}\arcsin\frac{\tilde{k}h}{2} = k\quad\text{ to find }\quad%
  \tilde{k}=\frac{2}{h}\sin\frac{kh}{2}.
\]
Then, the discretization error of the new scheme vanishes when $f=0$. An error analysis when
$f\neq 0$ is given by \cite{cocquet2024} using the framework ``consistency + stability $\Rightarrow$
convergence'', which shows the maximum error is bounded from above by
$(\frac{1}{k}\|f''\|_{L^\infty} + k^2\|f\|_{L^\infty})h^2$ up to a constant factor.

Another possibility adopted by \cite{babuska1997} is to scale the discrete Laplacian but retain the
original wavenumber $k$. This amounts to multiply the left hand side (only) of \eqref{kmod} with
$\frac{k^2}{\tilde{k}^2}$ and get
\begin{equation}\label{Lmod}
  \left(k^2-\frac{k^2}{\tilde{k}^2}\frac{2}{h^2}\right)u^h(x_j) + %
  \frac{k^2}{\tilde{k}^2}\frac{1}{h^2}u^h(x_{j-1})+ %
  \frac{k^2}{\tilde{k}^2}\frac{1}{h^2}u^h(x_{j+1})=%
  f(x_j),\quad u^h(x_0)=g_0,\quad u^h(x_N)=g_1.
\end{equation}
When $f=0$, the scheme \eqref{Lmod} is equivalent to \eqref{kmod}. The original work of
\cite{babuska1997} is in the finite element framework with the source treated by integral. The idea
is to ``\emph{eliminate the phase lag $k-k^h$ for as many right-hand sides as possible}'', not only
for $f=0$ but also for piecewise constant $f$.  For a different problem than \eqref{1dhelm} with the
right boundary condition being $u'-\I ku=0$, it was shown in \cite{babuska1997} that the
$H^1$-semi-norm of the error is bounded from above by $h\|f\|_{H^1}$ up to a constant factor.

\begin{remark}
  If we multiply $f$ (only) in \eqref{kmod} with $\frac{\tilde{k}^2}{k^2}$, then we obtain a scheme
  equivalent to \eqref{Lmod}.
\end{remark}

{As we have seen, the 1D dispersion correction is determined up to a constant factor. If we
  follow a third approach proposed by \cite{stolk2016} originally for 2D and 3D problems, then we
  would get
\begin{equation}\label{Lfmod}
  \left(k^2\!-\!\frac{k^2}{\tilde{k}^2}\frac{2}{h^2}\right)u^h(x_j)\!+\!%
  \frac{k^2}{\tilde{k}^2}\frac{1}{h^2}u^h(x_{j-1})\!+\!%
  \frac{k^2}{\tilde{k}^2}\frac{1}{h^2}u^h(x_{j+1})\!=\!%
  \frac{kh}{2}\cot\frac{kh}{2}f(x_j),\; u^h(x_0)\!=\!g_0,\; u^h(x_N)\!=\!g_1.
\end{equation}
}

To evaluate the {three} dispersion free schemes \eqref{kmod}, \eqref{Lmod} {and \eqref{Lfmod}}, we
use the Fourier analysis as a visual tool. Since the downsampling and aliasing errors are smaller
(for smooth $f$) than the operator error, we will focus on the latter. The operator symbols of
\eqref{kmod}, \eqref{Lmod} {and \eqref{Lfmod}, that map $u^h$ to $f^h$,} are
\begin{equation}\label{ops}
  \lambda^h_k\!:=\!\tilde{k}^2\!-\!\frac{4}{h^2}\sin^2\frac{\xi h}{2},\;\;%
  \lambda_{\Delta}^h\!:=\!k^2\!-\!\frac{k^2}{\tilde{k}^2}\frac{4}{h^2}\sin^2\frac{\xi h}{2},\;\;%
  {\lambda_{\Delta,f}^h\!:=\!\frac{2}{kh}\tan\frac{kh}{2}\lambda_{\Delta}^h,}\;\;%
  \xi\!\in\!\{1,\ldots,N\!-\!1\}\pi\text{ for }h\!=\!\frac{1}{N}.
\end{equation}
Recall that the continuous symbol is $\lambda=k^2-\xi^2$, and the symbol errors are defined as
\begin{equation}\label{symerr}
  \psi:=\left|\frac{\xi}{\lambda}-\frac{\xi}{\lambda^h}\right|,\quad%
  \psi_e:=\left|\frac{1}{\lambda}-\frac{1}{\lambda^h}\right|,\quad%
  \psi_{rel}:=\left|\frac{\lambda-\lambda^h}{\xi^2\lambda^h}\right|,\quad%
  \xi\in\{1,\ldots,N-1\}\pi\text{ for }h=\frac{1}{N},
\end{equation}
where $\lambda^h$ is the discrete symbol. For the classical centered scheme,
$\lambda^h=k^2-\frac{4}{h^2}\sin^2\frac{\xi h}{2}$. For the scheme \eqref{kmod}, \eqref{Lmod} or
{\eqref{Lfmod}}, it is $\lambda^h_k$, $\lambda_{\Delta}^h$ {or $\lambda_{\Delta,f}^h$} in
\eqref{ops}. From \eqref{E1h}, \eqref{E1hf1}, \eqref{E1hrelL2max} and \eqref{E1hrelH1max}, it is
essentially the symbol error $\psi$, $\psi_e$ or $\psi_{rel}$ that determines the convergence in
terms of absolute/relative error in the $L^2$- or $H^1$-semi-norm.

Figure~\ref{href} shows the symbol errors $\psi_e$ \eqref{symerr} for the {four} schemes under
$h$-refinement. We see that the dispersion free schemes have lower maxima of $\psi_e$ than the
classical one, at the price of larger $\psi_e$ at smaller $\xi$ (roughly $<\frac{5}{6}k$ or
$<\frac{2}{3}k$). We also see that the scheme \eqref{Lfmod} modifying both the discrete Laplacian
and the source performs the best. The order of $\max\psi_e$ is $h^2$, which corroborates
Lemma~\ref{lemevan}.

\begin{figure}
  \centering
  \includegraphics[scale=.3,trim=5 0 0 0,clip]{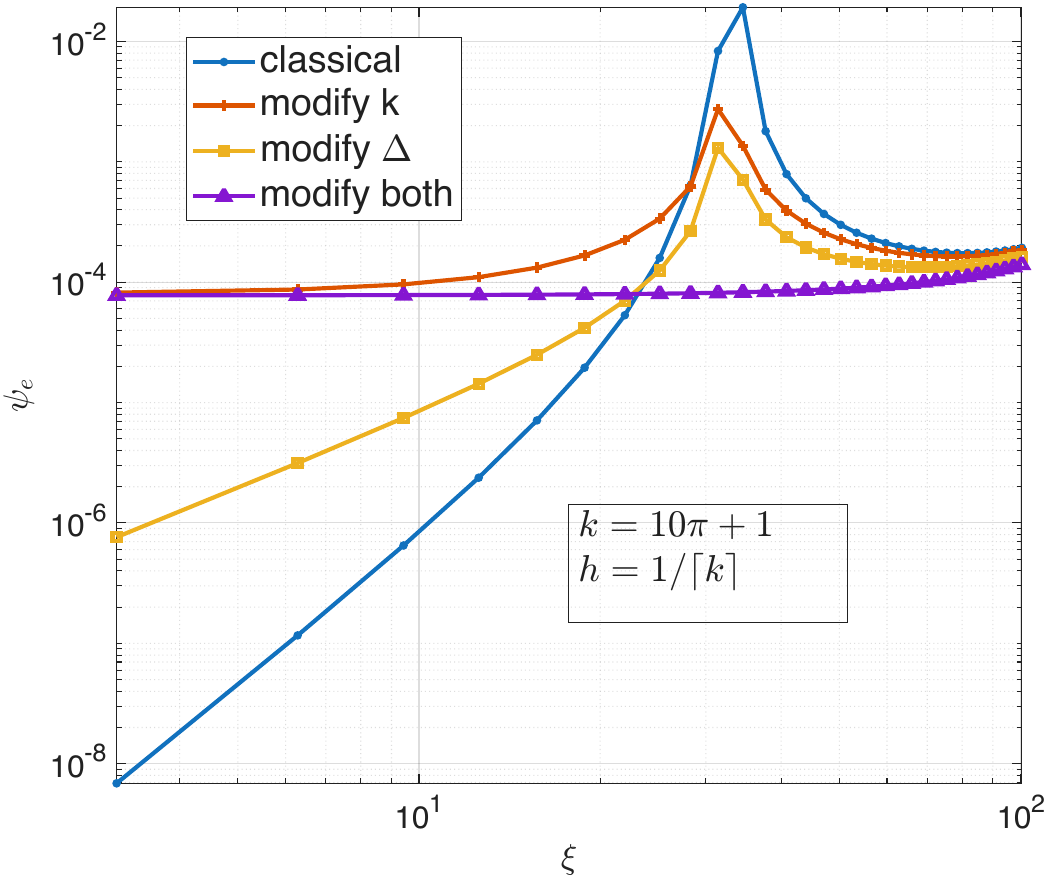}
  \includegraphics[scale=.3,trim=5 0 0 0,clip]{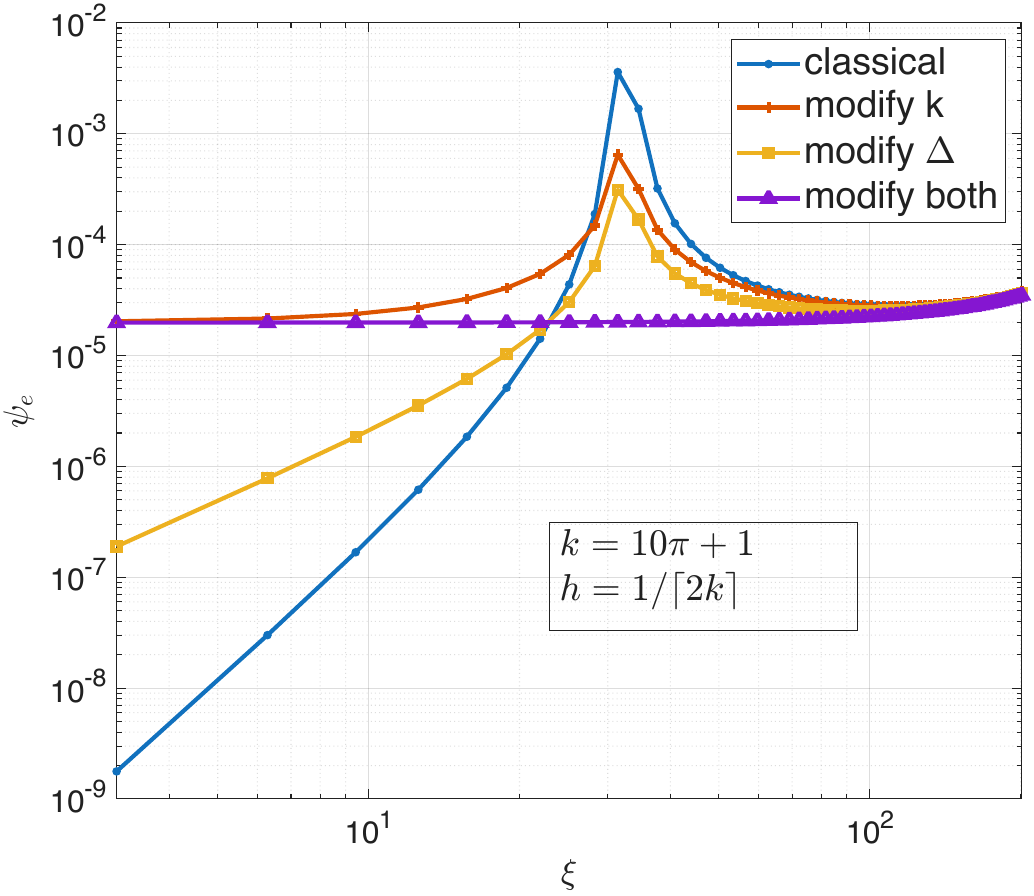}
  \includegraphics[scale=.3,trim=6 0 0 0,clip]{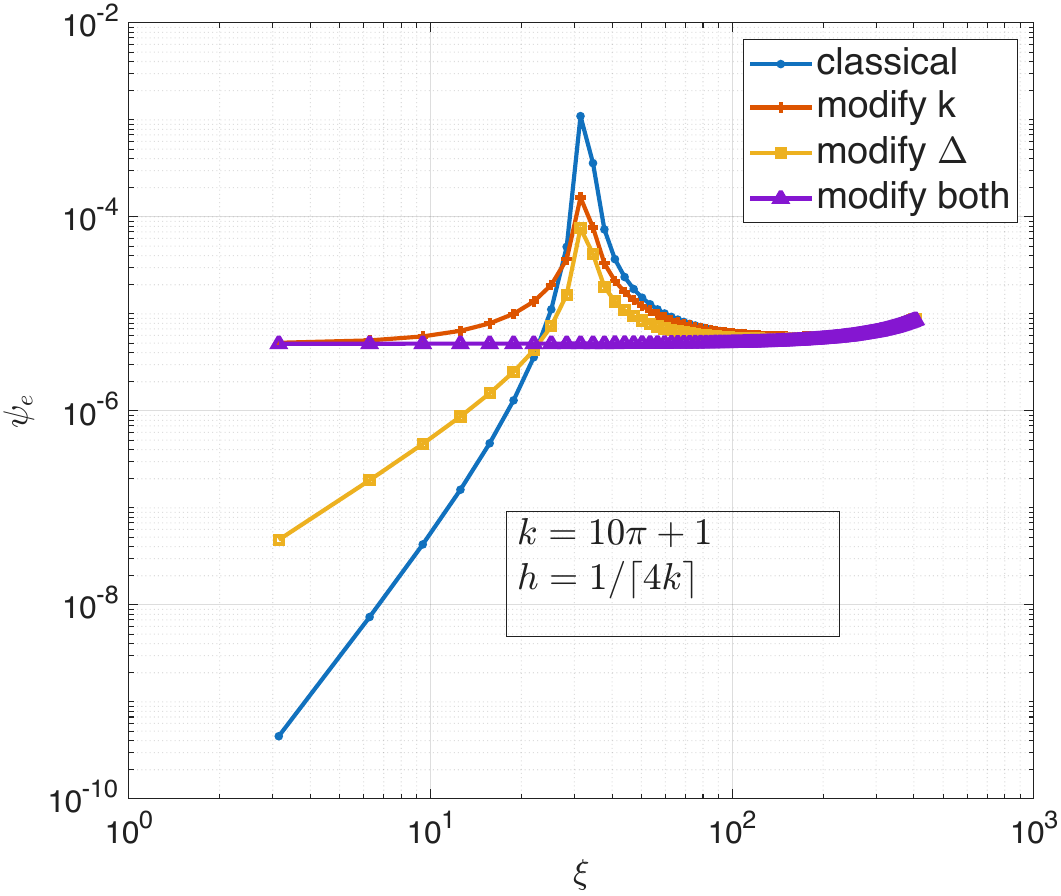}\\%
  \includegraphics[scale=.3,trim=5 0 0 0,clip]{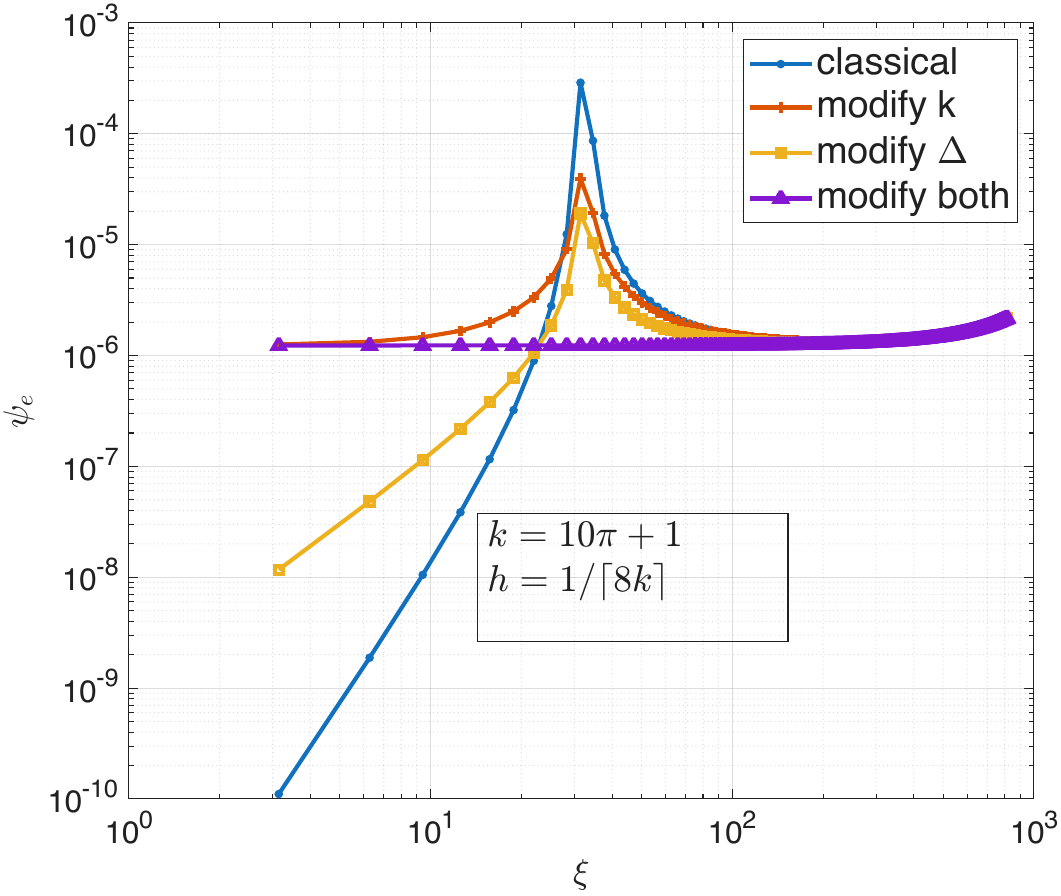}
  \includegraphics[scale=.3,trim=5 0 0 0,clip]{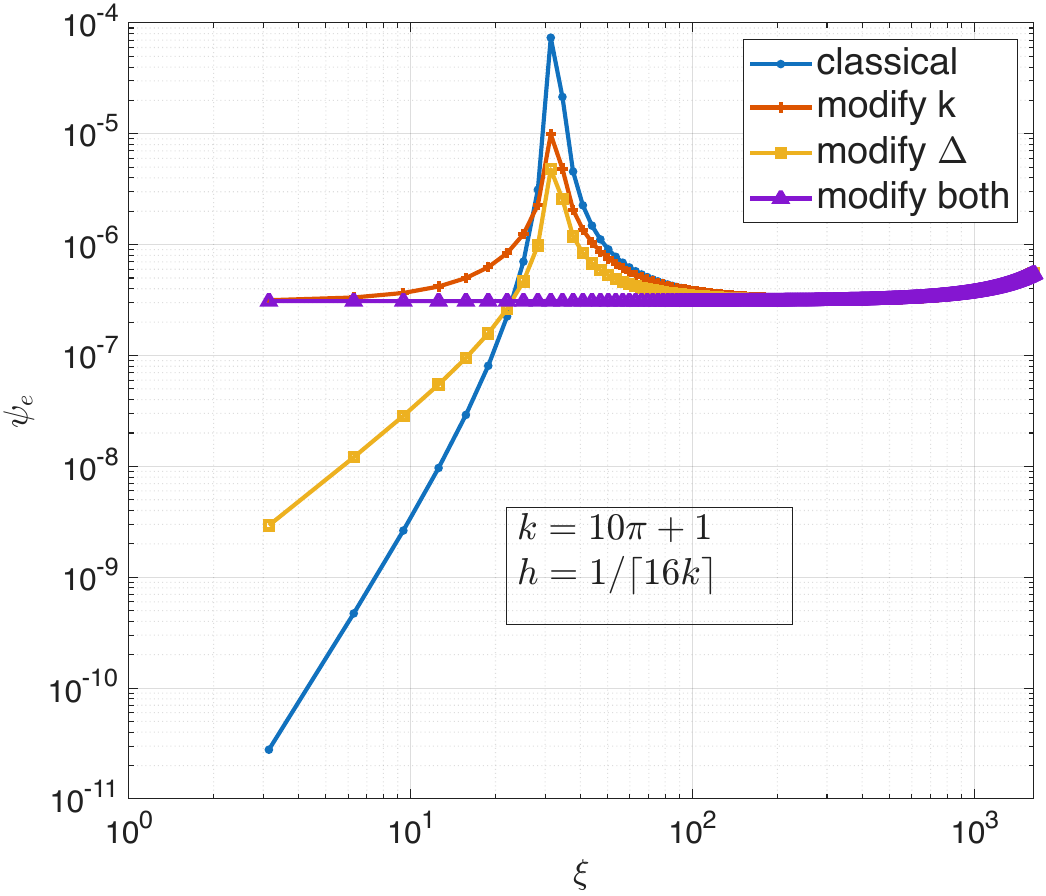}
  \includegraphics[scale=.3,trim=49 180 75 180,clip]{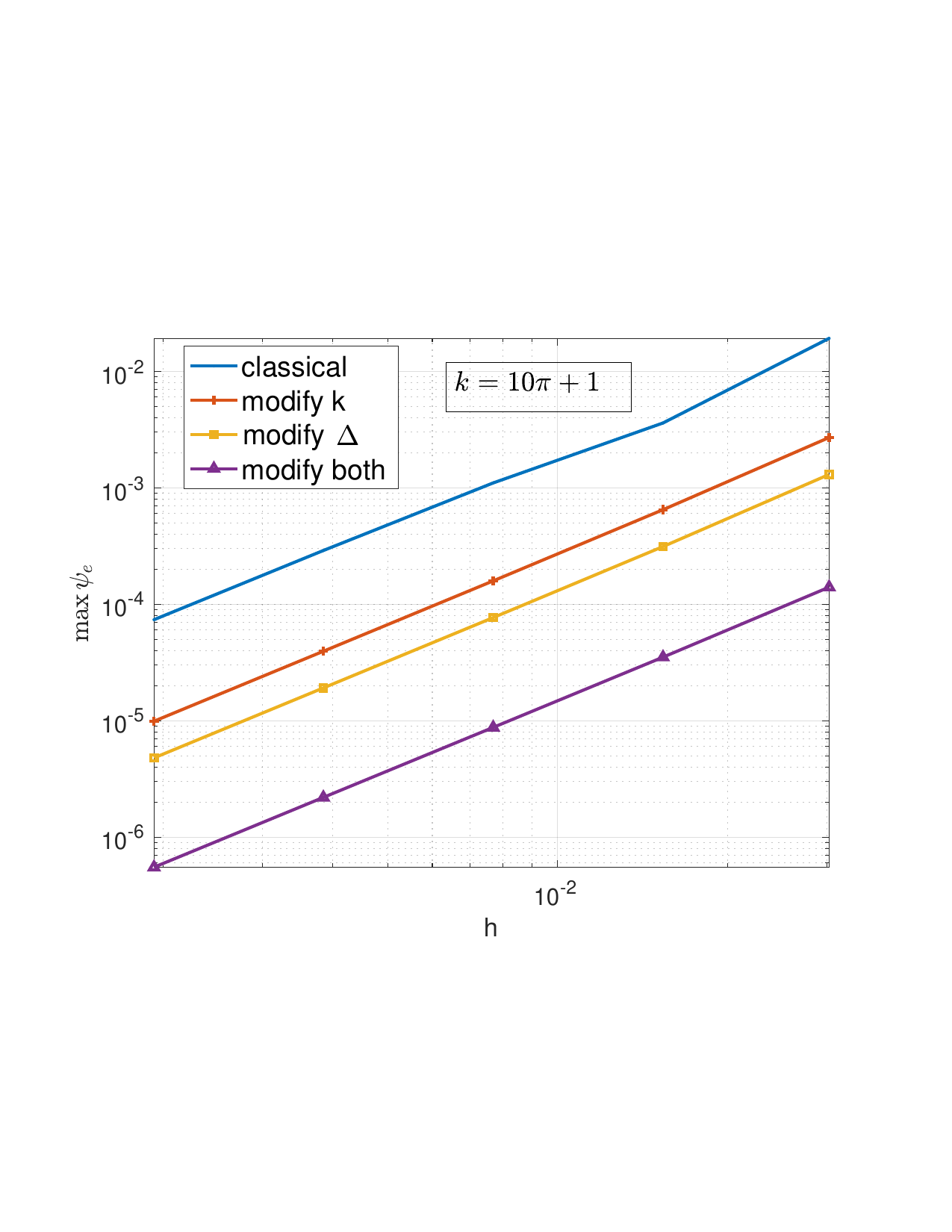}
  \caption{\small Symbol errors $\psi_e$ \eqref{symerr} for the classical \eqref{1d3pt} \&
    dispersion free \eqref{kmod}, \eqref{Lmod}, {\eqref{Lfmod}}: $h$-refinement}\label{href}
\end{figure}

Next, the influence of the wavenumber $k$ is demonstrated, see Figure~\ref{khref} in which $k$ is
quadrupled while $kh$ is halved. This clearly shows that the classical scheme can not converge in
this setting with $k^3h^2$ fixed. This corresponds to the order $k^3h^2$ proved in
Lemma~\ref{lem3ptmax}. Figure~\ref{khref} shows also that the schemes \eqref{kmod} and \eqref{Lmod}
have the symbol error of order $k^2h^2$ which translates to the same order of $|u-u^h|_1$ by
\eqref{E1h}{, while the scheme \eqref{Lfmod} is even better and of} \footnote{{This is because the
  order in $h$ for fixed $k$ must be 2, and it is observed that the scaling $k\to 4k$, $h\to h/8$
  makes the error scaled by $4^3/8^3=2^{-3}$, so that $4^{\alpha}/8^2=2^{-3}$ gives the right
  exponent $\alpha=\frac{3}{2}$ for $k$.\label{ft2}}} {order $k^{3/2}h^2$}.

\begin{figure}
  \centering
  \includegraphics[scale=.3,trim=5 0 2 0,clip]{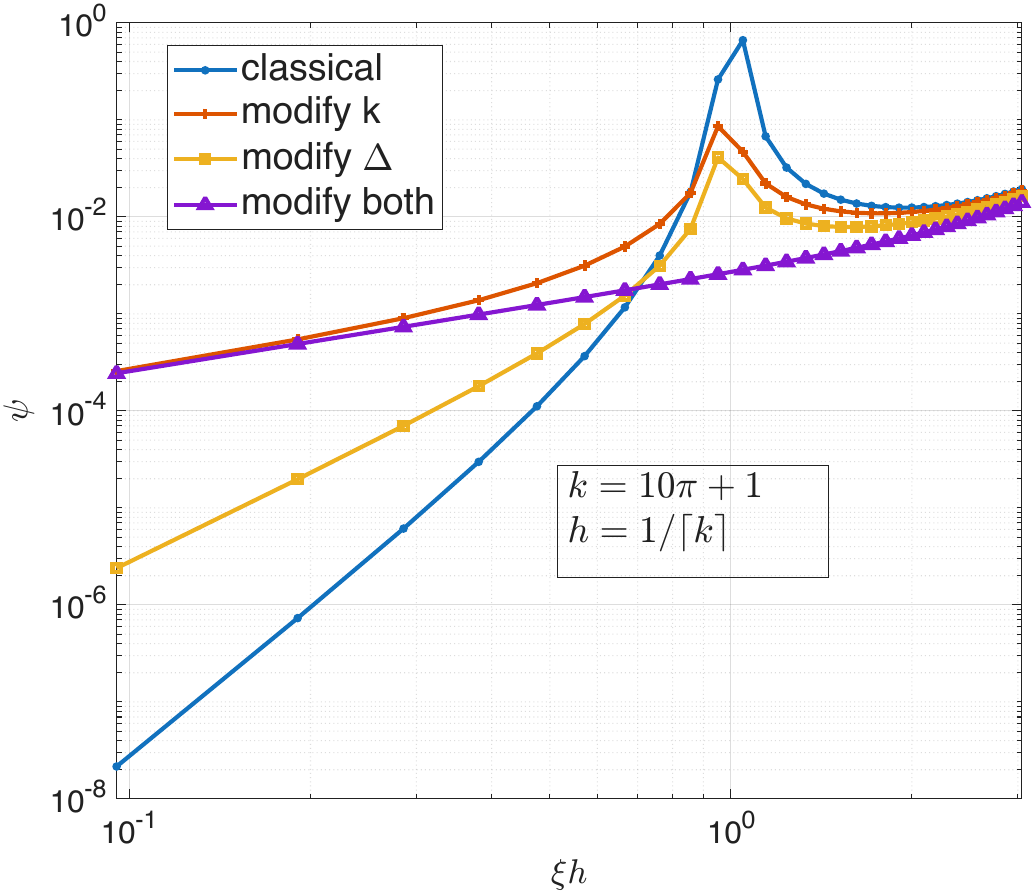}
  \includegraphics[scale=.3,trim=5 0 2 0,clip]{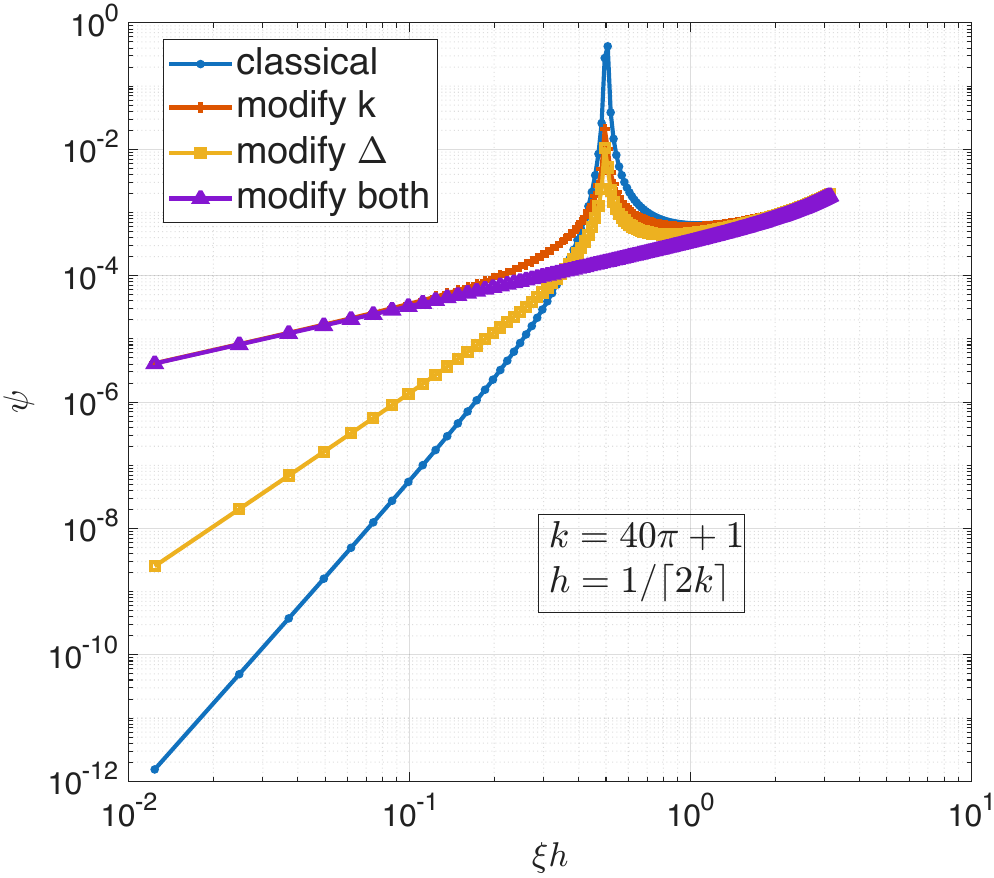}
  \includegraphics[scale=.3,trim=5 0 2 0,clip]{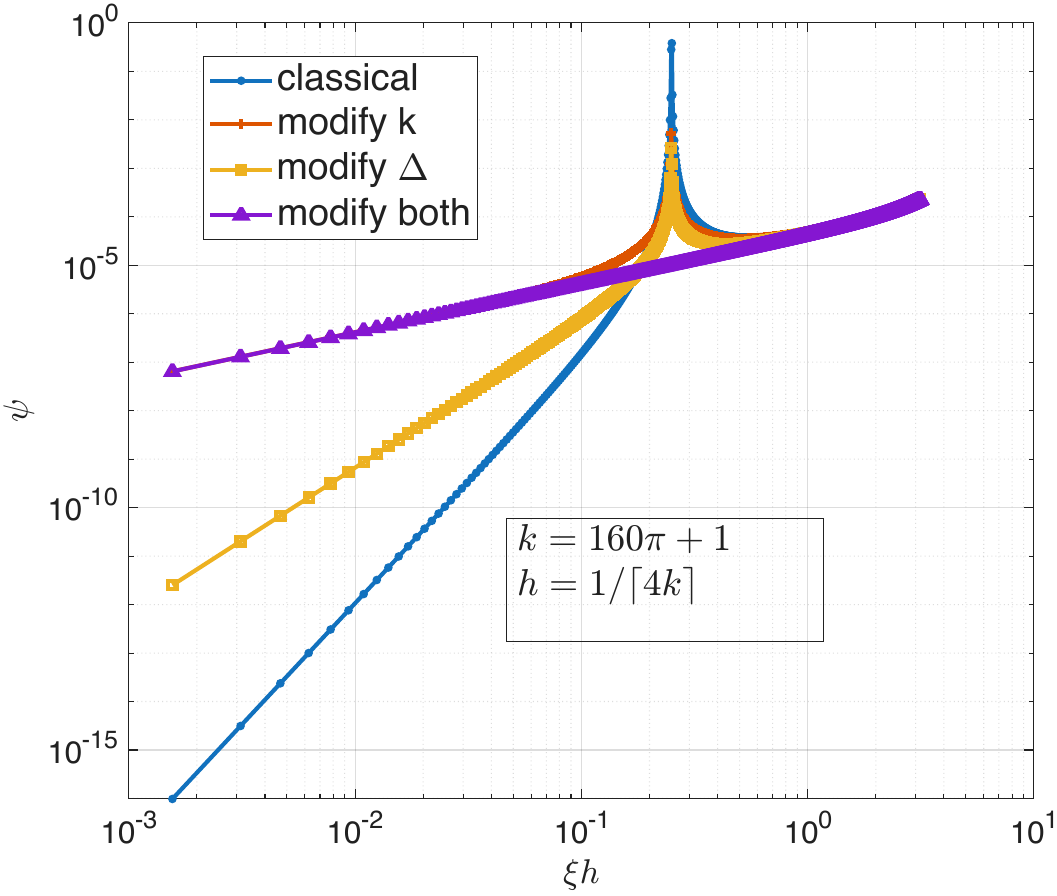}
  \includegraphics[scale=.3,trim=5 0 50 0,clip]{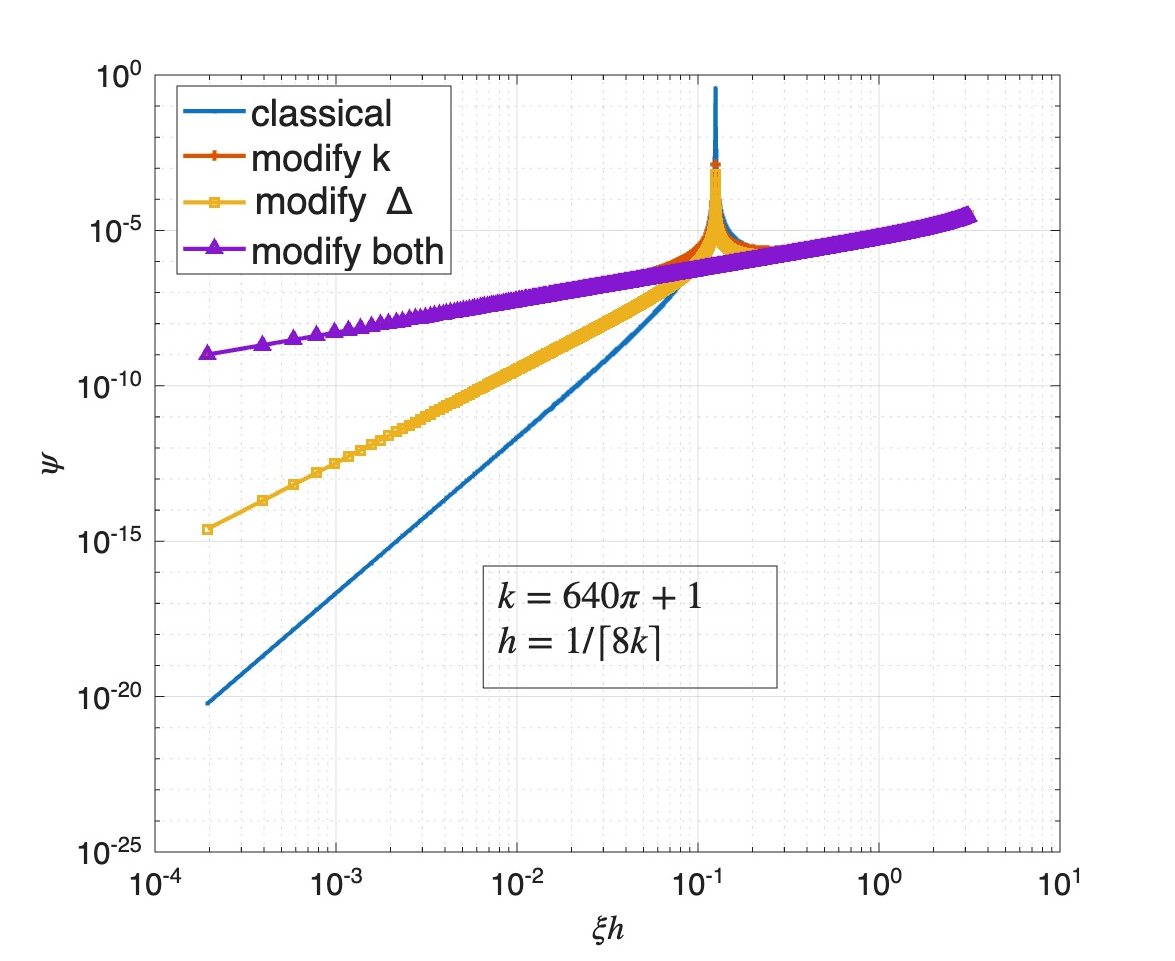}
  \includegraphics[scale=.3,trim=5 0 2 0,clip]{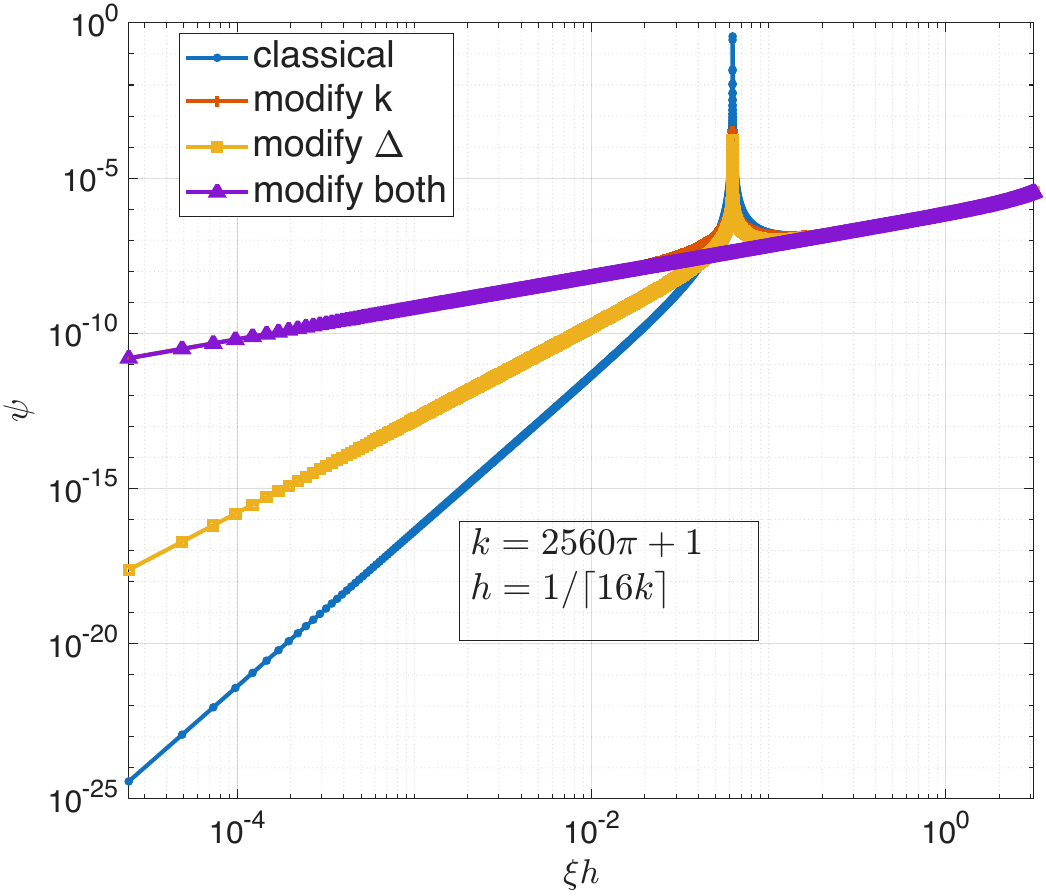}
  \includegraphics[scale=.3,trim=49 180 75 180,clip]{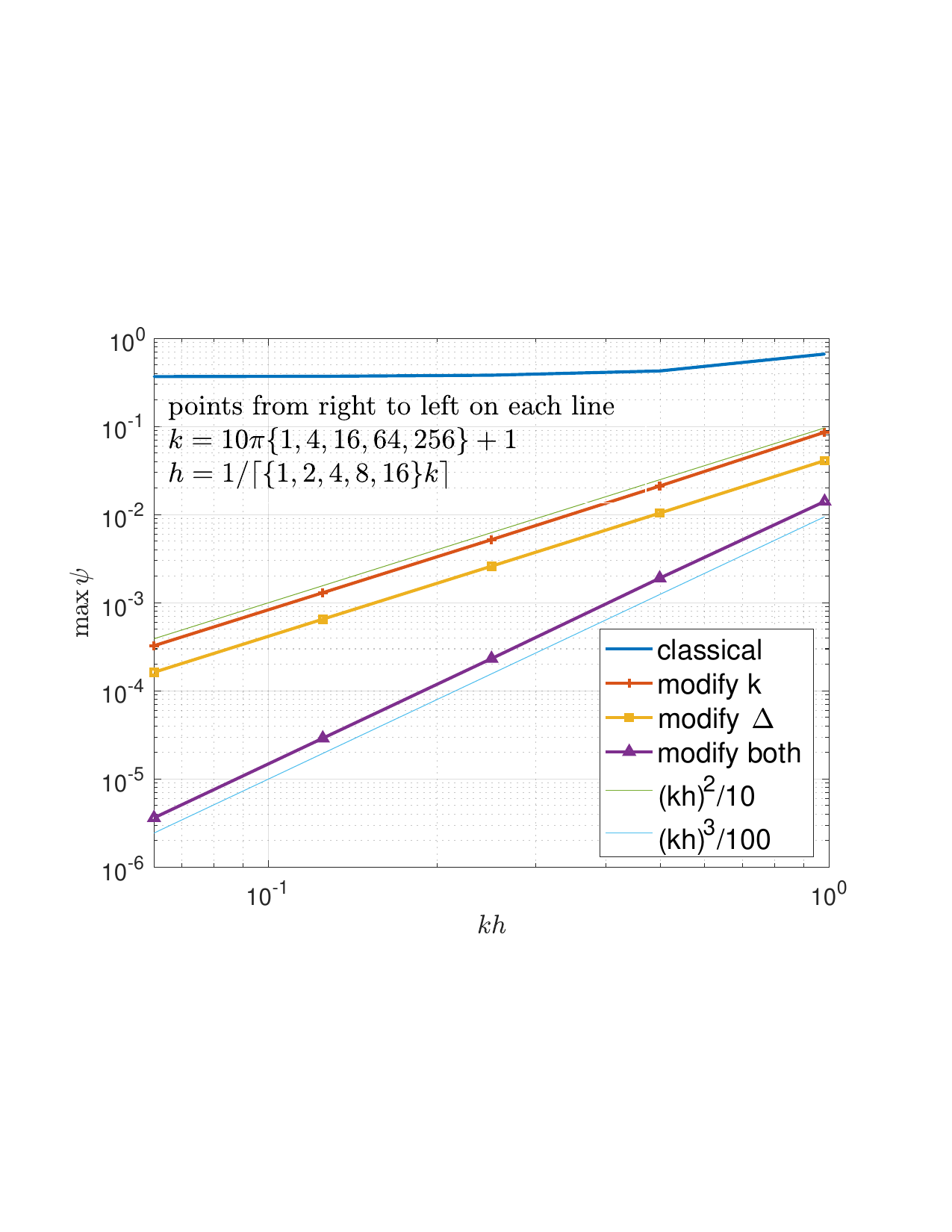}
  \caption{\small Symbol errors $\psi$ \eqref{symerr} for the classical \eqref{1d3pt} \& dispersion
    free \eqref{kmod}, \eqref{Lmod}, {\eqref{Lfmod}}: $kh$-refinement}\label{khref}
\end{figure}

\HZ{Next,} the relative symbol error $\psi_{rel}$ is visualized in Figure~\ref{khfixed} where $kh$
is fixed while $k$ is doubled. According to \eqref{E1hrelH1max}, $\max\psi_{rel}$ has to be
multiplied with $|u_{\mathrm{low}}|_3/|u|_1$ to give the upper bound of $|u-u^h|_1/|u|_1$, and when
the upper bound is attained, $|u_{\mathrm{low}}|_3/|u|_1$ may contribute some powers of $k$ (see
Theorem~\ref{relerr}). For the classical scheme, we see from Figure~\ref{khfixed} that $\psi_{rel}$
is of order $kh^2$ which corroborates Lemma~\ref{maxL2rel}. For the dispersion free schemes
\eqref{kmod} {and \eqref{Lfmod}}, the relative symbol error $\psi_{rel}$ does not decrease with $h$
(while $kh$ is fixed) at the first frequency $\xi=\pi$, which slows down also the convergence at the
other low frequencies. The dispersion free scheme \eqref{Lmod} modifying the discrete Laplacian has
an almost equidistributed $\psi_{rel}$ over $\xi$ as seen in the figure, and $\max\psi_{rel}$ is
attained at $\xi=(N-1)\pi$ (order $k$) and is of order $h^2$. So the relative error
$|u-u^h|_1/|u|_1$ of the scheme \eqref{Lmod} is of order $k^2h^2$.

\begin{figure}
  \centering
  \includegraphics[scale=.3,trim=5 0 0 0,clip]{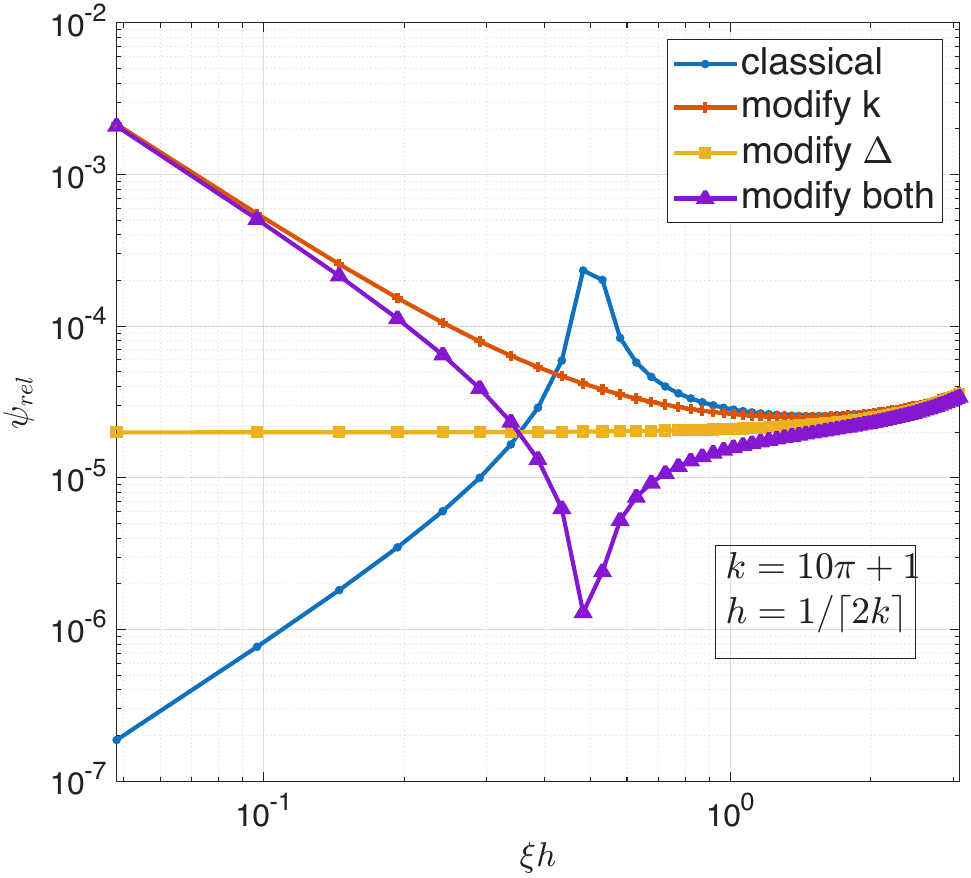}
  \includegraphics[scale=.3,trim=5 0 0 0,clip]{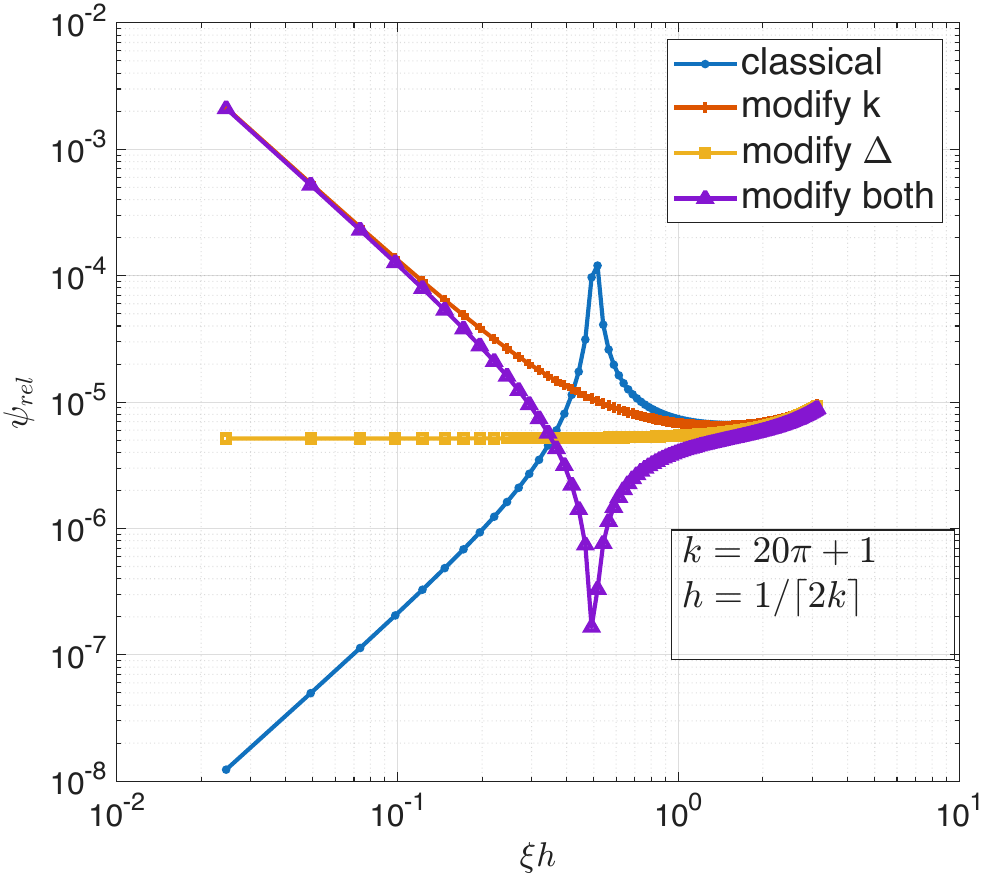}
  \includegraphics[scale=.3,trim=5 0 0 0,clip]{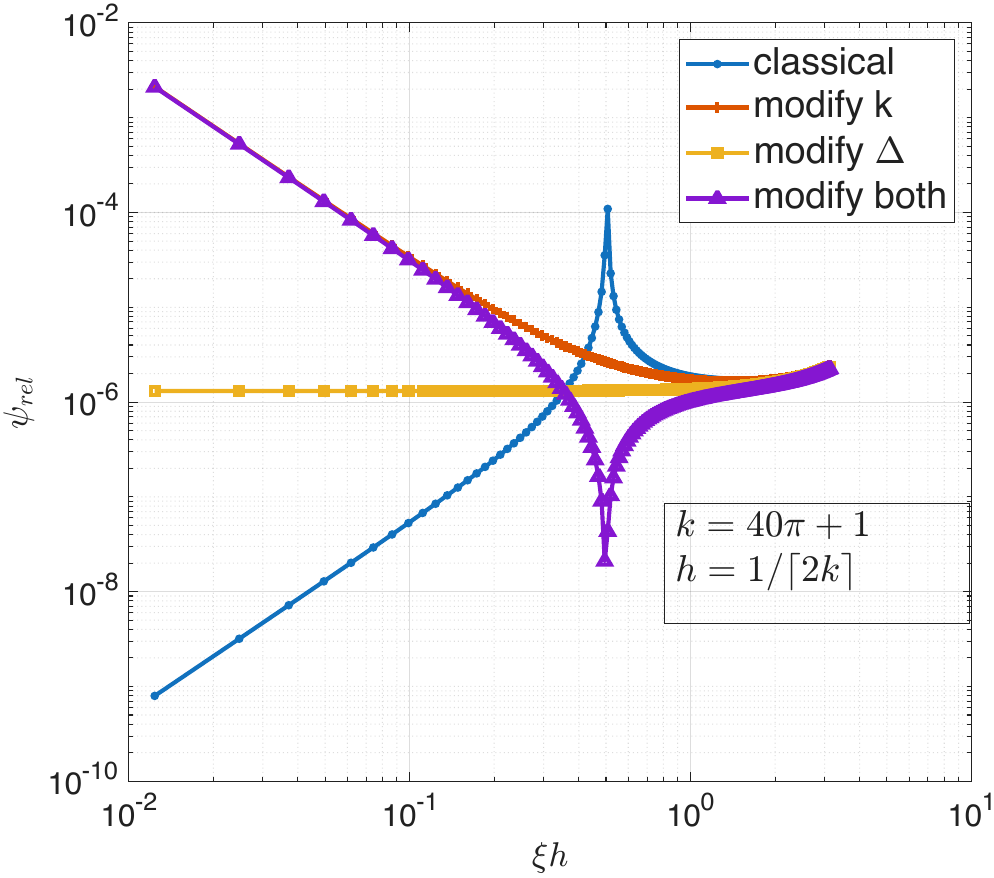}
  \includegraphics[scale=.3,trim=5 0 0 0,clip]{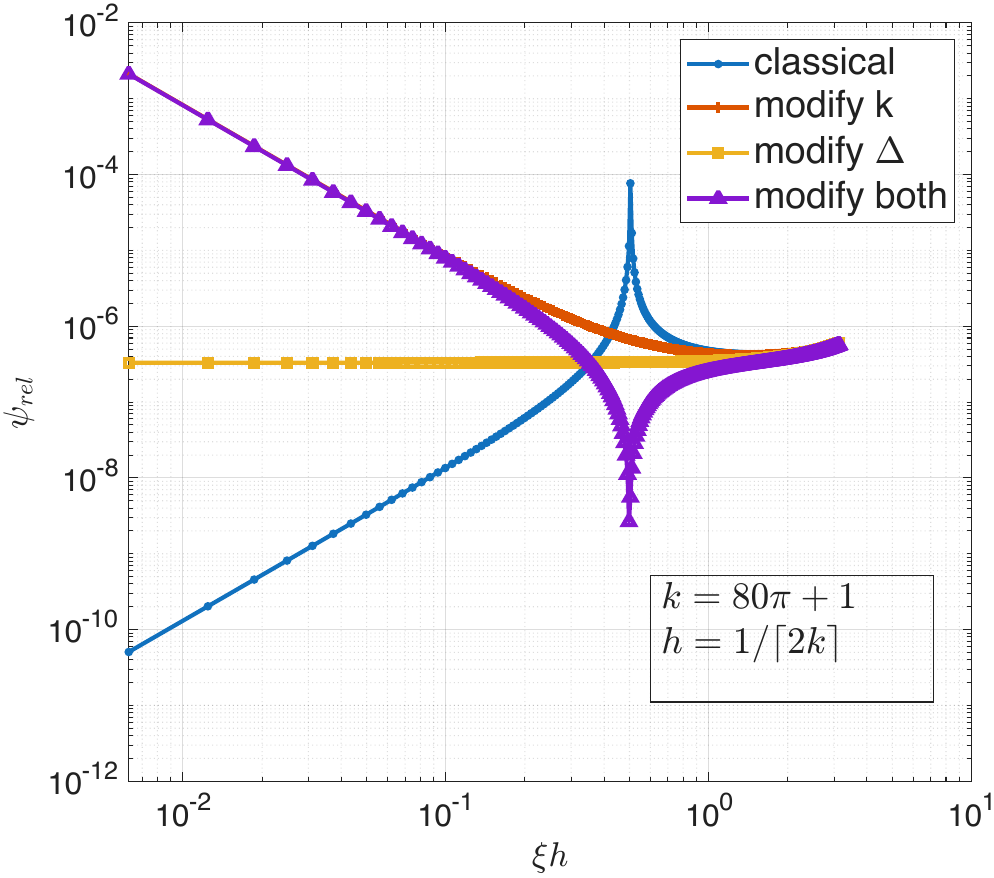}
  \includegraphics[scale=.3,trim=5 0 0 0,clip]{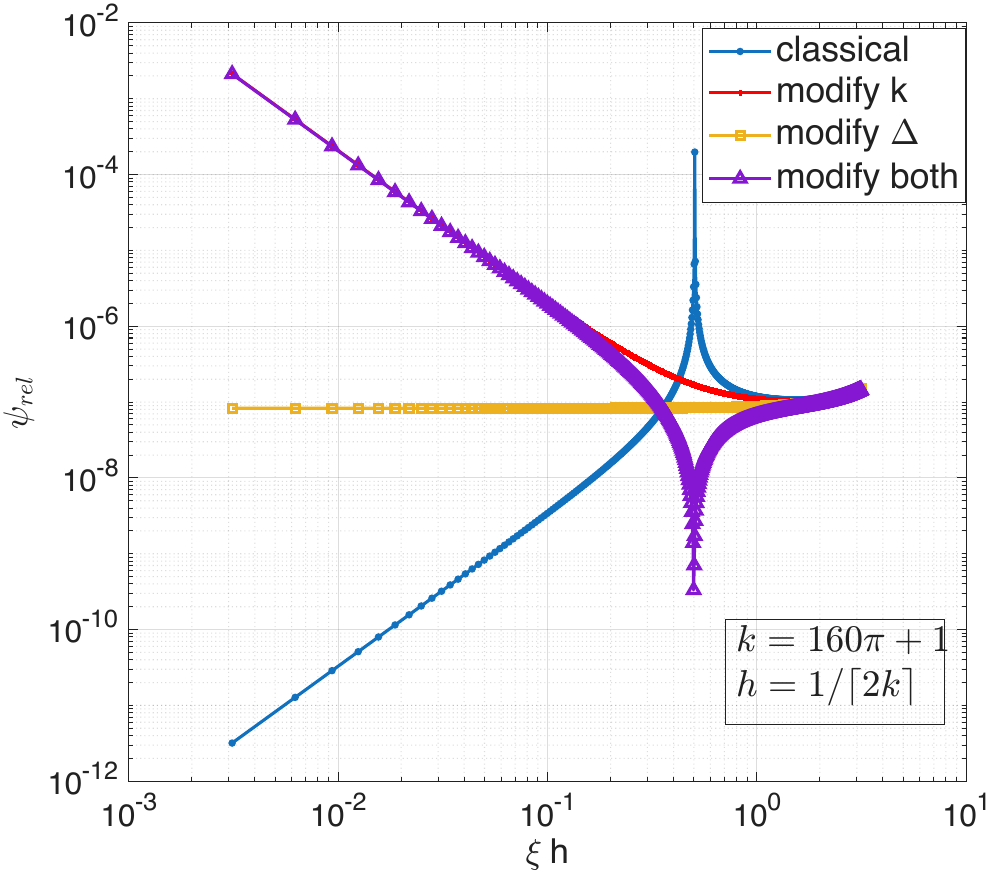}
  \includegraphics[scale=.3,trim=45 180 70 180,clip]{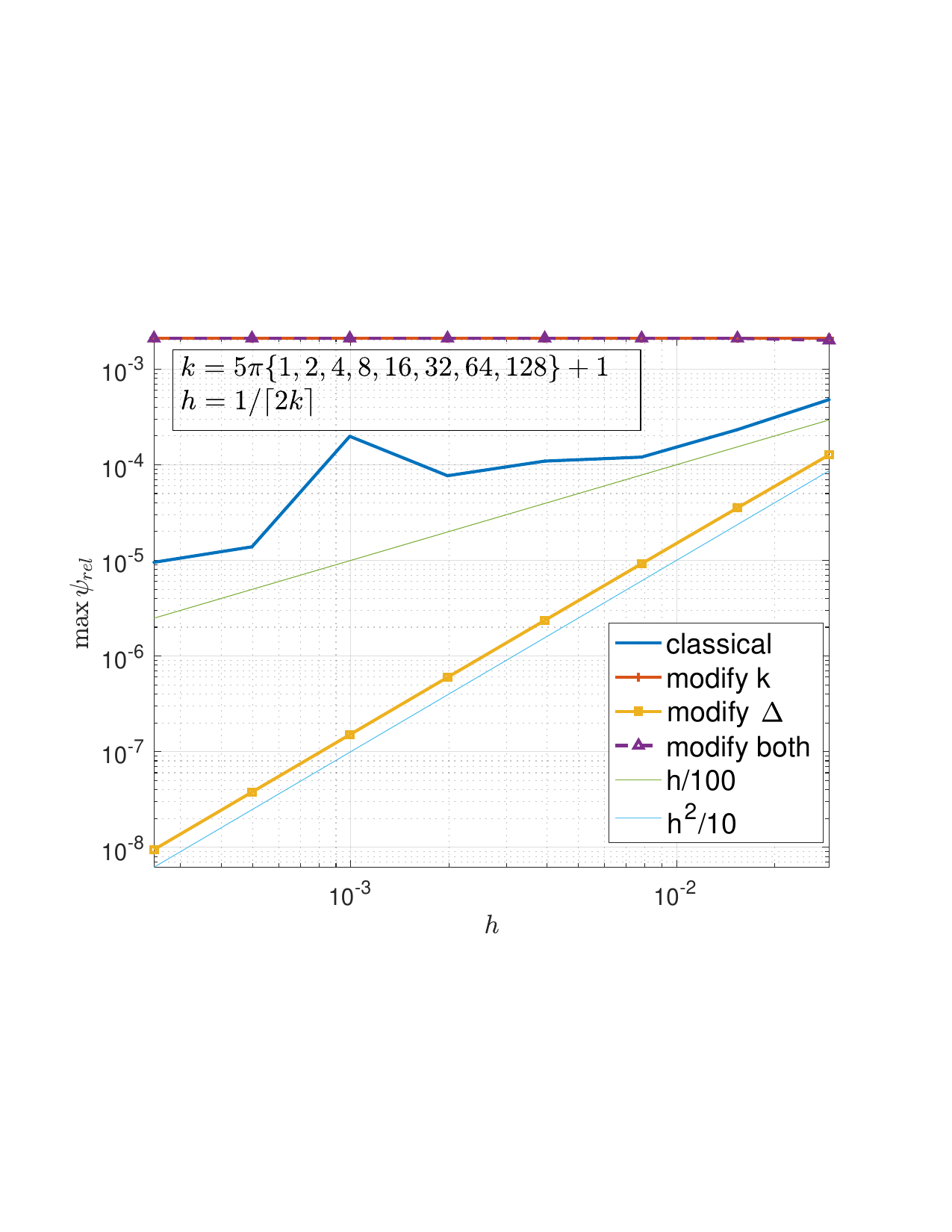}
  \caption{Symbol errors $\psi_{rel}$ \eqref{symerr} for the classical \eqref{1d3pt} \& dispersion
    free \eqref{kmod}, \eqref{Lmod}, {\eqref{Lfmod}}: $kh$ fixed}\label{khfixed}
\end{figure}

\HZ{Finally, the scaling with $\sigma_k$ for the relative symbol error $\psi_{rel}$ divided by
  $k^3h^2$ is demonstrated in Figure~\ref{relsigmak} where $k+\sigma_k=21\pi$, $\sigma_k$ is halved,
  and $h=1/\lceil 1/\sqrt{\sigma_k/k^3}\rceil$. \MG{We see} that the maximum relative symbol error of
  the classical scheme is inversely proportional to $\sigma_k$, while all the dispersion free
  schemes are independent of $\sigma_k$. In particular, for the third dispersion free scheme from
  \cite{stolk2016}, the symbol value near $k$ even decreases linearly with $\sigma_k$.}

\begin{figure}
  \centering
  \includegraphics[scale=.3,trim=0 0 0 0,clip]{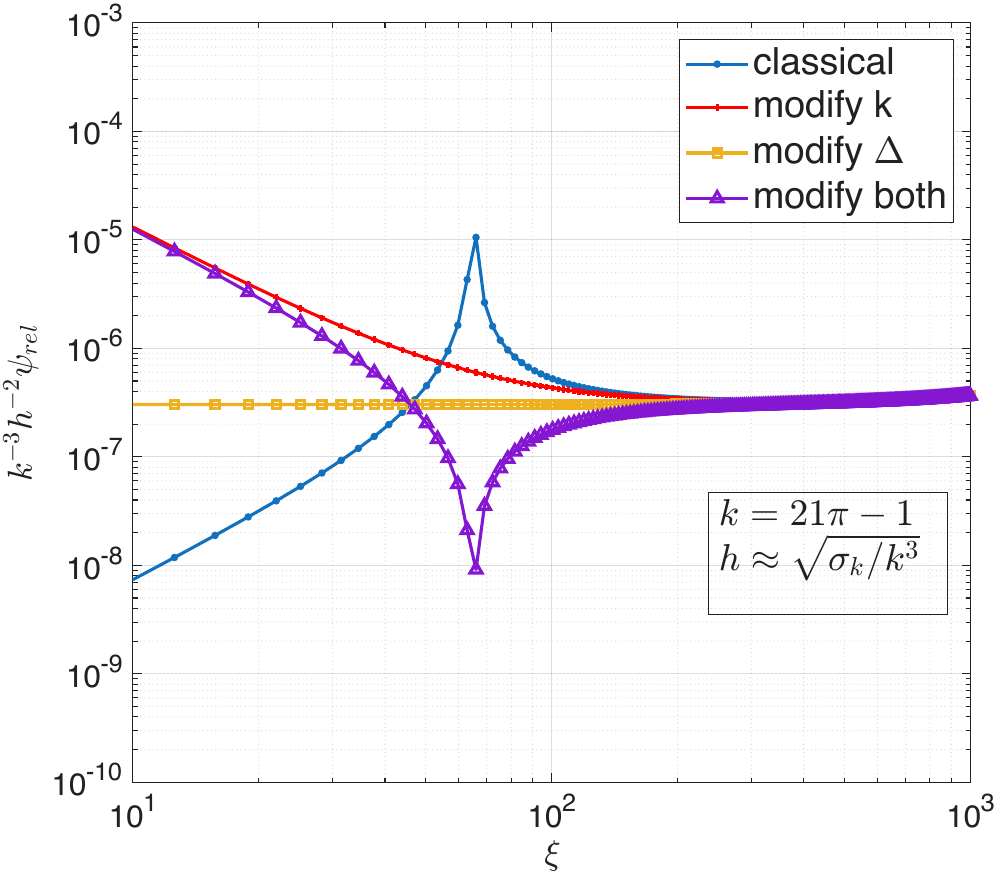}
  \includegraphics[scale=.3,trim=0 0 0 0,clip]{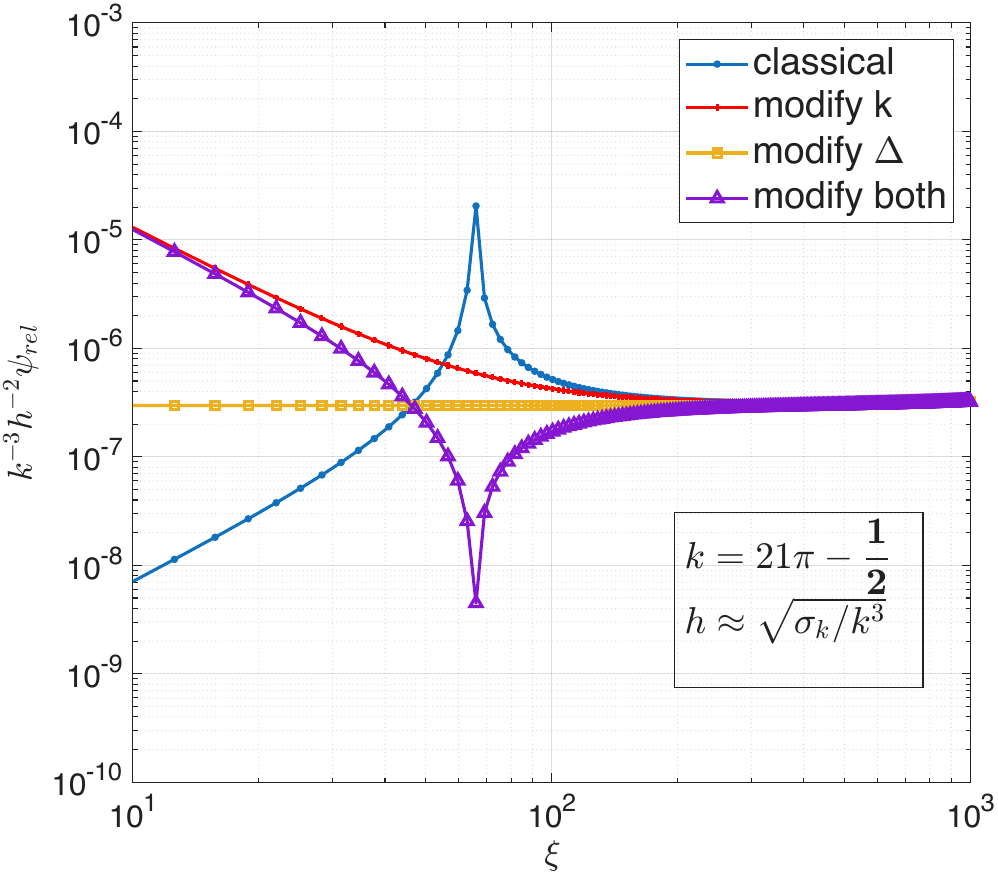}
  \includegraphics[scale=.3,trim=0 0 0 0,clip]{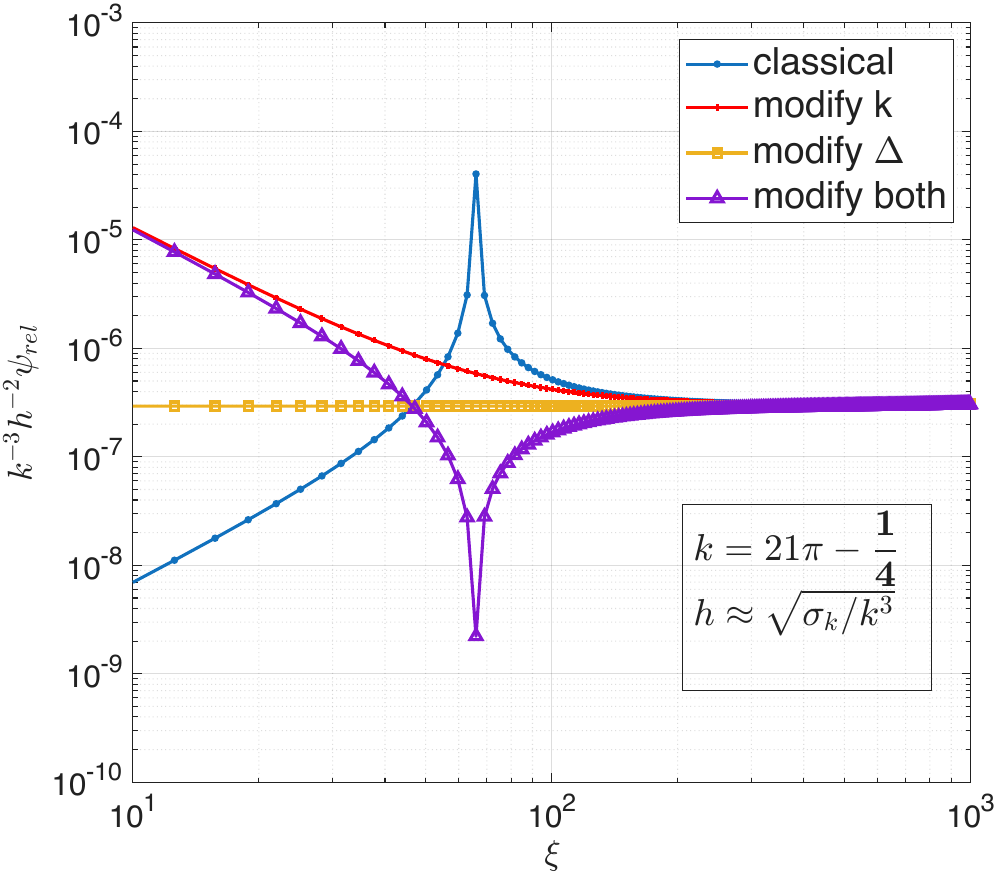}
  \includegraphics[scale=.3,trim=0 0 0 0,clip]{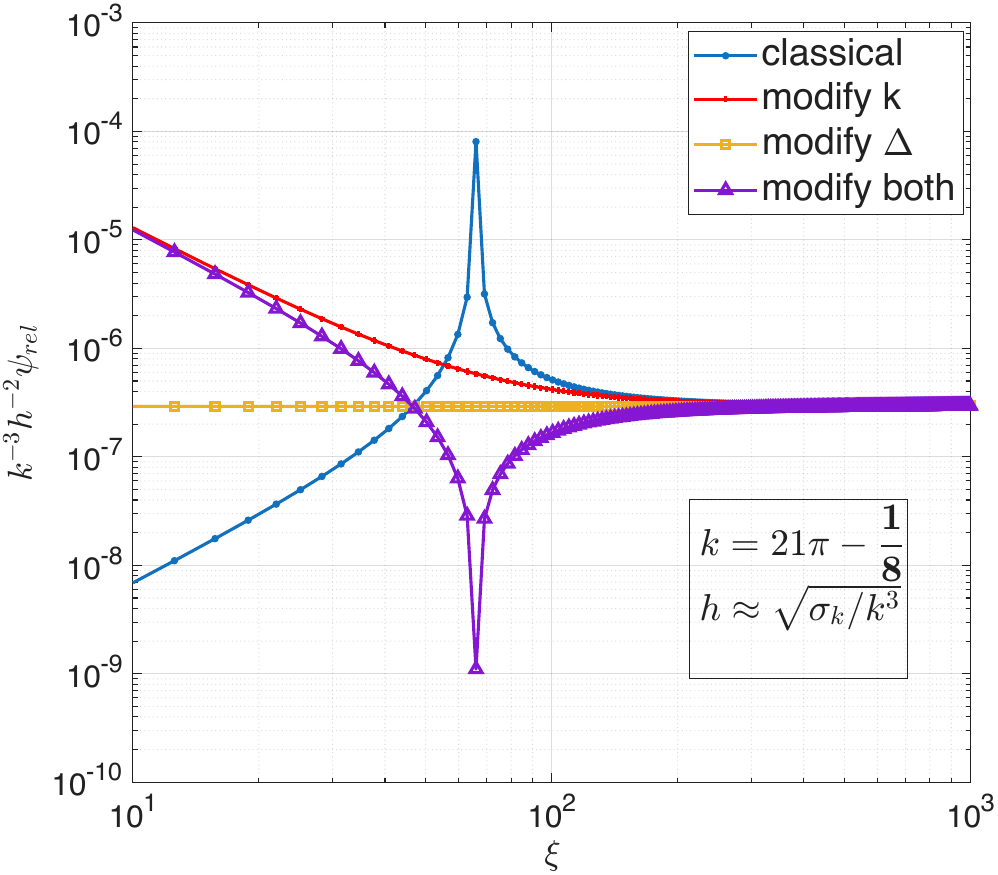}
  \includegraphics[scale=.3,trim=0 0 0 0,clip]{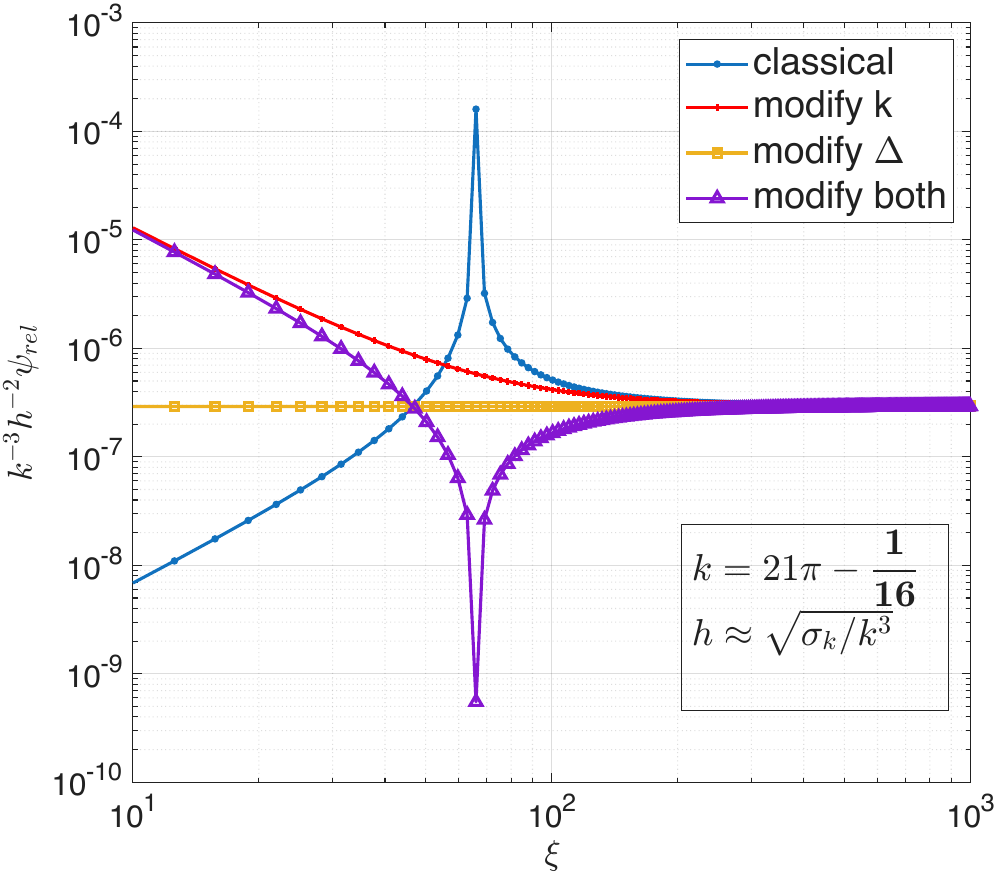}
  \includegraphics[scale=.3,trim=0 0 0 0,clip]{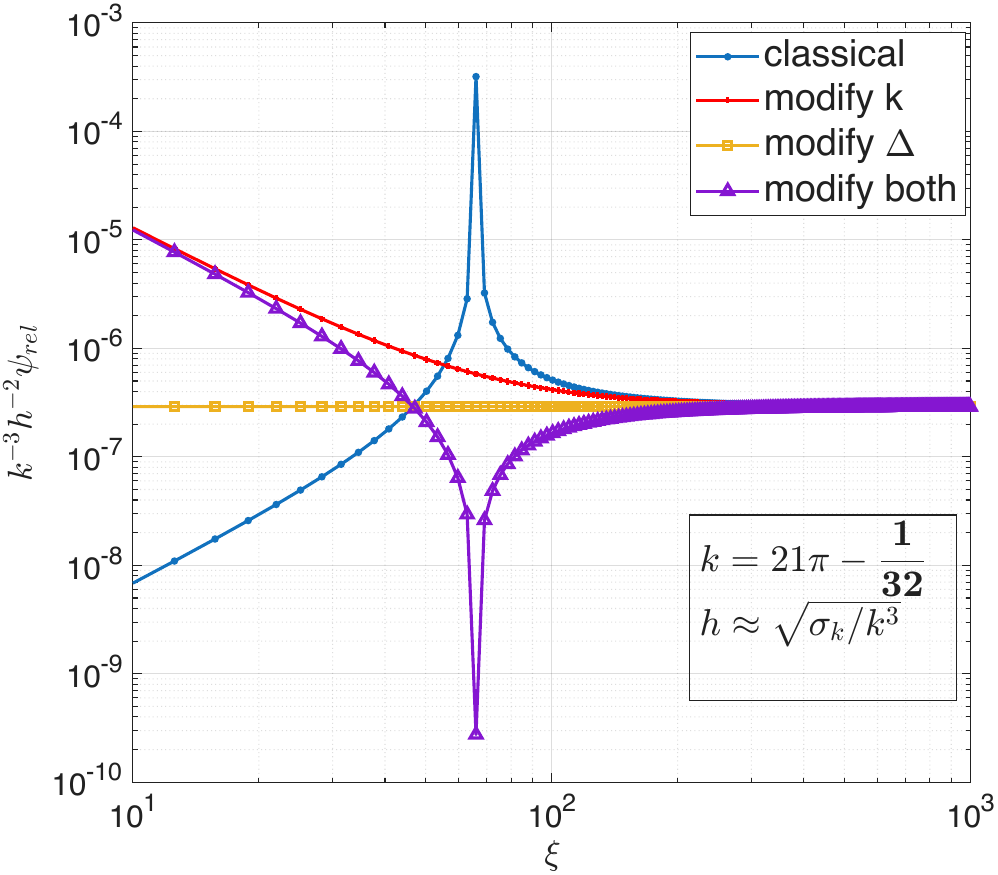}
  \caption{Divided symbol errors $\frac{\psi_{rel}}{k^3h^2}$ \eqref{symerr} for the classical
    \eqref{1d3pt} \& dispersion free \eqref{kmod}, \eqref{Lmod}, {\eqref{Lfmod}}}\label{relsigmak}
\end{figure}

\section{Numerical Experiments}\label{numer}

\HZ{We study numerically the scaling with $k$, $h$ and $\sigma_k$. We take $g_0=g_1=0$,
  $f(x)=\sin(\xi x)$ for some $\xi\in\pi\{1,\ldots,\frac{1}{h}-1\}$. So $\xi$ depends on the finite
  difference scheme and the error being considered.}

\HZ{First, we fix $\sigma_k=1$, double $k-\sigma_k$, choose $\xi$ to maximize the absolute error,
  and let $h=1/\lceil k^{3/2}\rceil$. The absolute errors are shown in Figure~\ref{numabs}, which
  confirms the results from the visual analysis in section~\ref{visual}. One needs to be careful to
  read the scaling shown in the figure. For example, the $L^2$ error of \eqref{Lfmod} in
  Figure~\ref{numabs} appears to scale as $(kh)^6$ when $k$ doubles and $h\asymp k^{-3/2}$, but
  since the actual order should be in the form $k^{\alpha}h^2$, we find
  $(2\cdot2^{-3/2})^6=2^{\alpha}\cdot (2^{-3/2})^2$ gives $\alpha=0$. Then in
  Figure~\ref{helm1dsigmak}, we study the dependence on $\sigma_k$, which corroborates
  Theorem~\ref{abserr} and the results from the visual analysis in section~\ref{visual}.}

  \begin{figure}
  \centering
  \includegraphics[scale=.5,trim=0 0 0 0,clip]{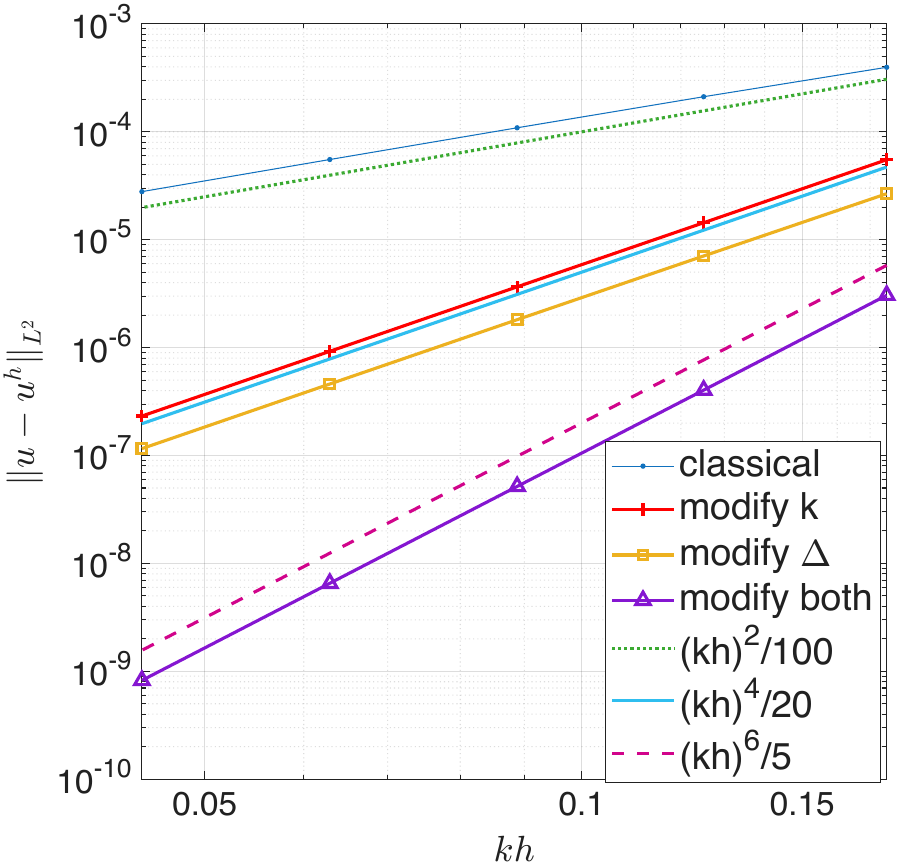}\qquad
  \includegraphics[scale=.5,trim=0 0 0 0,clip]{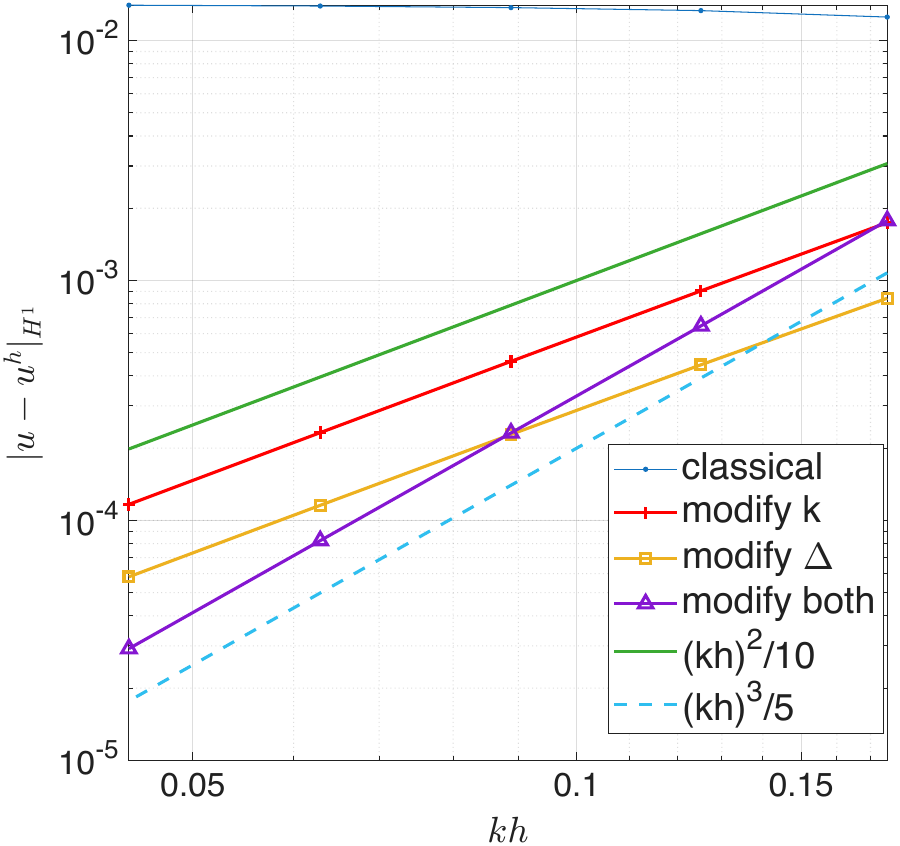}
  \caption{Numerical absolute errors with $g_0=g_1=0$, $f=\sin(\xi x)$, $k=10\pi\cdot 2^{0:4}+1$,
    $h=1/\lceil k^{3/2}\rceil$, and $\xi=k-1$ for the classical \eqref{1d3pt} \& dispersion free
    \eqref{kmod}, \eqref{Lmod}, but $\xi=\pi(\frac{1}{h}-1)$ for \eqref{Lfmod}. In this setting, the
    scalings shown are only apparent and must be reinterpreted in the form $k^{\alpha}h^2$ because
    all the schemes are second order in $h$ for fixed $k$. For example, comparing
    $(2k\cdot 2^{-3/2}h)^3=(2\cdot2^{-3/2})^3(kh)^3$ and
    $(2k)^{\alpha}(2^{-3/2}h)^2=2^{\alpha-3}k^{\alpha}h^2$, we obtain $-3/2=\alpha-3$ or
    $\alpha=3/2$. See also the footnote~\ref{ft2} for Figure~\ref{khref}.}\label{numabs}
\end{figure}

\begin{figure}
  \centering
  \includegraphics[scale=.55,trim=0 0 0 0,clip]{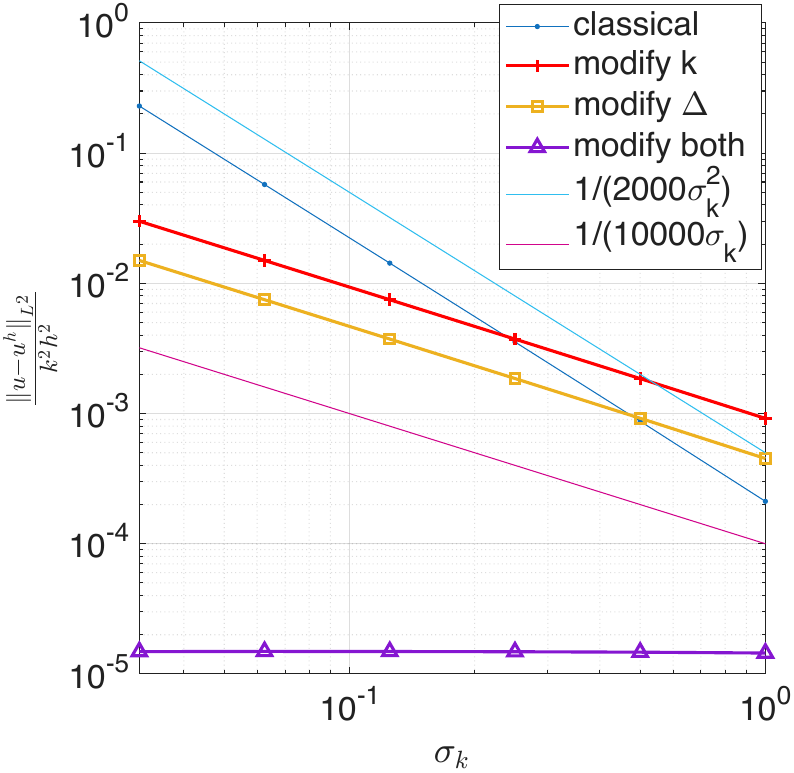}\quad
  \includegraphics[scale=.55,trim=0 0 0 0,clip]{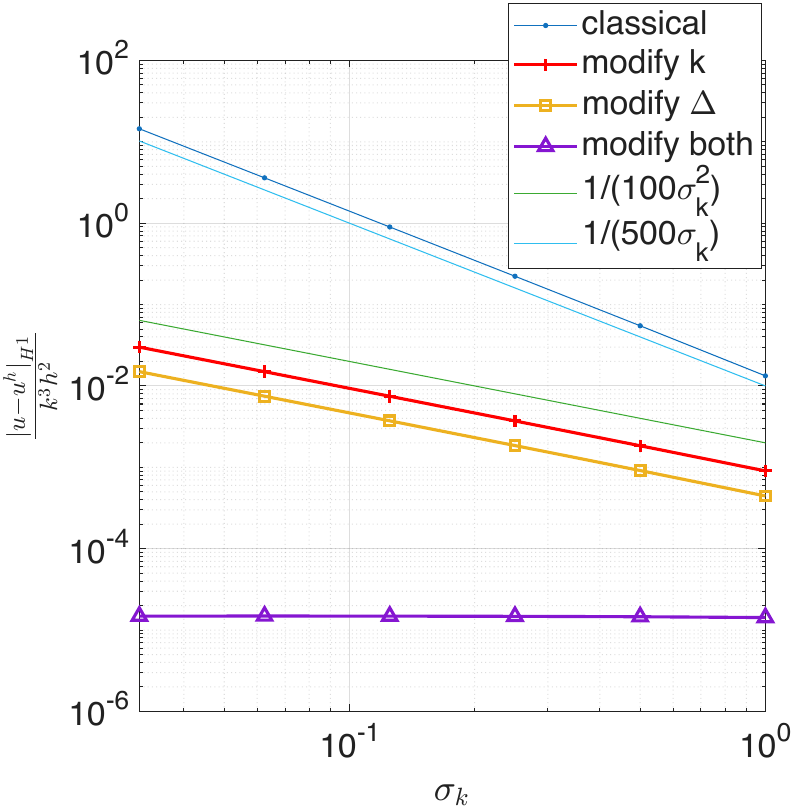}
  \caption{Numerical absolute errors (divided by $k^2h^2$ on the left and $k^3h^2$ on the right)
    against $\sigma_k=\operatorname{dist}(k,\pi\mathbb{N})$ with $k^3h^2\approx\sigma_k$,
    $k=20\pi+\sigma_k$, $f=\sin(20\pi x)$ and $g_0=g_1=0$ for the classical \eqref{1d3pt} \&
    dispersion free \eqref{kmod}, \eqref{Lmod}, \eqref{Lfmod}}\label{helm1dsigmak}
\end{figure}

\HZ{In the same settings as for the absolute errors, we numerically compute the relative errors. But
  note that the estimates \eqref{E1hrelL2max} and \eqref{E1hrelH1max} depend on $|u|_{p+2}/|u|_p$
  for $p=0,1$. In particular, when $\sigma_k=1$, $h\asymp k^{-3/2}$ and
  $f(x)=\sin(\pi(1-\frac{1}{h})x)$, we have $|u|_{p+2}/|u|_p\asymp k^3$, and we observed the
  numerical relative errors of order $k^3h^2=O(1)$ for all the finite difference schemes. Also, for
  the scheme \eqref{kmod} and \eqref{Lfmod}, taking $f(x)=\sin(\pi x)$ results $O(1)$ relative
  errors. Therefore, we shall assume here $f(x)=\sin(\xi x)$ with $\xi\asymp k$.  First, the scaling
  with $k$ and $h$ is shown in Figure~\ref{numrel}. Then we study the depdence on $\sigma_k$ in
  Figure~\ref{numrelsigmak}, which corroborates Theorem~\ref{relerr} as well as the results from the
  visual analysis in section~\ref{visual}.}

  \begin{figure}
  \centering
  \includegraphics[scale=.5,trim=0 0 0 0,clip]{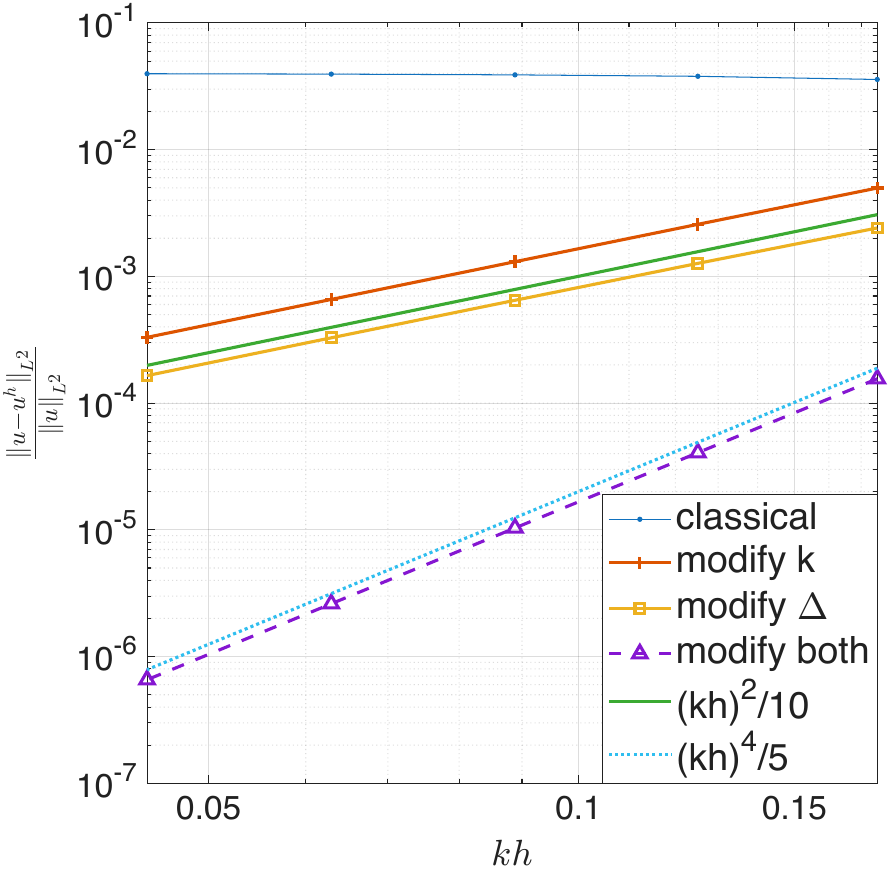}\qquad
  \includegraphics[scale=.5,trim=0 0 0 0,clip]{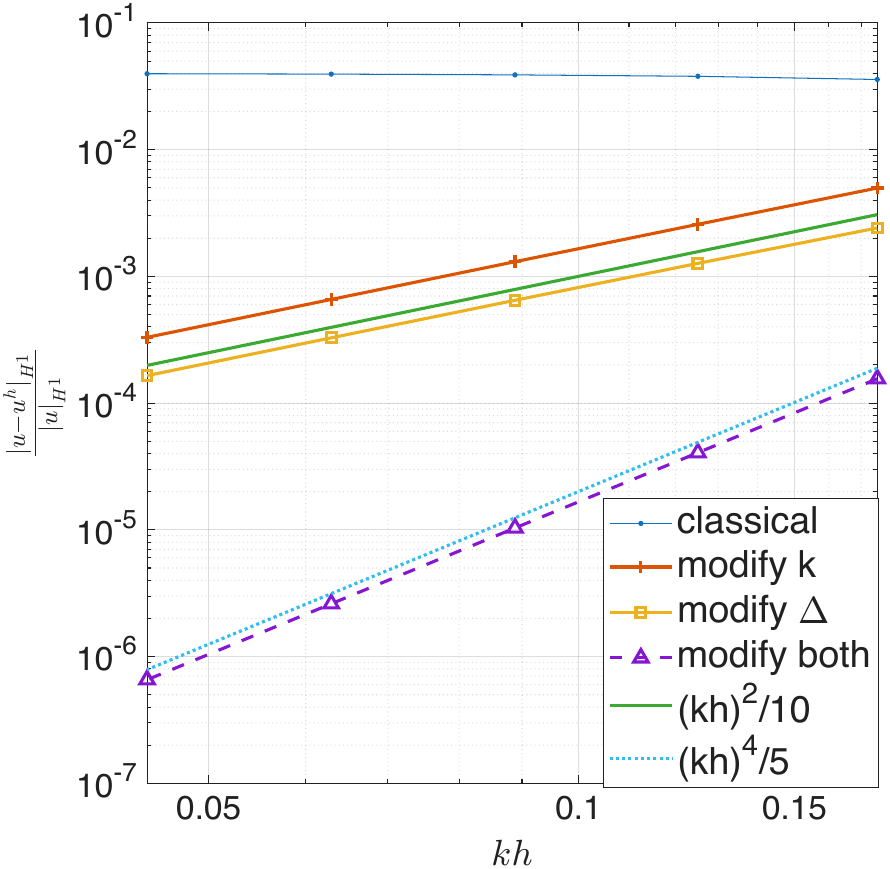}
  \caption{Numerical relative errors with $g_0=g_1=0$, $f=\sin(\xi x)$, $k=10\pi\cdot 2^{0:4}+1$,
    $h=1/\lceil k^{3/2}\rceil$, and $\xi=k-1$ for the classical \eqref{1d3pt} \& dispersion free
    \eqref{kmod}, \eqref{Lmod}, \eqref{Lfmod}. The $L^2$- and $H^1$-semi-norm relative errors are
    equal in this case. In this setting, the scalings shown are only apparent and must be
    reinterpreted in the form $k^{\alpha}h^2$ because all the schemes are second order in $h$ for
    fixed $k$. For example, comparing $(2k\cdot 2^{-3/2}h)^4=(2\cdot2^{-3/2})^4(kh)^4$ and
    $(2k)^{\alpha}(2^{-3/2}h)^2=2^{\alpha-3}k^{\alpha}h^2$, we obtain $-2=\alpha-3$ or
    $\alpha=1$.}\label{numrel}
\end{figure}

\begin{figure}
  \centering
  \includegraphics[scale=.5,trim=0 0 0 0,clip]{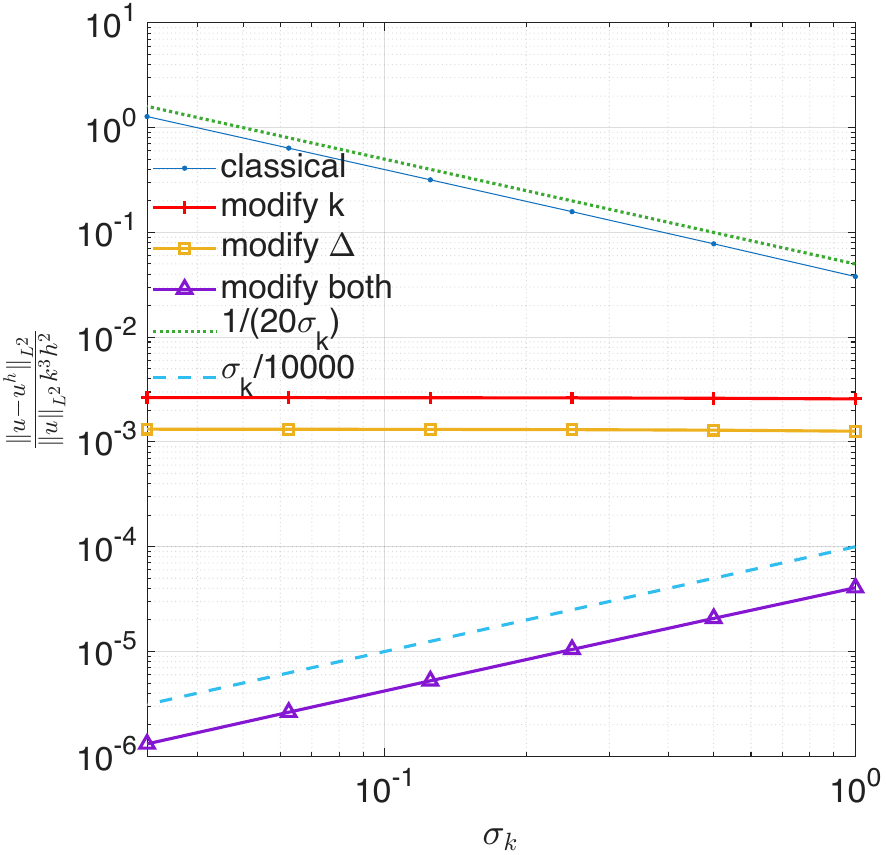}\quad
  \includegraphics[scale=.5,trim=0 0 0 0,clip]{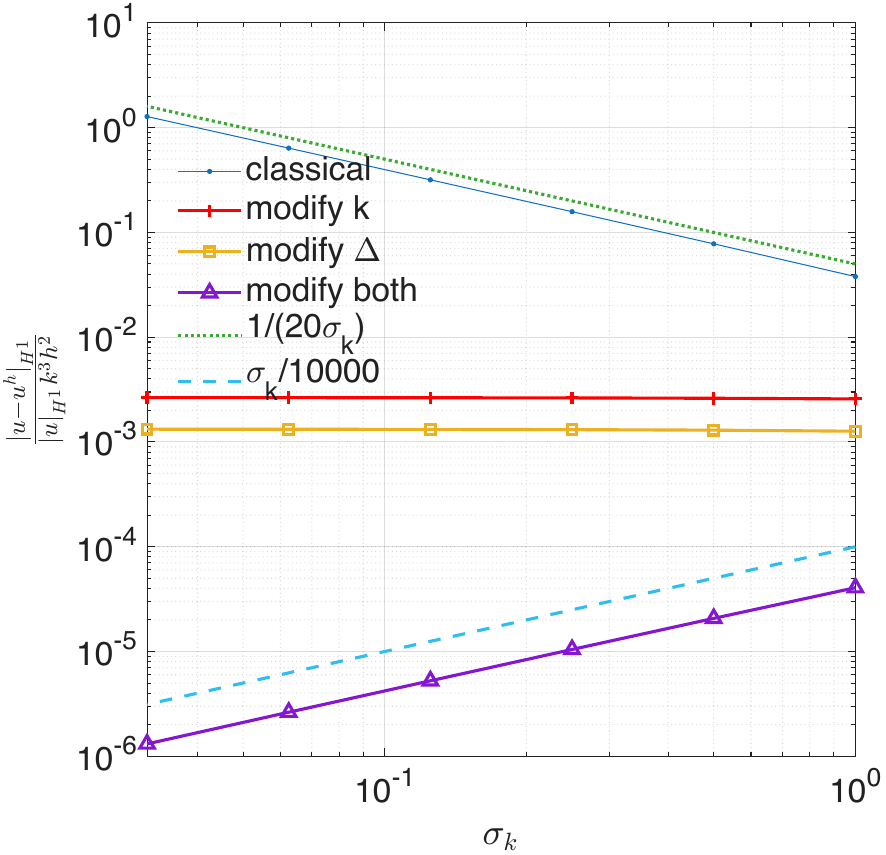}
  \caption{Numerical relative errors (divided by $k^3h^2$) against
    $\sigma_k=\operatorname{dist}(k,\pi\mathbb{N})$ with $k^3h^2\approx\sigma_k$,
    $k=20\pi+\sigma_k$, $f=\sin(20\pi x)$ and $g_0=g_1=0$ for the classical \eqref{1d3pt} \&
    dispersion free \eqref{kmod}, \eqref{Lmod}. The $L^2$- and $H^1$-semi-norm relative errors are
    equal in this case.}\label{numrelsigmak}
\end{figure}

\section{Conclusions and Discussions}

Based on Fourier decomposition, we analyzed the classical centered finite difference method for the
Helmholtz equation in 1D with Dirichlet conditions. \HZ{Such results have been considered standard
  for the closely related finite element method.} Our estimates are guaranteed by two-sided bounds
on the attainable worst case\HZ{, and the lower bounds are the main novelty}. That is, for each
wavenumber $k$, we can choose a source term to maximize the error order in $k$, and \HZ{the maximal
  error is bounded from below and above}. \HZ{The distance $\sigma_k$ of $k$ to the square-root of
  the spectrum of the continuous negative Laplacian is revealed in the error estimates.} We also
used the Fourier decomposition for evaluating three dispersion free finite difference methods, which
was impossible by the dispersion analysis (because all have zero dispersion error).

There are some limitations of the proposed approach in this work. First, it only works on uniform
grids of simple geometry. Second, with radiation boundary conditions the generalized Fourier
decomposition based on the Sturm-Liouville problem will lead to a non-orthogonal basis, and the
current approach does not work directly. {It might be interesting to try the singular value
  decomposition i.e. consider the least-squared ($\mathcal{L}^*\mathcal{L}$) Sturm-Liouville
  problem, which diagonalises the Helmholtz operator with two orthogonal bases.} Third, it works
only for constant coefficient problems.

However, the proposed approach provides a new tool for quantitatively evaluating linear schemes, in
complement to dispersion analysis. The latter considers only the zero source case and does not lead
to error estimates. In other words, the approach can be thought of as a quantitative tool for finite
difference methods in analogy to the local (or rigorous) Fourier analysis for multigrid methods. It
will be helpful for evaluating and designing finite difference methods for the Helmholtz
equation. Some ideas have been presented in \cite{dwarka2021} which inspired this work.

For future work, it is interesting to consider models in 2D/3D with simple geometries and analyze
some compact high-order schemes{, which is more challenging e.g. in finding the maximum of a
  multivariate symbol}. Moreover, it is worthwhile to try optimizing discretization parameters based
on the Fourier decomposition.

\HZ{We note that wavenumber explicit error estimates have been well developed for low and high order
  finite element methods. A parallel development for finite difference methods would be good to
  have, given that they are} actually used in applications (e.g. \cite{tournier2022}) for
\emph{variable coefficient} Helmholtz problems. The variational theory of \HZ{finite difference
  methods} developed in \cite{jova2014} may be useful.

\section*{Acknowledgement}

We thank Dr Haoran Chen from XJTLU and River Li from \url{math.stackexchange.com} for helpful
discussions on some analysis. \HZ{We thank the anonymous reviewer for many expert suggestions which
  substantially helped us to improve the manuscript.}

\section*{Funding}

MG acknowledges support from Swiss National Science Foundation under Grant 200021-236691.

\section*{Data Availability}

Data publicly available at https://bitbucket.org/mathbuddy/fourierfdm1d

\section*{Conflict of interest}
The authors declare that they have no conflict of interest.

\bibliographystyle{plain}
\bibliography{dispersion.bib}

\appendix

\section{{Proof of the claim \eqref{eqphi}}}
\label{phiproof}

The claim \eqref{eqphi} is
\[
  \phi(\theta)=\frac{\theta}{\sin^2\theta-\mu^2} - \frac{\theta}{\theta^2-\mu^2}
  \text{ has no local maximum for }\theta_k<\theta<\frac{\pi}{2}\text{ and }0<\mu<1,
\]
where $\theta_k:=\arcsin\mu$. The following is a proof.

\begin{proof}
  We calculate
  \[
    \phi'(\theta)=\frac{2 \theta^2}{\left(\theta^2-\mu^2\right)^2}-\frac{1}{\theta^2-\mu
     ^2}+\frac{1}{\sin^2\theta-\mu^2}-\frac{2 \theta \sin \theta \cos \theta}{\left(\sin
       ^2\theta-\mu^2\right)^2}=:\frac{\mathcal{P}(\theta)}{\left(\sin^2\theta-\mu
       ^2\right)^2\left(\theta^2-\mu^2\right)^2}.
  \]
  Let $\nu=\mu^2$. Then a critical point $\theta_*$ of $\phi(\theta)$ is the root of
  \[
    \begin{aligned}
      \mathcal{P}(\theta)=&-2 \theta^5 \sin \theta \cos \theta+\theta^4 \sin^2\theta+\theta ^2
      \sin^4\theta+\left(-\theta^4+4 \theta^3 \sin \theta \cos \theta-4 \theta^2 \sin
        ^2\theta+\sin^4\theta\right)\nu+\\
      &\left(3 \theta^2-\sin^2\theta-2 \theta \sin \theta \cos \theta\right)\nu^2=0.
    \end{aligned}
  \]
  The above equation is quadratic in $\nu$. By calculation, we find
  \[
    \nu=\frac{\theta^4-\sin^4\theta-4 \theta^3 \sin \theta \cos \theta + 4\theta^2
      \sin^2\theta\pm\frac{1}{4 \sqrt{2}}{(2 \theta^2+\cos (2 \theta )-1)}\sqrt{t}}{2 \left(3 \theta
        ^2-\sin \theta (\sin \theta+2 \theta \cos \theta)\right)}=:w_{\pm}(\theta),
  \]
  where
  \[
    t=128 \theta ^3 \sin\theta\cos\theta+ 8(\theta^2-\sin^2\theta)^2.
  \]
  We calculate
  \[
    w_{+}(\theta)-\sin^2\theta=\frac{1}{8}\frac{(\theta^2-\sin^2\theta) \left(4 \theta ^2-8 \theta
        \sin (2 \theta )+2 \cos (2 \theta )-2+\sqrt{2t}\right)}{3 \theta ^2-\sin \theta (\sin
      \theta+2 \theta \cos \theta)}.
  \]
  The second factor in the numerator is actually positive, which makes
  $w_{+}(\theta)>\sin^2\theta$. Indeed, we have
  \[
    2t-\left(4 \theta ^2-8 \theta \sin (2 \theta )+2 \cos (2 \theta )-2\right)^2
    =64 \theta  \sin \theta \cos \theta \left(6 \theta ^2-2 \theta  \sin (2 \theta )+\cos (2 \theta )-1\right)
  \]
  and
  \[
    \left(6 \theta ^2-2 \theta \sin (2 \theta )+\cos (2 \theta )-1\right)'=12 \theta -4 \sin (2
    \theta )-4 \theta \cos (2 \theta )>0\text{ for }0<\theta<\frac{\pi}{2}.
  \]
  So a critical point of $\phi$ must satisfy $\nu=w_{-}(\theta)$. We are going to show that
  $w_{-}'(\theta)>0$ for $0<\theta<\frac{\pi}{2}$. By calculation, we get
  \[
    w_{-}'(\theta)=\frac{16\theta v(\theta)}{8 \left(3 \theta ^2-\sin ^2\theta-2 \theta  \sin \theta \cos \theta\right)^2\sqrt{t}}\quad\text{ with }
  \]
  \[
    \begin{aligned}
      v(\theta)=&-16 \sqrt{2} \theta ^6+26 \sqrt{2} \theta ^6 \sin ^2\theta+16 \sqrt{2} \theta ^5 \sin ^3\theta \cos \theta-36 \sqrt{2} \theta ^5 \sin \theta \cos \theta-106 \sqrt{2} \theta ^4 \sin ^4\theta+\\
      &92 \sqrt{2} \theta ^4 \sin ^2\theta+16 \sqrt{2} \theta ^3 \sin ^5\theta \cos \theta-24
      \sqrt{2} \theta ^3 \sin ^3\theta \cos \theta+14 \sqrt{2} \theta ^2 \sin ^6\theta-8 \sqrt{2}
      \theta ^2 \sin ^4\theta+\\
      &2 \sqrt{2} \sin ^8\theta-4 \sqrt{2} \sin ^6\theta-4 \sqrt{2} \theta \sin ^5\theta \cos
      \theta-\theta ^4\sqrt{t}+5 \theta ^4 \sqrt{t} \sin
      ^2\theta+2 \theta ^3 \sqrt{t} \sin \theta \cos \theta-\\
      &4 \theta ^2 \sqrt{t} \sin ^4\theta-\sqrt{t} \sin ^6\theta+\sqrt{t} \sin ^4\theta-2 \theta
      \sqrt{t} \sin ^3\theta \cos \theta.
    \end{aligned}
  \]
  For $0<\theta\le1$, we claim
  \begin{equation}\label{sqrtt}
    a(\theta):=8 \sqrt{2} \theta ^2-\frac{8 \sqrt{2} \theta ^4}{3}<\sqrt{t}<\frac{7 \theta ^6}{30 \sqrt{2}}-\frac{8 \sqrt{2} \theta ^4}{3}+8 \sqrt{2} \theta ^2=:b(\theta).
  \end{equation}
  Indeed, $t=64 \theta ^3 \sin (2 \theta )+8 (\theta -\sin (\theta ))^2 (\theta +\sin (\theta ))^2$
  can be enlarged using
  \[
    \begin{aligned}
      \theta -\sin (\theta )&< \frac{\theta^3}{6}-\frac{\theta^5}{120}+\frac{\theta^7}{5040}\\
      \theta +\sin (\theta )&< 2 \theta-\frac{\theta^3}{6}+\frac{\theta^5}{120}\\
      \sin (2 \theta )&< 2 \theta-\frac{4 \theta^3}{3}+\frac{4 \theta^5}{15}-\frac{8 \theta
       ^7}{315}+\frac{4 \theta^9}{2835}
    \end{aligned}
  \]
  for $0<\theta<\frac{\pi}{2}$. Denote the enlarged $t$ as $\tilde{t}$. Then we have
  \[
    \begin{aligned}
      b(\theta)^2-\tilde{t}=&\left(\frac{584 \theta^{10}}{945}-\frac{2173 \theta
         ^{12}}{22680}\right) +\left(\frac{53 \theta^{14}}{18900}-\frac{1933 \theta
         ^{16}}{11907000}\right)+ \left(\frac{19 \theta^{18}}{2976750}-\frac{1921 \theta
         ^{20}}{11430720000}\right)+\\
      &\left(\frac{31 \theta^{22}}{11430720000}-\frac{\theta
         ^{24}}{45722880000}\right)>0\text{ for }0<\theta<1.
    \end{aligned}
  \]
  Hence, $\sqrt{t}<b(\theta)$. We can shrink $t$ using
  \[
    \begin{aligned}
      \theta -\sin (\theta )&> \frac{\theta^3}{6}-\frac{\theta^5}{120}\\
      \theta +\sin (\theta )&> 2 \theta-\frac{\theta^3}{6}\\
      \sin (2 \theta )&> 2 \theta-\frac{4 \theta^3}{3}+\frac{4 \theta^5}{15} -\frac{8 \theta^7}{315}
    \end{aligned}
  \]
  for $0<\theta<\frac{\pi}{2}$. Denote the shrunk $t$ as $\underset{\sim}t$. Then we have
  \[
    \underset{\sim}{t}-a(\theta)^2=\left(\frac{56 \theta ^8}{15}-\frac{352 \theta
        ^{10}}{189}\right)+ \left(\frac{47 \theta ^{12}}{2025}-\frac{2 \theta ^{14}}{2025}\right) +
    \frac{\theta ^{16}}{64800}>0 \text{ for }0<\theta<1.
  \]
  Substituting \eqref{sqrtt} and
  \[
    \theta-\frac{\theta^3}{6}<\sin\theta<\theta-\frac{\theta^3}{6}+\frac{\theta^5}{120}
  \]
  \[
    1-\frac{\theta^2}{2}+\frac{\theta^4}{24}-\frac{\theta^6}{720}<\cos\theta<1-\frac{\theta
    ^2}{2}+\frac{\theta^4}{24}
  \]
  into $v(\theta)$ with upper bounds into negative terms and lower bounds into positive terms,
  yields
  \[
    \begin{aligned}
      v(\theta)>& \left(\frac{389 \theta ^{10}}{90 \sqrt{2}}-\frac{1871 \theta ^{12}}{270
          \sqrt{2}}\right)+ \left(\frac{17071 \theta ^{14}}{5400 \sqrt{2}}-\frac{914 \sqrt{2} \theta
          ^{16}}{2025}\right)+
      \left(\frac{575611 \theta ^{18}}{2916000 \sqrt{2}}-\frac{4573 \theta ^{20}}{144000 \sqrt{2}}\right)+\\
      &\left(\frac{140789 \theta ^{22}}{38880000 \sqrt{2}}-\frac{1263749 \theta ^{24}}{4199040000
          \sqrt{2}}\right)+
      \left(\frac{58373 \theta ^{26}}{3110400000 \sqrt{2}}-\frac{49411 \theta ^{28}}{55987200000 \sqrt{2}}\right)+\\
      &\left(\frac{17221 \theta ^{30}}{559872000000 \sqrt{2}}-\frac{139 \theta ^{32}}{186624000000
          \sqrt{2}}\right)+ \left(\frac{\theta ^{34}}{89579520000 \sqrt{2}}-\frac{7 \theta
          ^{36}}{89579520000000 \sqrt{2}}\right)>0
    \end{aligned}
  \]
  for $0<\theta<\sqrt{\frac{1167}{1871}}$. Note that $\sqrt{\frac{1167}{1871}}>\frac{7}{10}$. For
  $\frac{7}{10}\le\theta\le\frac{\pi}{2}$, we make use of interval arithmetic in Mathematica to show
  $v(\theta)>0$. The code is given below.
\begin{verbatim}
a = 7/10; b = Pi/2; n = 100; (*Number of subdivisions*) subintervals = Subdivide[a, b, n];
allIntervalsPositive = True; (*Check each subinterval*)
For[i = 1, i <= n, i++, 
    smallInt = Interval[{subintervals[[i]], subintervals[[i + 1]]}];
    vSmallInt = v[smallInt];
    If[Min[vSmallInt] <= 0,  allIntervalsPositive = False;  Break[];];];
If[allIntervalsPositive, Print["v(x) > 0 in [", a, ", ", b, "]"], 
    Print["Potential root found in subinterval: ", smallInt]]
\end{verbatim}
  
  In summary, $w_{-}'(\theta)>0$ on $\theta\in(0,\frac{\pi}{2})$. Therefore, in
  $\theta\in(0,\frac{\pi}{2})$, $\nu=w_{-}(\theta)$ has at most one root, and $\phi(\theta)$ has at
  most one critical point. Note that
  $\mathcal{P}(\theta=\arcsin\mu)=-2 \theta (\theta -\sin \theta)^2 \sin \theta (\theta +\sin
  \theta)^2 \cos \theta<0$ for $0<\mu<1$, and
  $\mathcal{P}(\theta=\frac{\pi}{2})=-\frac{1}{16} (\mu^2-1) \left((16-12 \pi ^2) \mu^2+\pi ^4+4 \pi
    ^2\right)>0$ for $0<\mu<1$. So $\phi(\theta)$ in $\theta_k <\theta<\frac{\pi}{2}$ is decreasing
  before the only critical point and then increasing after. That means $\phi(\theta)$ has no local
  maximum in $\theta_k<\theta<\frac{\pi}{2}$.
\end{proof}

\section{{Proof of the claim \eqref{eqphie}}}
\label{a:phie}

We fist copy the claim \eqref{eqphie} here:
\[
  \phi_e(\theta) = \frac{1}{\sin^2\theta-\mu^2}-\frac{1}{\theta^2-\mu^2}\text{ has no local maximum for }\theta\in(\theta_k,\frac{\pi}{2})\text{ and }0<\mu<1,
\]
where $\theta_k:=\arcsin\mu$. Then we give the proof as follows.

\begin{proof}
  We calculate
  \[
    \phi_e'(\theta)=\frac{2 \theta }{\left(\theta^2-\mu^2\right)^2}-\frac{2 \sin \theta \cos
      \theta}{\left(\sin^2\theta-\mu^2\right)^2}=:2\frac{\mathcal{P}(\theta)}{\left(\theta^2-\mu
      ^2\right)^2 \left(\mu^2-\sin^2\theta\right)^2}.
  \]
  Let $\nu=\mu^2$. Then a critical point $\theta_*$ of $\phi_e(\theta)$ is the root of
  \[
    \mathcal{P}(\theta)=-\theta^4 \sin \theta\cos \theta+\theta \sin^4\theta+ \left(2 \theta
    ^2\sin \theta\cos \theta-2\theta\sin^2\theta\right)\nu+ (\theta -\sin \theta\cos
    \theta)\nu^2=0
  \]
  By solving for $\nu$ from the above equation, we find $\theta_*$ must satisfy one of the following
  equations
  \[
    \nu=\frac{\theta \sin^2\theta-\theta^2 \sin \theta \cos \theta\pm\sqrt{\theta } \sqrt{\sin
        \theta\cos\theta} \left(\theta^2-\sin^2 \theta\right)}{\theta -\sin \theta \cos
      \theta}=:w_{\pm}(\theta).
  \]
  We calculate and use $x>\sin x>0$ for $0<x<\pi$ to find
  \[
    w_{+}(\theta)-\sin^2\theta= 2\frac{\theta^2- \sin^2\theta}{\theta -\sin \theta \cos \theta}
    \left(\sqrt{2\theta } \sqrt{\sin (2\theta)}-\sin (2 \theta )\right)>0.
  \]
  So the only equation that $\theta_*$ needs to satisfy is $\nu=w_{-}(\theta)$. We claim that
  $w_{-}'(\theta)>0$ and thus $w_{-}(\theta)$ is increasing for $0<\theta<\frac{\pi}{2}$. This will
  be shown for $0<\theta\le 1$ and $1\le \theta<\frac{\pi}{2}$ separately. We calculate
  \[
    \begin{aligned}
      &\left(2\sqrt{\theta}\sqrt{\sin\theta\cos\theta}(\sin\theta\cos\theta-\theta)^2\right)w_{-}'(\theta)\\
      =\,&2\theta^{3/2}\sin^2\theta\cos^2\theta \sqrt{\sin\theta\cos\theta}-2 \theta^{7/2} \cos
    ^2\theta \sqrt{\sin\theta\cos\theta}-2 \theta^{3/2}\sin^4\theta
      \sqrt{\sin \theta \cos \theta}+\\
      &2 \theta^{7/2} \sin^2\theta \sqrt{\sin \theta \cos \theta}+2 \theta^{5/2} \sin \theta \cos
      \theta \sqrt{\sin \theta \cos \theta}+\theta^4 \sin
    ^2\theta-\theta^4 \cos^2\theta-\theta^3 \sin \theta \cos^3\theta+\\
      &\theta^3 \sin^3\theta \cos \theta-3 \theta^3 \sin \theta \cos \theta-\theta^2 \sin
    ^4\theta+10 \theta^2 \sin
    ^2\theta \cos^2\theta-3 \theta \sin^3\theta \cos^3\theta-\sin^4\theta \cos^2\theta-\\
      &\theta \sin^5\theta \cos \theta-\theta \sin^3\theta \cos \theta-2 \sqrt{\theta } \sin
    ^3\theta \cos \theta \sqrt{\sin \theta \cos \theta}=:v(\theta).
    \end{aligned}    
  \]
  We note that $v(\theta)$ is a sum of positive terms and negative terms.

  For $0<\theta\le 1$, we first show that
  \begin{equation}\label{sqrtest}
    \sqrt{\theta }-\frac{\theta^{5/2}}{3}+\frac{\theta^{9/2}}{90}-\frac{\theta^{13/2}}{180}
    <\sqrt{\sin\theta\cos\theta}<\sqrt{\theta }-\frac{\theta^{5/2}}{3}+\frac{\theta^{9/2}}{90},
  \end{equation}
  or equivalently
  \[
    a(\theta):=2\theta-\frac{4\theta^3}{3}+\frac{4\theta^5}{15}-\frac{\theta^7}{27}+
    \frac{31\theta^9}{4050}-\frac{\theta^{11}}{4050}+\frac{\theta^{13}}{16200}< \sin(2\theta)<
    2\theta-\frac{4\theta^3}{3}+\frac{4\theta^5}{15}-\frac{2\theta^7}{135}+\frac{\theta^9}{4050}=:b(\theta).
  \]
  By Maclaurin series, we can show for $0<\theta\le1$ that
  \[
    c(\theta):=2\theta-\frac{4\theta^3}{3}+\frac{4\theta^5}{15}-\frac{8\theta^7}{315}+\frac{4\theta^9}{2835}-\frac{8\theta^{11}}{155925}<
    \sin(2\theta)<2\theta-\frac{4\theta^3}{3}+\frac{4\theta^5}{15}-\frac{8\theta^7}{315}+\frac{4\theta^9}{2835}=:d(\theta)
  \]
  We note for $0<\theta\le1$ that
  \[
    c(\theta)-a(\theta)=\left(\frac{11\theta^7}{945}-\frac{59\theta^9}{9450}\right)+\left(\frac{61\theta^{11}}{311850}-\frac{\theta^{13}}{16200}\right)>0,\quad
    b(\theta)-d(\theta)=\frac{2\theta^7}{189}-\frac{11\theta^9}{9450}>0.
  \]
  We substitute \eqref{sqrtest} and
  \[
    \theta-\frac{\theta^3}{6}+\frac{\theta^5}{120}-\frac{\theta^7}{5040}<\sin\theta<\theta-\frac{\theta^3}{6}+\frac{\theta^5}{120}
  \]
  \[
    1-\frac{\theta^2}{2}+\frac{\theta^4}{24}-\frac{\theta^6}{720}<\cos\theta<1-\frac{\theta
    ^2}{2}+\frac{\theta^4}{24}
  \]
  into $v(\theta)$ with upper bounds into negative terms and lower bounds into positive terms, and
  get
  \[
    \begin{aligned}
      v(\theta)>&\left(\frac{92 \theta^{10}}{315}-\frac{7619 \theta^{12}}{37800}\right)+
      \left(\frac{5207 \theta^{14}}{85050}-\frac{678781 \theta^{16}}{63504000}\right)+
      \left(\frac{267361 \theta^{18}}{228614400}-\frac{9557 \theta^{20}}{120960000}\right)+
      \left(\frac{11827 \theta^{22}}{4572288000}\right)+\\
      &\left(\frac{299807\theta^{24}}{5761082880000}-\frac{2240017\theta^{26}}{230443315200000}\right)+\left(\frac{78433\theta^{28}}{172832486400000}-\frac{57977\theta^{30}}{5530639564800000}\right)+\\
      &\left(\frac{73 \theta^{32}}{592568524800000}-\frac{\theta^{34}}{1185137049600000}\right)>0
      \quad\text{for }0<\theta\le1.
  \end{aligned}
  \]
  Therefore, $w_{-}'(\theta)>0$ for $0<\theta\le 1$.

  For $1\le\theta<\frac{\pi}{2}$, we make use of interval arithmetic in Mathematica. The code is
  similar to that in Appendix~\ref{phiproof}.  Running the code, we get the result
  \verb|v(x)>0 in [1,Pi/2]|. Hence, $w_{-}'(\theta)>0$ for $1\le \theta<\frac{\pi}{2}$.

  Now we have proved $w_{-}$ is increasing in $\theta\in(0,\frac{\pi}{2})$. So $\nu=w_{-}(\theta)$
  has at most one root in $\theta\in(0,\frac{\pi}{2})$. That is, $\phi_e(\theta)$ has at most one
  critical point in $\theta\in(0,\frac{\pi}{2})$. Note that
  \[
    \mathcal{P}(\theta=\arcsin\mu)=-(\theta -\sin \theta)^2 \sin \theta (\theta +\sin\theta)^2
    \cos \theta<0\text{ and }\mathcal{P}(\theta=\frac{\pi}{2})=\frac{\pi}{2}(\nu-1)^2>0.
  \]
  So $\phi_e'(\theta)$ is negative before the only critical point of $\phi_e$ and positive after for
  $\theta\in(\arcsin\mu,\frac{\pi}{2})$. The only critical point of $\phi_e$ is a local
  minimum. That shows $\phi_e$ for $\theta\in(\arcsin\mu,\frac{\pi}{2})$ has no local maximum.
\end{proof}

\end{document}